\documentclass[11pt]{amsart}%
\usepackage{amsfonts}
\usepackage{epsfig}
\usepackage{graphicx}  
\usepackage{amsmath}
\usepackage{amsfonts}
\usepackage{amssymb}
\usepackage{latexsym}
\usepackage{rotating}
\usepackage{pstricks, pst-node, pst-text, pst-3d}
\usepackage{amsbsy}
\usepackage{bm}
\usepackage{mathrsfs}
\usepackage{tikz}
\newcommand*\circled[1]{\tikz[baseline=(char.base)]{
            \node[shape=circle,draw,inner sep=2pt] (char) {#1};}}

\usepackage{lscape}

\setcounter{MaxMatrixCols}{30}
\input{amssym.def}
\setlength{\textwidth}{5.5in}
\newtheorem{theorem}{Theorem}
\newtheorem{proposition}{Proposition}
\newtheorem{corollary}{Corollary}
\newtheorem{lemma}{Lemma}
\newtheorem{remark}{Remark}

\newtheorem{definition}{Definition}
\theoremstyle{remark}


\newcommand{\spn}{\text{\rm span}\,}

\newcommand{\Cl}{\text{\rm Cl}}
\newcommand{\End}{\text{\rm End}}
\newcommand{\Id}{\text{\rm Id}}



\newcommand{\R}{\mathbb{R}}
\newcommand{\Q}{\mathbb{Q}}




\input epsf
\usepackage{geometry}
\geometry{hmargin=24mm, vmargin=25mm}
\date{\today}
\begin{document}
\title[Lattice on general $H$-type groups]{Existence of the lattice on general $H$-type groups}
\author[K.~Furutani, I.~Markina ]{Kenro Furutani, Irina Markina}

\thanks{The first author has been partially
supported by 
the Grant-in-aid for
Scientific Research (C) No. 23540251, 
{\it Japan Society for the Promotion of Science}.\\ 
\quad The second author has been partially supported by the grants of the 
Norwegian Research Council \#204726/V30. This work was mostly done when the second author was
visiting Tokyo University of Science (TUS) 
under a support by a grand from the TUS}
 
\subjclass[2010]{Primary 17B30, 22E25}

\keywords{Clifford module, nilpotent two step algebra, lattice, general $H$-type algebras}

\address{K.~Furutani:  Department of Mathematics, Faculty of Science 
and Technology, Science University of Tokyo, 2641 Yamazaki, Noda, Chiba (278-8510), Japan}
\email{furutani\_kenro@ma.noda.tus.ac.jp}

\address{I.~Markina: Department of Mathematics, University of Bergen, P.O.~Box 7803,
Bergen N-5020, Norway}
\email{irina.markina@uib.no}

\begin{abstract}
Let $\mathscr N$ be a two step nilpotent Lie algebra endowed with
non-degenerate scalar product 
$\langle\cdot\,,\cdot\rangle$ 
and let $\mathscr N=V\oplus_{\perp}Z$, 
where $Z$ is the center of the Lie algebra 
and $V$ its orthogonal complement with respect to the scalar product. 
We prove that if $(V,\langle\cdot\,,\cdot\rangle_V)$ is the Clifford
module for the Clifford algebra $\Cl(Z,\langle\cdot\,,\cdot\rangle_Z)$ 
such that the homomorphism 
$J\colon \Cl(Z,\langle\cdot\,,\cdot\rangle_Z)\to\End(V)$ 
is skew symmetric with respect to the scalar product 
$\langle\cdot\,,\cdot\rangle_V$, or in other words the Lie algebra 
$\mathscr N$ satisfies conditions of general $H$-type Lie algebras
~\cite{Ciatti, GKM}, then there is a basis with respect to which the 
structural constants of the Lie algebra $\mathscr N$ are all $\pm 1$ or $0$.
\end{abstract}

\maketitle


\section{Introduction and definitions}


We denote by $\langle\cdot\,,\cdot\rangle_{V}$ a real valued 
symmetric non-degenerate bi-linear
form defined on a real vector space $V$ and call it a scalar product. 
If the form is positive definite, we denote it by $(\cdot\,,\cdot)$
and call an inner product. We use 
the notation $[\cdot\,,\cdot]$ for commutator, 
or in other words for a skew symmetric bi-linear vector valued form. 
The $H$-type Lie algebras were
introduced by A.~Kaplan in~\cite{Kap} and were widely studied, see,
for instance~\cite{CCM,CDKR, ChMar,Kap2,Kor}.
In works~\cite{Ciatti,CP,GKM} the analogous of classical $H$-type Lie 
algebras were introduced and studied. 
\begin{definition}\label{def:general}
A 2-step nilpotent Lie algebra
$\mathscr N$ 
endowed with a scalar product $\langle\cdot\,,\cdot\rangle$ is called
a general $H$-type algebra, if 
\begin{itemize}
\item[1.]{$\mathscr N=V\oplus_{\perp}Z$, 
where $Z$ is the center of the Lie algebra $\mathscr N$, 
which is non-degenerate subspace of the scalar product 
vector space $(\mathscr N,\langle\cdot\,,\cdot\rangle)$, 
and $V$ its orthogonal complement $($we call it the horizontal space$)$,}
\item[2.]{the skew symmetric map $J\colon Z\to\End (V)$ defined by
\begin{equation}\label{eq:J}
\langle J_zu,v\rangle=\langle z, [u,v]\rangle
\end{equation}
satisfies the orthogonality condition
\begin{equation}\label{eq:J_orth}
\langle J_zu,J_zv\rangle=\langle z,z\rangle \langle u, v\rangle.
\end{equation}
}
\end{itemize}
\end{definition} 
Conditions~\ref{eq:J} and~\ref{eq:J_orth} imply
\begin{equation}\label{eq:J_Clif}
J^2_z=-\langle z,z\rangle \Id_{V}
\end{equation}
see, for example,~\cite{Ciatti, GKM,Lam}.

Due to~\eqref{eq:J_Clif} the horizontal space $V$ becomes a
$\Cl(Z,\langle\cdot\,,\cdot\rangle_Z)$-module, where
$\langle\cdot\,,\cdot\rangle_Z$ is the restriction of the scalar
product $\langle\cdot\,,\cdot\rangle$ onto the center~$Z$. So, from the
definition we see that any general $H$-type algebra $\mathscr N$
defines a $\Cl(Z,\langle\cdot\,,\cdot\rangle_Z)$-module $V$. Moreover,
the module $V$ is endowed with the scalar product
$\langle\cdot\,,\cdot\rangle_V$, obtained by the restriction of
$\langle\cdot\,,\cdot\rangle$ on $V$, such that the representations
$J_z$ are skew symmetric with respect to the scalar product
$\langle\cdot\,,\cdot\rangle_V$ for any $z\in Z$.
\smallskip

From the other side, if we assume that $V$ is a Clifford module 
for some Clifford algebra $\Cl(Z,\langle\cdot\,,\cdot\rangle_Z)$, 
and $V$ carried a scalar product $\langle\cdot\,,\cdot\rangle_V$ 
such that~\eqref{eq:J_orth} holds, then $J$ is skew symmetric 
with respect to $\langle\cdot\,,\cdot\rangle_V$:
\begin{equation}\label{eq:J_skew}
\langle J_zu,v\rangle_V = - \langle u, J_zv\rangle_V.
\end{equation}
{\it
Therefore, one can define the Lie bracket $[\cdot\,,\cdot]\colon
V\times V\to Z$ 
by using~\eqref{eq:J} with
$\langle\cdot\,,\cdot\rangle=\langle\cdot\,,\cdot\rangle_V+\langle\cdot\,,\cdot\rangle_Z$ 
and show that the Lie algebra $\mathscr N=(V\oplus_{\perp}
Z,[\cdot\,,\cdot],\langle\cdot\,,\cdot\rangle)$ 
is a general $H$-type algebra, see~\cite{Ciatti, CDKR, GKM, Kap2}.}

In general, among the conditions~\eqref{eq:J_orth},~\eqref{eq:J_Clif}, 
and~\eqref{eq:J_skew} any two of them imply the third one.

We say that a $\Cl(Z,\langle\cdot\,,\cdot\rangle_Z)$-module $V$ is an
{\it  admissible module}, if there is a scalar product 
$\langle \cdot\,,\cdot\rangle_V$ defined
on $V$ such that the representations $J_z\colon \Cl(Z,\langle\cdot\,,\cdot\rangle_Z)\to\End(V)$, satisfy 
the skew symmetry condition~\eqref{eq:J_skew} 
for any $z\in Z$. The scalar product $\langle \cdot\,,\cdot\rangle_V$ 
will be called an {\it admissible scalar product}.
The following is known about admissible modules. 
Let $V$ be a given $\Cl(Z,\langle\cdot\,,\cdot\rangle_Z)$-module 
and $\{z_1,\ldots, z_n\}$ an orthonormal basis of $Z$ with respect to 
the scalar product~$\langle\cdot\,,\cdot\rangle_{Z}$. Let us denote by 
$J_{z_j}\in \End(V)$ the representations of the generators 
$z_1,\ldots, z_n$ of the Clifford algebra $\Cl(Z,\langle\cdot\,,\cdot\rangle_Z)$. Then $J_{z_j}$ satisfy
$$
J_{z_j}^2=-\langle z_j, z_j\rangle_{Z}\Id_{V},\quad J_{z_i}J_{z_j}=-J_{z_j}J_{z_i}, \quad i,j=1,\ldots,n,\ \ i\neq j.
$$
It is always possible to find an 
inner product $(\cdot\,,\cdot)_V$ on $V$ such that, 
the representations $J_{z_j}$ satisfy the orthogonality 
condition~\eqref{eq:J_orth}, 
since the group generated by the
operators $\{J_{z_j}\}_{j=1}^{n}$ is a finite group in $\End(V)$ (see~\cite{Hus}). 
In the special case of Clifford algebra  $\Cl(Z,(\cdot\,,\cdot)_Z)$,
generated by an inner product space $(Z,(\cdot\,,\cdot)_Z)$
the chosen inner product $(\cdot\,,\cdot)_V$ 
on $\Cl(Z,(\cdot\,,\cdot)_Z)$-module $V$ satisfies the orthogonality
condition~\eqref{eq:J_orth} 
for any $J_z$, $z\in Z$ and not only for representations $J_{z_j}$ 
of generators $z_j$, $j=1,\ldots,n$, see~\cite{LawMich}. As a consequence, in this case 
we obtain the skew symmetry property~\eqref{eq:J_skew} 
and $\bigr(V,(\cdot\,,\cdot)_V\bigr)$ became an admissible module. 
The Lie algebra $\mathscr N=V\oplus_{\perp}Z$, where the Lie bracket
is defined in~\eqref{eq:J} by making use of skew symmetric maps $J_z$ and
the inner product on $\mathscr N$ is the sum of inner products on $V$
and $Z$, is the $H$-type algebra introduced by A.~Kaplan
in~\cite{Kap}. We call such an algebra a {\it classical} $H$-type Lie algebra.

Let $\Cl(Z,\langle\cdot\,,\cdot\rangle_Z)$ be a Clifford algebra
generated 
by an arbitrary scalar product~$\langle\cdot\,,\cdot\rangle_{Z}$. It
was shown in~\cite{Ciatti} that given a
$\Cl(Z,\langle\cdot\,,\cdot\rangle_Z)$-module $V$ there always exists
a scalar product $\langle\cdot\,,\cdot\rangle_V$ on $V$ (or on
$V\oplus V$), such that the map $J_z\colon Z\to\End(V)$ \big(or a
modified map $\widetilde J_z\colon Z\to \End(V\oplus V)$\big)
satisfies~\eqref{eq:J_orth}, or equivalently~\eqref{eq:J_skew}, for an
arbitrary $z\in Z$. As a consequence, we obtain that for any Clifford
algebra $\Cl(Z,\langle\cdot\,,\cdot\rangle_Z)$ there exists an
admissible module $(V,\langle\cdot\,,\cdot\rangle_V)$ (or $(V\oplus
V,\langle\cdot\,,\cdot\rangle_{V\oplus V})$). Moreover, the admissible
module $(V,\langle\cdot\,,\cdot\rangle_V)$ (or $(V\oplus
V,\langle\cdot\,,\cdot\rangle_{V\oplus V})$) will be necessarily a neutral space, that is the dimensions of
the maximal positive and negative definite subspace of $V$ (or $V\oplus V$) coincide, see
Proposition~\ref{prop:1} and~\cite{Ciatti}. The corresponding 2-step
nilpotent 
Lie algebra satisfies Definition~\ref{def:general} and is called general $H$-type algebra.

Now, let $N$ be a simply connected, nilpotent Lie group and $\Gamma$ its
discrete subgroup such that the quotient space $\Gamma\backslash N$ is
compact. Then the group $\Gamma$ is called {\it lattice} and quotient
$\Gamma\backslash N$ is called a {\it compact nilmanifold}, see, for
instance,~\cite{Eberline1}. Nilmanifolds, as a generalization of
higher dimensional tori, play important role in study
of the sub-Riemannian geometry, the Riemannian geometry with
singularities, hypoelliptic operators, and spectral properties of
differential operators of the Grushin type, see for example~
\cite{BF-2,BFI-1,BFI-2,BFI-3,GW,II}. 
Also see~\cite{BF-1,CL}
for such study of the compact sub-Riemannian manifolds
coming from simple Lie groups.
According to the Mal'cev criterium~\cite{Malcev} 
a nilpotent Lie group $N$ 
admits a lattice if and only if the corresponding Lie algebra 
of $N$ has a basis with rational structure constants. 
Not all, even 2-step, nilpotent Lie algebras admit such a basis. 
In the work~\cite{CrDod} it was shown that classical 
$H$-type Lie algebras $\mathscr N$ have integer structure constants,
or more precisely, 
there is a basis
$\{v_1,\ldots,v_m\}$ of $V$ and a basis $\{z_1,\ldots,z_n\}$ of $Z$, such 
that $[v_\alpha,v_{\beta}]=\sum_{k=1}^{n}A_{\alpha\beta}^kz_k$, 
where the numbers $A_{\alpha\beta}^k$ equal $0,1$, or $-1$. 
So it will be natural to ask whether general H-type algebras have such
a basis too.

In the present work we show the following statement.
\begin{theorem}\label{th:integral_structure}
Let $\big(\mathscr
N=V\oplus_{\bot}Z,[\cdot\,,\cdot]\langle\cdot\,,\cdot\rangle\big)$ 
be a general $H$-type algebra. 
Then there is an orthonormal basis $\{v_1,\ldots,v_m,z_1,\ldots,z_n\}$ of
$V\oplus_{\bot}Z$ 
such that $[v_\alpha,v_{\beta}]=\sum_{k=1}^{n}A_{\alpha\beta}^kz_k$, 
where the coefficients $A_{\alpha\beta}^k$ are equal to $\pm 1$, or $0$.
\end{theorem}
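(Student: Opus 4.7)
The plan is to construct the required basis by letting the finite group generated by the operators $J_{z_i}$ act on $V$ and by choosing orbit representatives carefully. Fix an orthonormal basis $\{z_1,\ldots,z_n\}$ of $Z$ with $\langle z_i,z_i\rangle_Z=\epsilon_i\in\{\pm 1\}$ and set $J_i=J_{z_i}$. For each subset $I=\{i_1<\cdots<i_r\}\subseteq\{1,\ldots,n\}$ let $J_I=J_{i_1}\cdots J_{i_r}$. Since the $J_i$ anticommute and satisfy $J_i^2=-\epsilon_i\Id_V$, the elements $\{\pm J_I\}$ form a finite subgroup $G\subset\End(V)$ of order at most $2^{n+1}$. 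Iterating \eqref{eq:J_orth} yields $\langle J_Iv,J_Iv\rangle_V=\bigl(\prod_{i\in I}\epsilon_i\bigr)\langle v,v\rangle_V$, so any seed $v$ with $\langle v,v\rangle_V=\pm 1$ produces an orbit $\{J_Iv\}_I$ of vectors of squared length $\pm 1$.

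Suppose such a seed $v$ can be found whose orbit vectors are mutually $\langle\cdot,\cdot\rangle_V$-orthogonal. Then a maximal linearly independent subfamily of the orbit gives an orthonormal basis $\{v_1,\ldots,v_{m'}\}$ of the Clifford submodule $W=\spn\{J_Iv\}$, with $v_\alpha=J_{I_\alpha}v$. Because $J_kv_\alpha=\pm J_{I''}v$ equals some other basis vector up to sign, each inner product $\langle J_kv_\alpha,v_\beta\rangle_V$ is $0$ or $\pm 1$, and the identity $[v_\alpha,v_\beta]=\sum_k\epsilon_k\langle J_kv_\alpha,v_\beta\rangle_V\,z_k$ then shows that the structure constants on $W$ lie in $\{0,\pm 1\}$. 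The orthogonal complement $W^\perp\subseteq V$ is invariant under each $J_i$ by skew-symmetry, hence a Clifford submodule, and is non-degenerate with an admissible scalar product inherited from $V$. Iterating the construction on $W^\perp$ and concatenating bases produces the orthonormal basis of $V$ required by the theorem.

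The crux is therefore to produce the seed $v$. A direct computation using skew-symmetry of the $J_i$ and the relation $(J_K)^*=(-1)^{|K|(|K|+1)/2}J_K$ gives $\langle J_Iv,J_{I'}v\rangle_V=\pm\langle v,J_Kv\rangle_V$ with $K=I\triangle I'$. When $|K|\equiv 1,2\pmod 4$ the operator $J_K$ is $\langle\cdot,\cdot\rangle_V$-skew-symmetric and the inner product automatically vanishes, regardless of $v$. The obstacle is to arrange $\langle v,J_Kv\rangle_V=0$ simultaneously for every non-empty $K$ with $|K|\equiv 0,3\pmod 4$, that is, for every non-trivial symmetric element of $G$.

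I expect this simultaneous vanishing to be the main difficulty. For a single symmetric $J_K$ one has $J_K^2=\pm\Id_V$: if $J_K^2=-\Id_V$ then $J_K$ is a skew-orthogonal complex structure on $V$ and $\langle v,J_Kv\rangle_V$ vanishes automatically by skew-symmetry of the associated bilinear form; if $J_K^2=+\Id_V$ then the $\pm 1$-eigenspaces of $J_K$ are $\langle\cdot,\cdot\rangle_V$-orthogonal and non-degenerate, and since $V$ is neutral by Proposition~\ref{prop:1} they admit a splitting into subspaces of equal signature, so a balanced vector $v$ straddling the two eigenspaces gives $\langle v,J_Kv\rangle_V=0$. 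My plan is to impose these conditions simultaneously across a maximal commuting family of symmetric $J_K$'s by choosing $v$ in their joint eigenspace decomposition, and to handle non-commuting symmetric operators by iterating the construction on $W^\perp$ so that each pass removes one more constraint.
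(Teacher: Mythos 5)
Your reduction follows the same framework as the paper: find a spacelike seed $v$ whose orbit $\{J_I v\}$ under products of generators is pairwise orthogonal, span a submodule $W$, recurse on $W^{\perp}$; and you correctly reduce orbit orthogonality to the single-vector conditions $\langle v,J_Kv\rangle_V=0$ for the \emph{symmetric} products $J_K$ (those with $|K|\equiv 0,3\pmod 4$), the skew-symmetric ones being automatic. The gap is in how you dispose of the symmetric case with $J_K^2=-\Id_V$. You assert that $\langle v,J_Kv\rangle_V$ vanishes automatically ``by skew-symmetry of the associated bilinear form,'' but since $J_K$ is \emph{symmetric}, the form $(u,w)\mapsto\langle u,J_Kw\rangle_V$ is symmetric too, and $\langle v,J_Kv\rangle_V$ is generically nonzero. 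This is exactly the situation the paper's Lemma~\ref{orthogonalization} is built to fix: for $\Omega$ symmetric with $\Omega^2=-\Id_V$, the quantity $a=\langle w,\Omega w\rangle_V$ need not vanish, and the lemma replaces $w$ by $w+\lambda\Omega w$ for a suitable $\lambda$ solving a quadratic. The paper invokes this already at $\Cl_{0,3}$ with $\Omega=J_1J_2J_3$ (explicitly noting that ``the scalar product $\langle w,J_1J_2J_3 w\rangle_V$ does not vanish'' in general), and it uses the multi-operator version Lemma~\ref{orthogonal} for most signatures with $r+s\ge 5$. Without this perturbation step your construction does not produce an orthogonal orbit.

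Your fallback --- iterating on $W^{\perp}$ so that ``each pass removes one more constraint'' --- also does not go through as stated. The orbit $\{J_Iv\}$ is orthogonal only if \emph{all} the conditions $\langle v,J_Kv\rangle_V=0$ hold simultaneously for that single seed $v$; a $v$ satisfying only some of them does not span an admissible integral submodule on which you could later continue. The paper's mechanism is different: for each $(r,s)$ it selects a commuting family of isometric involutions $P_i$ and complementary (anti-)isometries so that, by Lemmas~\ref{lem:PT1},~\ref{lem:PT2}, and~\ref{lemma:04}, the common $+1$-eigenspace contains a unit spacelike vector $w$, and so that the residual symmetric anti-involutions $\Omega_j$ encountered on that eigenspace are few and mutually anti-commuting; then Lemma~\ref{orthogonal} together with Corollary~\ref{cor:orthogonal} (which keeps the perturbed vector inside the chosen eigenspace) handles them simultaneously. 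That such a choice of $P_i$ and $\Omega_j$ exists is precisely what the case-by-case constructions of Sections~\ref{sec:0n}--\ref{sec:g9} and the periodicity theorems of Section~\ref{sec:tensor} establish; your proposal leaves that entire verification, which is the bulk of the proof, unaddressed.
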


Denote by $\langle\cdot\,,\cdot\rangle_V$ and
$\langle\cdot\,,\cdot\rangle_Z$ 
the restrictions of the scalar product to the subspaces $V$ and $Z$, and assume that the scalar product spaces $(V,\langle\cdot\,,\cdot\rangle_V)$ and $(Z,\langle\cdot\,,\cdot\rangle_Z)$ are nondegenerate.
Let 
$$
\nu_{\alpha}^V= \langle v_\alpha,v_\alpha\rangle_V,\ \
\alpha=1,\ldots,m,\qquad  \nu_k^{Z}
=\langle z_k,z_k\rangle_Z,\ \ k=1,\ldots,n
$$ 
be corresponding indices. Let $J_{z_k}\colon
\Cl(Z,\langle\cdot\,,\cdot\rangle_Z)\to \End(V)$ 
be representations of orthonormal generators $z_1,\ldots,z_n$ of $\Cl(Z,\langle\cdot\,,\cdot\rangle_Z)$. We write 
\begin{equation}\label{eq:str_con}
[v_\alpha,v_{\beta}]=\sum_{k=1}^{n}A_{\alpha\beta}^kz_k\quad\text{and}\quad J_{z_k}v_{\alpha}=\sum_{\beta=1}^{m}B_{\alpha\beta}^{k}v_{\beta}.
\end{equation}
Then, as a consequence of~\eqref{eq:J} and~\eqref{eq:str_con}, we obtain
\begin{equation}\label{eq:AB}
\langle J_{z_k}v_\alpha,v_\beta\rangle_V=\langle
z_k,[v_\alpha,v_\beta]\rangle_Z
\quad\Longrightarrow\quad\nu_{\beta}^VB^k_{\alpha\beta}=\nu_k^ZA^k_{\alpha\beta}.
\end{equation}

Therefore the result of Theorem~\ref{th:integral_structure} can be reformulated as follows.

\begin{theorem}\label{th:main}
Given a scalar product space $(Z,\langle\cdot\,,\cdot\rangle_Z)$ with
an orthonormal basis 
$z_1,\ldots,z_n$ there is an admissible Clifford
$\Cl(Z,\langle\cdot\,,\cdot\rangle_Z)$-module 
$(V,\langle\cdot\,,\cdot\rangle_V)$ of minimal dimension with representations
\[
J\colon \Cl(Z,\langle\cdot\,,\cdot\rangle_Z)\to\End(V) 
\]
and an orthonormal basis $v_1,\ldots,v_m$ on $(V,\langle\cdot\,,\cdot\rangle_V)$, 
such that $J_z$ satisfies~\eqref{eq:J_orth},~\eqref{eq:J_Clif} and moreover,
\begin{equation}\label{eq:integral}
\langle J_{z_k} v_{\alpha},v_{\beta}\rangle_V=\pm 1,\text{ or
 }0,\quad\text{for all}
\quad k=1,\ldots,n,\ \ \alpha,\beta=1,\ldots,m.
\end{equation}
\end{theorem}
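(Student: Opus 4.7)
The strategy is to exhibit, in every signature $(p,q)$ of $(Z,\langle\cdot\,,\cdot\rangle_Z)$, an orthonormal basis of a minimal admissible $\Cl(Z,\langle\cdot\,,\cdot\rangle_Z)$-module on which each generator $J_{z_k}$ acts as a \emph{signed permutation} of the basis vectors. By the identity~\eqref{eq:AB}, this immediately forces $A^k_{\alpha\beta}\in\{-1,0,+1\}$ and hence yields~\eqref{eq:integral}.

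The construction generalises the seed-and-orbit method used for classical $H$-type algebras in~\cite{CrDod}. Set $\epsilon_k=\langle z_k,z_k\rangle_Z\in\{\pm 1\}$. In an admissible module $(V,\langle\cdot\,,\cdot\rangle_V)$ I would choose a seed $v_0\in V$ with $\langle v_0,v_0\rangle_V=\pm1$ and, for each subset $S=\{k_1<k_2<\cdots<k_\ell\}\subseteq\{1,\ldots,n\}$, define
\[
v_S\,=\,J_{z_{k_1}}J_{z_{k_2}}\cdots J_{z_{k_\ell}}\,v_0.
\]
Combining $J_{z_i}^2=-\epsilon_i\,\Id_V$, the anti-commutations $J_{z_i}J_{z_j}=-J_{z_j}J_{z_i}$ for $i\neq j$, the isometry property~\eqref{eq:J_orth} and the skew symmetry~\eqref{eq:J_skew}, a short induction should give
\[
\langle v_S,v_S\rangle_V\in\{-1,+1\},\qquad \langle v_S,v_T\rangle_V=0\ \text{for}\ S\neq T,\qquad J_{z_k}\,v_S=\pm\,v_{S\triangle\{k\}},
\]
so that each $J_{z_k}$ acts on the family $\{v_S\}$ as a signed permutation.

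To obtain a \emph{minimal} admissible module, one invokes the classification of real Clifford algebras: the volume element $\omega=J_{z_1}\cdots J_{z_n}$ is central or skew-central according to the parity of $n$, and on an irreducible summand on which $\omega=\pm\Id$ it enforces a relation $v_S=\pm v_{\{1,\ldots,n\}\setminus S}$, halving the family $\{v_S\}$ to the minimal dimension. Choosing an index set $\mathcal{S}$ of representatives of the resulting equivalence classes yields the desired orthonormal basis. In the signatures where, following the discussion recalled from~\cite{Ciatti} and~\cite{GKM}, an admissible scalar product exists only on the double $V\oplus V$, the same procedure is carried out on each copy, and the extra swap operator implementing the doubling is itself a signed permutation exchanging the two copies; minimality again follows from the representation-theoretic dimension count.

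The main obstacle I expect lies in this signature-dependent bookkeeping. Exactly which index set $\mathcal{S}$ to retain, whether the module must be doubled, and whether a seed $v_0$ can be chosen so that every $v_S$ has $\langle v_S,v_S\rangle_V\in\{\pm 1\}$ simultaneously, all depend on $(p,q)$ through the structure of real Clifford modules (Bott periodicity modulo~$8$, with finer behaviour modulo~$4$ governing the volume element). Treating each class uniformly, while preserving the signed-permutation property of every $J_{z_k}$ throughout, is the technical heart of the argument; once it is in place,~\eqref{eq:integral} drops out immediately from~\eqref{eq:AB}.
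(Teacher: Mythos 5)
Your global strategy — a signed-permutation orbit of a seed vector $v_0$ under the $J_{z_k}$'s — is indeed the shape of the paper's argument, but the claim that ``a short induction should give'' the orthogonality $\langle v_S,v_T\rangle_V=0$ for $S\neq T$ is false, and this is not bookkeeping: it is the technical heart of the whole construction. Skew symmetry and anti-commutation only give $\langle J_{z_{i_1}}\cdots J_{z_{i_m}}v_0,v_0\rangle_V=0$ when the operator $J_{z_{i_1}}\cdots J_{z_{i_m}}$ is anti-symmetric, which happens exactly when $m\equiv 1,2\pmod 4$. When $|S\triangle T|\equiv 0,3\pmod 4$ the operator is symmetric and $\langle v_S,v_T\rangle_V$ need not vanish for an arbitrary seed. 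The paper flags this explicitly (``In general, the scalar product $\langle w, J_1J_2J_3w\rangle_V$ does not vanish'') and devotes Lemmas~\ref{orthogonalization} and~\ref{orthogonal} to perturbing the seed so that finitely many such inner products vanish, and even this normalization only works when the offending operators mutually anti-commute. Where they don't, the paper switches to a systematic eigenspace decomposition by mutually commuting isometric \emph{involutions} of the form $J_{i_1}J_{i_2}J_{i_3}J_{i_4}$ and their complementary (anti-)isometries (Lemmas~\ref{lem:PT1}--\ref{lem:PT2}, Corollary~\ref{cor:orthogonal}), which both removes the linear dependencies and guarantees a spacelike common eigenvector exists.

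Two related points. First, the volume element alone does not halve $\{v_S\}$ down to the minimal admissible dimension: already for $\Cl_{0,6}$ the minimal module has dimension $16=2^6/4$ and for $\Cl_{0,8}$ it has dimension $16=2^8/16$, so one needs two and four commuting four-fold involutions respectively, not just $\omega$; your picture would overcount by large factors. Second, the fact that the eigenspaces of these involutions are non-degenerate, neutral, and admit a spacelike eigenvector is not automatic in indefinite signature -- the paper needs the complementary operators and, in some cases, a sign flip on the involution (Lemma~\ref{lemma:04}) to ensure it. Without these ingredients your proposal does not close; with them it essentially becomes the paper's proof, which then finishes via the isomorphism $\Cl_{r,s+1}\cong\Cl_{s,r+1}$ (Theorem~\ref{prop:7}), the $\Cl_{r,0}\to\Cl_{r,1}$ step (Theorem~\ref{prop:r0tor1}), and twisted tensor products with $\Cl_{1,1}$, $\Cl_{4,4}$, $\Cl_{0,8}$, $\Cl_{8,0}$ for periodicity.
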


In the following we always use the identification
$\Cl_{r,s}\cong\Cl(Z,\langle\cdot\,,\cdot\rangle_Z)$, arising from the isomorphism
$(Z,\langle\cdot\,,\cdot\rangle_Z)\cong\mathbb \R^{r,s}$. Here
$\R^{r,s}$ is the space $\R^{r+s}$ with the quadratic
form  
$\Q_{r,z}(x) = x_1^2+\ldots+x_r^2-x_{r+1}^2-\ldots-x_{r+s}^2$.

We call $\Cl_{r,s}$-modules, satisfying Theorem~\ref{th:main}
{\it admissible integral modules} and the corresponding orthonormal basis $\{v_{\alpha}\}$ {\it integral basis}. The existence of admissible integral
$\Cl_{r,0}$-modules was shown in~\cite{CrDod} and here we reconstruct $\Cl_{r,0}$-modules with an integral basis by making use of a different method, see also an observation in Section~\ref{fremark}. 
The admissible integral $\Cl_{r,0}$-modules lead to the presents of a lattice on classical $H$-type groups. 
Notice also, that in the work~\cite{Eberline1}, the existence of a rational structure constants on
classical $H$-type algebras was shown by realizing its Lie algebra 
as a direct sum of the space $\mathbb R^m$ and the center $Z$, given as a
{\it Lie triple  system} 
embedded in a subspace of $\mathfrak{so}(m)$.

In the present work we construct explicitly an orthonormal basis of 
any minimal dimensional admissible $\Cl_{r,s}$-module with respect to
which the structure constants $A_{\alpha\beta}^k$ defined 
in~\eqref{eq:str_con}, or equivalently in~\eqref{eq:AB}, equal to $\pm1$ or $0$.

There are several methods of construction of such an integral basis. To be able to use the Bott $8$-periodicity the  number of necessary admissible integral modules is 64. We use the isomorphism $\Cl_{r,s+1}\cong\Cl_{s,r+1}$ that preserves the integral basis, we take the tensor products with $\Cl_{4,4}$-module or with $\Cl_{1,1}$-module, the construction of $\Cl_{r,1}$-module from $\Cl_{r,0}$-module and reduce the number of required modules to $28$. To explain the main idea for the construction of the integral basis in the remaining cases we recall the terminology. A vector $v\in V$ is called 
\begin{itemize}
\item{ \it spacelike} if $\langle v,v\rangle_V>0$ or $v=0$;  
\item{\it timelike} if $\langle v,v\rangle_V<0$;
\item{\it null} if $\langle v,v\rangle_V=0.$
\end{itemize}
A linear map $P\colon V\to V$ is called {\it involution} if $P^2=\Id_V$ and {\it anti-involution} if $P^2=-\Id_V$. We say that a bijective linear map $T\colon V\to V$ is an {\it isometry} if 
$$
\langle Tv,Tv\rangle_V=\langle v,v\rangle_V\quad\text{for all vectors}\quad v\in V.
$$
and it is an {\it anti-isometry} if $\langle Tv,Tv\rangle_V=-\langle v,v\rangle_V
$ for all vectors.
The principal method for the construction of integral bases
starts by picking up a maximal number of mutually commuting isometric involutions
together with ``complementary'' isometries or anti-isometries satisfying some
commutation relations with the original involutions.  These choice of involutions and complementary operators give an orthogonal
decomposition of the representation space for the Clifford algebra. Choosing a common spacelike eigenvector of the original involutions we construct an integral basis by means of action on it of representations of the orthonormal generators for the corresponding Clifford algebra. There are several differences in the construction 
of orthogonal decompositions of the representation spaces by those involutions and complementary operators.
The purpose of the present work is, 
not only to show the existence of an integral structure for all $\Cl_{r,s}$-modules,
but also to present several possible methods for such kind of constructions,
especially for the cases of low dimensions.

The structure of this paper is as follows: 
Section~\ref{sec:aux} is an auxiliary section where we 
collected the information about properties of admissible modules and auxiliary technical lemmas. 
In Section~\ref{sec:00n} we prove that the isomorphism between Clifford algebras $\Cl_{r,s+1}$ and $\Cl_{s,r+1}$
preserves the admissible integral modules. This isomorphism reduces significantly the
number of the Clifford modules where we need to construct integral basis before we are able to apply the Bott periodicity.
In Section~\ref{sec:0n}
we show the existence of admissible integral $\Cl_{0,s}$-modules 
of minimal dimensions for $s\leq 8$. We set apart this section to emphasise the difference between the admissible modules for Clifford algebras $\Cl_{r,0}$ and $\Cl_{0,s}$.
In Section~\ref{sec:r0 and r1} we construct a minimal admissible module of $\Cl_{r,1}$ with
an integral basis basing on the existence of integral basis 
for the algebra $\Cl_{r,0}$. Section~\ref{sec:rs} devoted to the construction of integral structures on admissible $\Cl_{r,s}$-modules for $0\leq r,s\leq 8$ with $r+s\leq 8$. In the section we actively develop a method of the simultaneous orthogonal decomposition of eigenspaces for a collection of mutually commuting isometric involutions. We also exploit results of Sections~\ref{sec:00n} and~\ref{sec:r0 and r1}. In Section~\ref{sec:tensor} we prove some theorems that allow to use the Bott periodicity of Clifford algebras for construction of integral structures. We also construct admissible modules with integral basis for $\Cl_{r+1,s+1}$ and $\Cl_{0,n+2}$ 
based on the admissible module of  $\Cl_{1,1}$ and $\Cl_{2,0}$.
This method shows that the tensor product representation with 
some modification gives us an admissible module with integral basis,
but it need not be minimal. It remains to decompose this
admissible module into minimal one's together with an integral basis. Section~\ref{sec:g9} deals with integral structures on admissible $\Cl_{r,s}$-modules for $r+s\geq 9$. In the last Section~\ref{fremark} we make some observations about the presented constructions. Appendix contains the table of Clifford algebras, where circled Clifford algebras have the admissible modules of double dimension compare with the irreducible modules. It is also easy to see the symmetry with respect to the axis $r-s=-1$  that allows to use the construction of Section~\ref{sec:00n}.

 
 \section{Properties of admissible $\Cl_{r,s}$-modules}\label{sec:aux}
 

We recall the basic properties of admissible $\Cl_{r,s}$-modules
when $s>0$, see also~\cite{Ciatti}. We say that $W$ is an admissible sub-module of an admissible module $(V,\langle\cdot\,,\cdot\rangle_V)$ if $W$ is a Clifford sub-module of $V$ and the restriction of $\langle\cdot\,,\cdot\rangle_V$ on $W$, denoted by $\langle\cdot\,,\cdot\rangle_W$, is an admissible scalar product. There are decompositions of a given admissible
$\Cl_{r,s}$-module, $s>0$, into
non-admissible sub-modules, see Remarks~\ref{rem:2} and~\ref{rem:4} for examples. In the following proposition we give conditions that ensure a decomposition of an admissible Clifford module into admissible sub-modules.

\begin{proposition}\label{prop:1}
Let $(V,\langle\cdot\,,\cdot\rangle_V)$ be an admissible
$\Cl_{r,s}$-module with $s>0$ and $J_{z_k}$, $k=1,\ldots,r+s$,
representations of the 
orthonormal generators $z_1,\ldots,z_{r+s}$ of the Clifford algebra $\Cl_{r,s}$. 

$(1)$ Then the scalar product space $(V,\langle\cdot\,,\cdot\rangle_V)$ is neutral, i.~e. the maximal dimension of subspaces where the restriction of $\langle\cdot\,,\cdot\rangle_V$ is positive or negative definite coincide and, particularly, the
dimension of $V$ can be only even.

$(2)$ If $W$ is an admissible sub-module of a Clifford $\Cl_{r,s}$-module, then $W^{\perp}$
is also an admissible
sub-module. 
Hence, we have the decomposition of an admissible
$\Cl_{r,s}$-module $(V,\langle\cdot\,,\cdot\rangle_V)$
into admissible sub-modules.
\end{proposition}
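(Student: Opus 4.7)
The proof breaks naturally into the two parts, and the first is the one that carries all the content.

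For part (1), my plan is to exploit a timelike generator to produce an involution that decomposes $V$ into totally isotropic halves. Since $s>0$, I can pick a generator $z_k$ with $\langle z_k,z_k\rangle_Z=-1$. By~\eqref{eq:J_Clif} this forces $J_{z_k}^2=\Id_V$, so $J_{z_k}$ is an involution, and by~\eqref{eq:J_orth} it satisfies $\langle J_{z_k}u,J_{z_k}v\rangle_V=-\langle u,v\rangle_V$, i.e.\ it is an anti-isometry. I would then split $V=V_+\oplus V_-$ into the $(\pm1)$-eigenspaces of $J_{z_k}$. For any two vectors $u,v$ lying in the same eigenspace $V_{\varepsilon}$, the anti-isometry property yields
\[
\langle u,v\rangle_V=\varepsilon^2\langle J_{z_k}u,J_{z_k}v\rangle_V=-\langle u,v\rangle_V,
\]
so each $V_{\varepsilon}$ is totally isotropic. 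Since $\langle\cdot\,,\cdot\rangle_V$ is nondegenerate on $V$, the induced pairing $V_+\times V_-\to\R$ must be nondegenerate; hence $\dim V_+=\dim V_-$. Thus $\dim V$ is even and, by choosing a pair of dual bases of $V_\pm$ and forming sums/differences, one exhibits equal numbers of positive- and negative-norm vectors, so the signature is neutral.

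For part (2), the plan is two short observations. First, $W^{\perp}$ is $J_z$-invariant for every $z\in Z$: if $u\in W^{\perp}$ and $w\in W$, then skew symmetry~\eqref{eq:J_skew} of $J_z$ together with the fact that $W$ is a sub-module gives
\[
\langle J_zu,w\rangle_V=-\langle u,J_zw\rangle_V=0,
\]
because $J_zw\in W$; since this holds for all generators $z$, and $W^{\perp}$ is a vector subspace, it is a Clifford sub-module. Second, $W^{\perp}$ is nondegenerate: since $W$ is a nondegenerate subspace of the nondegenerate space $(V,\langle\cdot\,,\cdot\rangle_V)$, the standard decomposition $V=W\oplus W^{\perp}$ holds with the restriction of the scalar product to $W^{\perp}$ nondegenerate. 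Together these two observations say exactly that $W^{\perp}$ is an admissible sub-module, and iteration yields the desired orthogonal decomposition of $V$ into admissible sub-modules.

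The only mild subtlety, and what I would treat most carefully, is the step in part (1) that shows $\dim V_+=\dim V_-$; one must avoid the temptation to deduce the signature from skew symmetry alone and instead use the nondegeneracy of $\langle\cdot\,,\cdot\rangle_V$ to force the pairing between the two isotropic eigenspaces to be perfect. Everything else is a direct application of the defining identities~\eqref{eq:J_orth},~\eqref{eq:J_Clif},~\eqref{eq:J_skew} and elementary properties of nondegenerate bilinear forms.
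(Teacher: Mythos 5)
Your proof is correct, and in part (1) you take a genuinely different route from the paper. The paper never uses the eigenspaces of $J_{z_k}$: instead it picks a non-null $v$, observes that $W=\spn\{v,J_{z_{r+1}}v\}$ is a $2$-dimensional nondegenerate neutral subspace (using skew symmetry for orthogonality and $\langle J_{z_{r+1}}v,J_{z_{r+1}}v\rangle_V=-\langle v,v\rangle_V$ for the signature), and then passes to $W^\perp$ and iterates, peeling off one neutral plane at a time. Your argument instead performs a single global decomposition $V=V_+\oplus V_-$ into the totally isotropic $(\pm1)$-eigenspaces of an anti-isometric involution $J_{z_k}$, and reads off the signature from the perfect pairing $V_+\times V_-\to\R$. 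Both are sound. Your version delivers neutrality in one step without induction and makes the underlying hyperbolic structure explicit; the paper's inductive version has the virtue of producing, along the way, an explicit orthogonal decomposition of $V$ into $2$-dimensional neutral building blocks, which is in the spirit of the basis constructions that follow. One small caveat on your side: when you "choose a pair of dual bases of $V_\pm$ and form sums/differences," you should note that the resulting $2m$ vectors $e_i\pm f_i$ are mutually orthogonal (a quick check from total isotropy and $\langle e_i,f_j\rangle_V=\delta_{ij}$), so they really do exhibit the $(m,m)$ signature; as written this is implied but unstated. Your part (2) coincides with the paper's argument.
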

\begin{proof} Since the module $(V,\langle\cdot\,,\cdot\rangle_V)$ is admissible we have 
\[
\langle J_{z_k}v,v\rangle_V=0\quad\text{and}\quad \langle
J_{z_k}v,J_{z_k}v\rangle_V
= \langle z_k,z_k\rangle_{\mathbb R^{r,s}}\langle v,v\rangle_V, \quad\text{for all}\quad k.
\]
Particularly, $\langle J_{z_{k}}v,J_{z_{k}}v\rangle_V=
-\langle v,v\rangle_V$, 
where $k=r+1,\ldots,r+s$. Let $v\in V$ be such that $\langle v,v\rangle_V\not=0$ and $W=\spn\{v,J_{z_{r+1}}v\}$. 
The scalar product restricted to the subspace $W$ is non-degenerate and neutral. The scalar
product restricted to the orthogonal
complement to $W$ is also non-degenerate. Applying the same arguments to $W^{\perp}$, we decompose it into two dimensional neutral subspace and the orthogonal complement, that shows the first statement. 

If $W$ is an admissible sub-module of a Clifford $\Cl_{r,s}$-module, then the action of
each $J_{z_k}$ leaves invariant 
the orthogonal complement 
$W^{\perp}=\{v\in V\mid\  \langle w,v\rangle_V=0~\text{for any}~w\in
W\}$. 
Indeed, if $\widetilde w\in W^{\perp}$ and $w\in W$, then 
$$
\langle J_{z_k}\widetilde w,w\rangle_V=-\langle \widetilde w,\underbrace{J_{z_k}w}_{\in W}\rangle_V=0.
$$
Since the scalar product restricted to $W^{\perp}$ is 
non-degenerate,
$W^{\perp}$
is also an admissible
sub-module. 
\end{proof}

Further we show the existence of non-trivial scalar product 
satisfying two conditions (consequently all three) 
among~\eqref{eq:J_orth},~\eqref{eq:J_Clif}, and~\eqref{eq:J_skew}
for 
Clifford
$\Cl_{r,s}$-modules with $s>0$.

\begin{lemma}\label{prop:2}
Let $V$ be an irreducible module of a Clifford algebra $\Cl_{r,s}:$
\[
J\colon\Cl_{r,s}\to\End(V),\qquad
J_z^2=-\langle z,z\rangle_{\mathbb{R}^{r,s}}\Id_V,\quad z\in \mathbb{R}^{r,s},
\]
with a symmetric bilinear form 
$
\langle\cdot\,,\cdot\rangle_V\colon V\times V \to \mathbb{R}
$
which satisfies
\[
\langle J_zv,w\rangle_V+\langle v,J_zw\rangle_V=0\ \ \text{for any} \ \ z\in
\mathbb{R}^{r,s}\ \ \text{and any}\ \  v,w\in V.
\]
Then the scalar product $\langle\cdot\,,\cdot\rangle_V$
is non-degenerate or identically vanishes.
\end{lemma}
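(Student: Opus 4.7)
The proof should be a very short submodule argument, exploiting the irreducibility of $V$ together with the skew-symmetry of each $J_z$.

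The plan is to consider the radical of the bilinear form,
\[
V_0 = \{v \in V \mid \langle v, w\rangle_V = 0 \text{ for all } w \in V\},
\]
and show that $V_0$ is a $\Cl_{r,s}$-submodule. Once this is established, the irreducibility of $V$ as a Clifford module forces $V_0 = \{0\}$ or $V_0 = V$, which is exactly the dichotomy between non-degeneracy and vanishing of $\langle\cdot\,,\cdot\rangle_V$.

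The first step is to verify that $V_0$ is a linear subspace, which is immediate from the bilinearity of $\langle\cdot\,,\cdot\rangle_V$. The core computation is the invariance of $V_0$ under each $J_z$: for $v \in V_0$, arbitrary $w \in V$, and $z \in \mathbb{R}^{r,s}$, the skew-symmetry hypothesis gives
\[
\langle J_z v, w\rangle_V = -\langle v, J_z w\rangle_V = 0,
\]
since $J_z w \in V$ and $v$ lies in the radical. Hence $J_z v \in V_0$. As the operators $J_z$ for $z$ ranging over $\mathbb{R}^{r,s}$ generate the action of $\Cl_{r,s}$ on $V$, this shows $V_0$ is a Clifford submodule.

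The irreducibility of $V$ then leaves only the two options $V_0 = \{0\}$ (equivalently, $\langle\cdot\,,\cdot\rangle_V$ is non-degenerate) or $V_0 = V$ (equivalently, $\langle\cdot\,,\cdot\rangle_V$ vanishes identically on $V \times V$, since the radical coincides with the whole space iff every pairing is zero, using symmetry of the form). There is no real obstacle here; the only subtle point is remembering that $\langle\cdot\,,\cdot\rangle_V$ is only assumed symmetric and bilinear, not a scalar product a priori, so the conclusion has to be phrased as the dichotomy stated in the lemma rather than as an automatic non-degeneracy.
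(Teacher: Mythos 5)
Your proof is correct and matches the paper's argument essentially verbatim: the paper likewise defines the radical (calling it $N$), shows it is a $\Cl_{r,s}$-submodule via the skew-symmetry identity $\langle J_z v, w\rangle_V = -\langle v, J_z w\rangle_V = 0$, and concludes $N=\{0\}$ or $N=V$ by irreducibility.
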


\begin{proof}
Let $N=\{v\in V\mid\ \langle v,w\rangle_V=0\ \ \text{for any}\ \ w\in V\}$. Then
for any $z\in\mathbb{R}^{r,s}$ and $v\in N$
\[
\langle J_zv,w\rangle_V+\langle v,J_zw\rangle_V=0, \ \ 
\text{and}\ \ \langle v,J_zw\rangle_V=0,\ \ \text{for all}\ \ w\in V.
\]
Hence $\langle J_zv,w\rangle_V=0$, which shows that the subspace $N$ is invariant
under the action of the Clifford algebra.
So once we have an element $v\in V$ with $\langle v,v\rangle_V\not= 0$ then $N$ must
be the trivial space $\{0\}$ or entire $V$ due to the irreducibility of the module $V$.
\end{proof}

As was mentioned above, in the case of classical $H$-type algebras,
there is an admissible 
inner product for any $\Cl_{r,0}$-module, 
particularly, any irreducible module
can be an admissible module with an inner product.
However, for $s>0$ not all
irreducible $\Cl_{r,s}$-modules can be 
admissible modules. For instance, the Clifford algebra $\Cl_{0,2}$ is
isomorphic to the algebra $\R(2)$ of $(2\times 2)$ real matrices, and the
irreducible module is 2 dimensional, whereas the admissible $\Cl_{0,2}$-module of minimal
dimension is isomorphic to $\R^{2,2}$, see
Section~\ref{sec:0n}. The table presented in Appendix shows the Clifford algebras, where the circled
Clifford algebras has admissible modules of double dimension comparing 
with irreducible modules. However the following properties are still hold.

\begin{proposition}\label{prop:4}{\em (}\cite{Ciatti}{\em )}
Let $V$ be a $\Cl_{r,s}$-module, then there is a
scalar product on $V$ or on $V\oplus V$ with respect to which the resulting
$\Cl_{r,s}$-module is an admissible module.  The representation on $V\oplus V$ should be
redefined in an obvious way. 
\end{proposition}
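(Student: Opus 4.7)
The plan is to reduce the construction of an admissible scalar product to a purely algebraic question about the Clifford generators, and then to exhibit a suitable operator by a short case analysis that may require passing to $V\oplus V$.

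First, I would start from any positive definite inner product on $V$ and average it over the finite subgroup of $\GL(V)$ generated by $\pm J_{z_1},\ldots,\pm J_{z_{r+s}}$ (which is finite because $\Cl_{r,s}$ is a finite-dimensional associative algebra). The averaged inner product, still denoted $(\cdot\,,\cdot)$, makes every $J_{z_k}$ orthogonal, which combined with $J_{z_k}^{2}=\mp\Id$ forces $J_{z_i}^{T}=-J_{z_i}$ for $i\le r$ (skew) and $J_{z_j}^{T}=J_{z_j}$ for $j>r$ (symmetric).

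Second, I would seek the admissible form as $\langle v,w\rangle_V:=(Av,w)$ with $A\in\End(V)$ invertible and $(\cdot\,,\cdot)$-self-adjoint. A direct transcription of condition~\eqref{eq:J_skew} shows that admissibility is equivalent to
\[
[A,J_{z_i}]=0\ \ (i\le r),\qquad \{A,J_{z_j}\}=0\ \ (j>r).
\]
The natural candidate is the monomial $A:=J_{z_{r+1}}J_{z_{r+2}}\cdots J_{z_{r+s}}$. Using the Clifford relations one verifies: (a) $A$ has the required commutation pattern precisely when $s$ is even; (b) $A^{2}=(-1)^{s(s-1)/2}\Id$, so $A$ is always invertible; (c) $A^{T}=(-1)^{s(s-1)/2}A$, so $A$ is self-adjoint exactly when $s\equiv 0\pmod 4$. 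This already yields an admissible scalar product on $V$ itself in that residue class, and the symmetric monomial $J_{z_1}\cdots J_{z_r}$ in positive generators handles the cases with $r\equiv 3\pmod 4$ in the same way.

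Third, for the remaining parity combinations, or whenever the chosen monomial turns out skew-adjoint rather than self-adjoint on $V$, I would pass to $V\oplus V$, redefining the representation ``in the obvious way'' as $\tilde J_{z_k}:=\mathrm{diag}(J_{z_k},J_{z_k})$, which still satisfies the Clifford relations. Setting
\[
\tilde A\ :=\ \begin{pmatrix}0 & A\\ -A & 0\end{pmatrix},
\]
one checks directly that $\tilde A^{T}=\tilde A$ whenever $A^{T}=-A$, that $\tilde A^{2}=-A^{2}\oplus -A^{2}$ so $\tilde A$ is invertible, and that $\tilde A$ inherits the commutation pattern of $A$ with the $\tilde J_{z_k}$. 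Consequently $\langle v,w\rangle_{V\oplus V}:=(\tilde Av,w)$ is the required admissible scalar product. The main obstacle is the combinatorial bookkeeping across the four residue classes of $(r,s)\bmod 4$: for each class one must exhibit, explicitly, a monomial (possibly combined with the doubling block above) that is simultaneously invertible, self-adjoint, and has the correct sign pattern with positive versus negative generators; the cases that genuinely require doubling correspond precisely to the circled entries of the table in the Appendix, where the irreducible $\Cl_{r,s}$-module admits no intrinsic admissible form.
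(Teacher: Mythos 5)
Your setup is clean and correct up through the reduction to finding an invertible self-adjoint $A\in\End(V)$ with $[A,J_{z_i}]=0$ for $i\le r$ and $\{A,J_{z_j}\}=0$ for $j>r$, and your sign bookkeeping for the monomials $A=J_{z_{r+1}}\cdots J_{z_{r+s}}$ and $B=J_{z_1}\cdots J_{z_r}$ is accurate: $A$ has the right commutation pattern iff $s$ is even, $B$ iff $r$ is odd, and self-adjointness lands on $s\equiv 0\ (\mathrm{mod}\ 4)$, resp.\ $r\equiv 3\ (\mathrm{mod}\ 4)$. But the doubling step does not close the remaining cases, because your $\tilde J_{z_k}=\mathrm{diag}(J_{z_k},J_{z_k})$ is literally the direct-sum module, and, as you yourself observe, $\tilde A=\begin{pmatrix}0&A\\-A&0\end{pmatrix}$ then \emph{inherits} the commutation pattern of $A$. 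So the block trick repairs only the skew/self-adjoint parity, never the commute/anticommute parity. When $r$ is even and $s$ is odd (the entire residue class containing $\Cl_{0,1},\Cl_{0,3},\Cl_{0,5},\Cl_{0,7},\Cl_{2,1},\Cl_{2,3},\Cl_{4,1},\dots$), \emph{no} Clifford monomial has the required commutation pattern, and your construction produces nothing. The smallest case already fails outright: for $\Cl_{0,1}$, $V$ is one-dimensional and $J_{z_1}=\pm\Id_V$, so the direct-sum double has $\tilde J_{z_1}=\pm\Id_{V\oplus V}$, and no invertible operator anticommutes with $\pm\Id$; hence no admissible form exists on this $V\oplus V$ at all, contradicting what you need.

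The phrase ``redefined in an obvious way'' in the proposition is pointing precisely at the fix you are missing: on $V\oplus V$ one should \emph{twist} the Clifford action, not take the diagonal sum. For instance, keep $\tilde J_{z_i}=\mathrm{diag}(J_{z_i},J_{z_i})$ for the $r$ positive generators but set $\tilde J_{z_j}=\begin{pmatrix}0&J_{z_j}\\J_{z_j}&0\end{pmatrix}$ for the $s$ negative ones; one checks directly that this is still a $\Cl_{r,s}$-representation. The admissible $A$ is then the grading operator $\tilde A=\mathrm{diag}(\Id_V,-\Id_V)$, which is invertible, self-adjoint, commutes with every diagonal $\tilde J_{z_i}$ and anticommutes with every off-diagonal $\tilde J_{z_j}$ — note that $\tilde A$ is \emph{not} a Clifford monomial in the $\tilde J$'s, which is exactly why no monomial could have worked. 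This handles every $(r,s)$ in one stroke on $V\oplus V$ with the neutral form $(v_1,w_1)_V-(v_2,w_2)_V$, and then one checks case by case (the monomial analysis you already did) when the form restricts nontrivially to a single copy $V$. As written, your proof leaves a genuine gap for $r$ even, $s$ odd, and the block doubling you propose cannot be adapted to fill it without changing the representation itself.
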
  

Particularly for irreducible modules $V$ the following corollary holds.

\begin{corollary}\label{cor:1}
An irreducible $\Cl_{r,s}$-module can be either an
admissible module of minimal dimension or 
the double of the irreducible module is the admissible module of minimal dimension.
As a result, a generalized $H$-type algebra constructed from the minimal
dimensional
admissible module is unique up to isomorphism. 
\end{corollary}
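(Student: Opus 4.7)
The plan is to combine Propositions~\ref{prop:1} and~\ref{prop:4}. Let $V$ be an irreducible $\Cl_{r,s}$-module and let $W$ denote an admissible $\Cl_{r,s}$-module of minimal dimension that contains a Clifford-isomorphic copy of $V$. By Proposition~\ref{prop:1}(2), the orthogonal complement in $W$ of any proper admissible Clifford submodule would again be admissible, so the minimality of $\dim W$ forces $W$ itself to admit no proper admissible Clifford submodule.

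I would then branch according to Proposition~\ref{prop:4}. If $V$ itself carries an admissible scalar product, then $W=V$: since $V$ is already Clifford-irreducible, it is automatically admissible-irreducible, and no strictly smaller admissible module containing $V$ can exist. If, on the other hand, $V$ admits no admissible scalar product, Proposition~\ref{prop:4} provides an admissible structure on $V\oplus V$ (with the appropriately modified representation), so $\dim W\le 2\dim V$. Any proper nonzero Clifford submodule of $V\oplus V$ is isomorphic to $V$ by irreducibility and hence is not admissible; this rules out $\dim W=\dim V$ and forces $W\cong V\oplus V$ as a Clifford module.

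For the uniqueness up to isomorphism of the resulting general $H$-type algebra, I would show that any two admissible scalar products $\langle\cdot\,,\cdot\rangle_1$ and $\langle\cdot\,,\cdot\rangle_2$ on $W$, each satisfying~\eqref{eq:J_skew}, yield isomorphic Lie algebras. Concretely, I would produce a $\Cl_{r,s}$-equivariant linear map $T\colon W\to W$ that sends the first form to the second, and then verify, using~\eqref{eq:J}, that $T\oplus \Id_Z$ intertwines the Lie brackets on $V\oplus_{\perp} Z$ coming from the two scalar products. The existence of such a $T$ is controlled by the commutant of the Clifford action on $W$, which by Schur's lemma is a real division algebra (or a matrix algebra over one), together with Lemma~\ref{prop:2}, which guarantees that any nonzero admissible form on an irreducible constituent is non-degenerate.

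The main obstacle is this last step: matching two admissible forms via a single Clifford-equivariant transformation. In the generic case where $\Cl_{r,s}$ has a unique irreducible the commutant analysis is routine, but in the cases $r-s\equiv 3\pmod 4$, where two inequivalent irreducibles occur, one must additionally track how each of them appears inside $W$ in order to classify the $\Cl_{r,s}$-invariant non-degenerate symmetric bilinear forms and to ensure that a single equivariant $T$ transports $\langle\cdot\,,\cdot\rangle_1$ onto $\langle\cdot\,,\cdot\rangle_2$.
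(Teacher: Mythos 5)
The dimension claim in your proposal matches the paper's implicit argument: the paper gives no explicit proof of Corollary~\ref{cor:1}, presenting it as an immediate consequence of Propositions~\ref{prop:1}, \ref{prop:2} and \ref{prop:4}, and your unpacking of that chain (every admissible module decomposes into admissible sub-modules, an admissible irreducible is automatically minimal, and the proper Clifford submodules of $V\oplus V$ are all isomorphic to $V$ so cannot be admissible when $V$ is not) is the intended reasoning.

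There is, however, a gap in the dimension argument that you do not address. You take $W$ to be a minimal admissible module \emph{containing a copy of $V$}, and conclude $W\cong V$ or $W\cong V\oplus V$. But the corollary asserts minimality among \emph{all} admissible modules. When $\Cl_{r,s}$ is of the form $A\oplus A$ ($r-s\equiv 3\pmod 4$), there are two inequivalent irreducibles $V$ and $V'$ of the same dimension, and a minimal admissible module need not contain a copy of $V$ at all. To rule out the degenerate scenario ``$V$ not admissible but $V'$ admissible'' (which would make $V\oplus V$ fail to be minimal), you need the observation that $V$ and $V'$ are interchanged by an automorphism of $\Cl_{r,s}$ induced by an isometry of $(\mathbb{R}^{r,s},\Q_{r,s})$ (e.g.\ $z_1\mapsto -z_1$, which reverses the volume element), and that admissibility is invariant under such automorphisms; hence $V$ is admissible if and only if $V'$ is. Without this, the step from ``minimal among admissibles containing $V$'' to ``minimal admissible'' does not close.

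For the uniqueness claim, the paper again supplies no proof. Your proposed strategy — produce a $\Cl_{r,s}$-equivariant isometry $T$ between the two admissible scalar products and extend by $\Id_Z$ to intertwine the Lie brackets defined by~\eqref{eq:J} — is the natural route, and you correctly identify the crux: classifying the $\Cl_{r,s}$-invariant non-degenerate symmetric bilinear forms on $W$ via Schur's lemma and the structure of the commutant, with extra care in the split cases. As stated this is an announced plan rather than a proof; the step of showing that the commutant acts transitively on the admissible forms (or that there is only one such form up to isomorphism) is exactly what remains to be supplied, and the difficulty you flag in the $r-s\equiv 3\pmod 4$ case is the same two-irreducible subtlety that appears in the dimension part.
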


Any irreducible $\Cl_{r,s}$-module $V$ can be generated
by a non-zero vector $v\in V$ and the subsequent actions by $J_z$, $z\in\mathbb R^{r,s}$ (or Clifford multiplication by $z$). Similarly,
we have the following statement. 
\begin{proposition}\label{prop:5}
Any minimal dimensional admissible module is generated by a
non-null vector.
\end{proposition}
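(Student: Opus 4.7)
The plan is to analyse two cases according to Corollary~\ref{cor:1}: either the minimal admissible module $V$ is itself an irreducible $\Cl_{r,s}$-module, or $\dim V=2k$ where $k$ is the common dimension of the irreducible $\Cl_{r,s}$-modules. In both cases I will first exhibit a non-null vector in $V$ and then analyse its Clifford orbit.

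That $V$ contains a non-null vector follows from a polarization argument: if $\langle v,v\rangle_V=0$ for every $v\in V$, then the identity
\[
2\langle u,v\rangle_V=\langle u+v,u+v\rangle_V-\langle u,u\rangle_V-\langle v,v\rangle_V
\]
would force $\langle\cdot\,,\cdot\rangle_V$ to vanish identically, contradicting the non-degeneracy of the admissible form. In the irreducible case the proof is immediate, since any non-zero vector generates an irreducible Clifford module under the Clifford action.

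The main content is the non-irreducible case. Here I would first show that every irreducible $\Cl_{r,s}$-submodule $W_1\subseteq V$ is totally isotropic. By Lemma~\ref{prop:2} the restriction $\langle\cdot\,,\cdot\rangle_V|_{W_1}$ is either non-degenerate or identically zero; the first alternative would turn $W_1$ into an admissible submodule of dimension $k<\dim V$, contradicting the minimality of $V$. Now let $v\in V$ be non-null and put $W=\Cl_{r,s}\cdot v$. Since $\Cl_{r,s}$ is semisimple, the cyclic submodule $W$ is a direct sum of irreducible $\Cl_{r,s}$-submodules, each of dimension $k$, so $\dim W\in\{k,2k\}$. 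If $\dim W=k$, then $W$ itself is irreducible, hence totally isotropic by the previous step, contradicting $\langle v,v\rangle_V\ne 0$. Therefore $\dim W=2k=\dim V$, and $v$ generates $V$.

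The main obstacle, and the point at which the hypothesis of \emph{minimality} is indispensable, is the step that every irreducible submodule in the non-irreducible case is totally isotropic. It is precisely Lemma~\ref{prop:2}'s dichotomy combined with the minimality of $\dim V$ that forces the Clifford orbit of a non-null vector to leave every irreducible summand and thus to exhaust all of $V$.
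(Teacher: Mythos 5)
Your proof is correct. The paper actually states Proposition~\ref{prop:5} without supplying a proof; your argument fills this gap using exactly the tools the paper has set up. The key moves---polarization to obtain a non-null vector, Lemma~\ref{prop:2} applied to an irreducible (but possibly non-admissible) submodule to force total isotropy under the minimality hypothesis, and the length-two counting that rules out a cyclic submodule of length one---are all sound, and they interlock with Corollary~\ref{cor:1} (which guarantees that a minimal admissible module has length one or two over the semisimple algebra $\Cl_{r,s}$) in the way the paper evidently intends. One small point worth making explicit if you were to polish this: the cyclic submodule $W=\Cl_{r,s}\cdot v$ lies inside $V$, so its composition factors are among those of $V$, all of the common dimension $k$; that is why $\dim W\in\{k,2k\}$ even when $\Cl_{r,s}$ is a direct sum of two simple factors with two non-isomorphic irreducibles.
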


\begin{lemma}\label{prop:6}
Let $\Cl_{r,s}$ be a Clifford algebra with orthonormal generators
$\{z_i\}$, $i=1,\ldots,r+s$ and $(V,\langle\cdot,\cdot\rangle_V)$ an admissible $\Cl_{r,s}$-module with an orthonormal basis $\{v_\alpha\}_{\alpha=1}^{\dim V}$. Then
$J_{z_i}v_\alpha \not= \pm J_{z_j}v_\alpha$ for $i\not =j$.
\end{lemma}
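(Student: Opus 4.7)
The plan is to argue by contradiction: suppose there exist distinct indices $i\neq j$, a basis vector $v_\alpha$, and a sign $\epsilon\in\{+1,-1\}$ with $J_{z_i}v_\alpha=\epsilon J_{z_j}v_\alpha$, and derive a contradiction from the Clifford anticommutation relations together with the orthogonality property~\eqref{eq:J_orth}.

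First I would compare the two sides as vectors and extract information about the signatures of $z_i$ and $z_j$. Since the basis $\{v_\alpha\}$ is orthonormal, $\nu_\alpha^V:=\langle v_\alpha,v_\alpha\rangle_V\in\{+1,-1\}$, in particular nonzero. Applying~\eqref{eq:J_orth} to both $J_{z_i}v_\alpha$ and $J_{z_j}v_\alpha$ gives
\[
\langle z_i,z_i\rangle_Z\,\nu_\alpha^V=\langle J_{z_i}v_\alpha,J_{z_i}v_\alpha\rangle_V=\langle \epsilon J_{z_j}v_\alpha,\epsilon J_{z_j}v_\alpha\rangle_V=\langle z_j,z_j\rangle_Z\,\nu_\alpha^V,
\]
so $\langle z_i,z_i\rangle_Z=\langle z_j,z_j\rangle_Z=:\delta\in\{+1,-1\}$.

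Next I would invoke the Clifford relations. The hypothesis is equivalent to $J_{z_j}v_\alpha=\epsilon J_{z_i}v_\alpha$. Applying $J_{z_j}$ to the first form and $J_{z_i}$ to the second, and using $J_{z_k}^2=-\langle z_k,z_k\rangle_Z\,\Id_V=-\delta\,\Id_V$ for $k\in\{i,j\}$, I obtain
\[
J_{z_j}J_{z_i}v_\alpha=-\epsilon\delta\,v_\alpha,\qquad J_{z_i}J_{z_j}v_\alpha=-\epsilon\delta\,v_\alpha.
\]
Adding these two identities yields $(J_{z_i}J_{z_j}+J_{z_j}J_{z_i})v_\alpha=-2\epsilon\delta\,v_\alpha$. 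The left hand side is zero by the Clifford anticommutation for the distinct orthonormal generators $z_i,z_j$, whereas the right hand side is nonzero since $\epsilon\delta=\pm 1$ and $v_\alpha\neq 0$. This contradiction completes the argument.

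There is no real obstacle here; the only thing to be careful about is that orthonormality must be used to ensure $\nu_\alpha^V\neq 0$ (so that in the first step one can cancel it and conclude that $z_i$ and $z_j$ have the same signature). If $v_\alpha$ were merely nonzero null, the first step would collapse and one would need an extra argument. Since the statement assumes an orthonormal basis, this subtlety does not occur.
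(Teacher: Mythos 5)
Your proof is correct, and it is a genuine variant of the paper's argument rather than a reproduction of it. Both proofs rely on exactly two inputs: the Clifford relation $J_z^2=-\langle z,z\rangle_Z\Id_V$ (equivalently, the anticommutation of $J_{z_i}$ and $J_{z_j}$) and the orthogonality property~\eqref{eq:J_orth} of admissible representations, plus the fact that $v_\alpha$ is non-null. The difference is the order and the way each input is exploited. The paper first applies the Clifford relation to $J_{z_i\mp z_j}v_\alpha=0$ and concludes $\langle z_i,z_i\rangle_Z+\langle z_j,z_j\rangle_Z=0$ (opposite signatures), then applies orthogonality twice to $J_{z_i}J_{z_j}v_\alpha=\pm v_\alpha$ to derive $\langle v_\alpha,v_\alpha\rangle_V=-\langle v_\alpha,v_\alpha\rangle_V$. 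You instead first use orthogonality once to conclude $\langle z_i,z_i\rangle_Z=\langle z_j,z_j\rangle_Z$ (\emph{same} signature), and then apply the Clifford anticommutation to reach $-2\epsilon\delta\,v_\alpha=0$, forcing $v_\alpha=0$. The two arguments are in this sense ``dual'': each uses one of the two algebraic tools in the opposite role from the other. Your version has a marginally more direct punchline (a nonzero vector equals zero), while the paper's intermediate conclusion that one of $z_i,z_j$ is spacelike and the other timelike is perhaps slightly more suggestive geometrically; otherwise they are of comparable length and transparency, and both correctly isolate the orthonormality hypothesis as the reason $v_\alpha$ must be non-null.
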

\begin{proof}
Let us assume that $J_{z_i}v_\alpha = \pm J_{z_j}v_\alpha$ for some $i\not= j$ and an element $v_\alpha$ of the orthonormal basis.
From the assumption we have $J_{z_i\pm z_j}v_\alpha=0$ and hence
$J^2_{z_i\pm z_j}v_\alpha=-\langle z_i\pm z_j,z_i\pm z_j\rangle_{\mathbb  R^{r,s}} v_\alpha=0$, which
implies that $\langle z_i\pm z_j,z_i\pm z_j\rangle_{\mathbb  R^{r,s}} = 0 = \langle z_i,z_i\rangle_{\mathbb  R^{r,s}}+\langle z_j,z_j\rangle_{\mathbb  R^{r,s}}$. So, 
one of $z_i$ or $z_j$ is spacelike, and the other is timelike.
The assumption also implies that
$J_{z_i}J_{z_j}v_\alpha=\pm v_\alpha$, which leads to 
$$
\langle\pm v_\alpha,\pm v_\alpha \rangle_V=\langle J_{z_i}J_{z_j} v_\alpha,J_{z_i}J_{z_j} v_\alpha \rangle_V =\langle z_i,z_i\rangle_{\mathbb  R^{r,s}}\langle z_j,z_j\rangle_{\mathbb  R^{r,s}}\langle v_\alpha,v_\alpha \rangle_V=-\langle v_\alpha,v_\alpha \rangle_V.
$$
This is a contradiction since $v_{\alpha}$ is a non-null vector. 
\end{proof}
\begin{corollary}\label{cor:2-1}
Under assumptions of Lemma~\ref{prop:6} if we additionally assume that
each operator $J_{z_i}$ permutes the basis $\{v_\alpha\}$ up to sign, 
that is $J_{z_i}v_{\alpha}=\pm v_\beta$ for any $i,\alpha$ and some $\beta$, then 
with 
the given orthonormal generators $\{z_i\}$ of the Clifford algebra 
the basis $\{v_\alpha,z_i\}$ of the general Lie algebra
$V\oplus_{\perp}\mathbb{R}^{r,s}$ has structure
constants
$A^k_{\alpha,\beta}$ equal $\pm 1$ or 0.
\end{corollary}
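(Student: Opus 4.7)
The corollary is essentially a direct computation from the already-established identity~\eqref{eq:AB}, so the plan is short. The key observation is that once the basis $\{v_\alpha\}$ is orthonormal with respect to the (possibly indefinite) scalar product $\langle\cdot\,,\cdot\rangle_V$, each index $\nu_\alpha^V=\langle v_\alpha,v_\alpha\rangle_V$ equals $\pm1$, and likewise $\nu_k^Z=\pm1$ for the orthonormal generators $\{z_k\}$ of $\mathbb{R}^{r,s}$. Thus all the normalization factors appearing in~\eqref{eq:AB} are signs.

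First I would unpack the hypothesis $J_{z_k}v_\alpha=\pm v_\beta$. Writing $J_{z_k}v_\alpha=\sum_{\gamma=1}^{m}B_{\alpha\gamma}^{k}v_\gamma$ as in~\eqref{eq:str_con}, the assumption says that for each pair $(k,\alpha)$ there is exactly one index $\beta=\beta(k,\alpha)$ with $B_{\alpha\beta}^{k}=\pm 1$ and all other $B_{\alpha\gamma}^{k}=0$ for $\gamma\ne\beta$. (At this point one could also invoke Lemma~\ref{prop:6} to note that different generators $z_i$ send $v_\alpha$ to different basis vectors, although this is not needed for the conclusion.)

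Next I would apply~\eqref{eq:AB}, which reads $\nu_\beta^V B_{\alpha\beta}^{k}=\nu_k^Z A_{\alpha\beta}^{k}$. Since $\nu_k^Z=\pm1$, this can be solved for the structure constant as
\[
A_{\alpha\beta}^{k}=\nu_k^Z\,\nu_\beta^V\,B_{\alpha\beta}^{k}.
\]
With $\nu_k^Z,\nu_\beta^V\in\{-1,+1\}$ and $B_{\alpha\beta}^{k}\in\{-1,0,+1\}$, it follows immediately that $A_{\alpha\beta}^{k}\in\{-1,0,+1\}$, which is exactly the claim.

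There is really no obstacle here; the content of the corollary is that the bilinear identity~\eqref{eq:AB} propagates the $\{-1,0,+1\}$ property from the $B$-coefficients (the matrix entries of the operators $J_{z_k}$ in the basis $\{v_\alpha\}$) to the $A$-coefficients (the Lie algebra structure constants), provided both bases are orthonormal in the scalar-product sense. Hence the role of this corollary in the sequel is to reduce the problem of producing an integral basis on the general $H$-type algebra $\mathscr N=V\oplus_\perp Z$ to the purely Clifford-module statement that $J_{z_i}$ permutes some orthonormal basis of $V$ up to sign.
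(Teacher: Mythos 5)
Your proof is correct and is essentially the only reasonable route: the paper itself gives no written proof of this corollary, leaving it as an immediate consequence of identity~\eqref{eq:AB}, and your computation $A^k_{\alpha\beta}=\nu_k^Z\,\nu_\beta^V\,B^k_{\alpha\beta}$ with all factors in $\{-1,0,+1\}$ is exactly what is intended. Your side remark that Lemma~\ref{prop:6} itself is not logically needed for the conclusion (only its hypotheses, i.e.\ the orthonormality of $\{v_\alpha\}$ and $\{z_k\}$) is also accurate and a clean observation.
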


In fact, in the present work we show that for any minimal dimensional admissible module $(V\langle \cdot\,,\cdot\rangle_V)$, one can find a vector $w\in V$ with $\langle w,w\rangle_V=1$ such that
the $2^{r+s}$-vectors 
\begin{equation*}
w,J_1w,\ldots,J_{r+s}w, \quad J_1J_2w,\ldots,J_{r+s-1}J_{r+s}w,\quad\ldots
J_{i_1}J_{i_2}\ldots J_{i_{k}}w,\quad \ldots J_1J_2\cdots J_{r+s}w,
\end{equation*}
satisfy the property that
any two vectors 
$J_{i_1}J_{i_2}\cdots J_{i_{k}}w$ and $J_{j_1}J_{j_2}\cdots J_{j_{\ell}}w$
coincide up to sign or orthogonal and
therefore we can select an orthonormal basis
which is produced from $w\in V$ by action of generators
$J_i=J_{z_i}$. 
Moreover $J_{z_i}$ act on the obtained basis as permutations up to sign.
Hence we prove that any general H-type algebra
have an integral structure according Lemma~\ref{prop:6} and Corollary~\ref{cor:2-1}.
 
Further we collect some auxiliary technical lemmas, 
that will be used later. We also say that a linear operator 
$\Omega\colon V\to V$ on a scalar product space
$(V,\langle\cdot\,,\cdot\rangle_V)$ 
is {\it symmetric} if $\langle \Omega v,w\rangle_V=\langle v,\Omega w\rangle_V$ for all $v,w\in V$.
 
\begin{lemma}\label{orthogonalization}
Let $(V,\langle\cdot\,,\cdot\rangle_V)$ be an admissible module, 
$\Omega\colon V\to V$ a symmetric linear operator such that 
$\Omega^2=-\Id_V$. Then for any $w\in V$ with $\langle w,w\rangle_V=1$ 
there is $\lambda\in\mathbb R$ such that $\tilde w=w+\lambda\Omega(w)$ satisfies:
$$
\langle\tilde w,\Omega\tilde w\rangle_V=0,\quad\text{and}\quad\langle\tilde w,\tilde w\rangle_V=1.
$$
\end{lemma}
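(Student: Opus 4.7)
The plan is to work inside the two-dimensional subspace $\spn\{w,\Omega w\}$ and determine $\lambda$ from an explicit quadratic equation coming from the orthogonality requirement, adjusting for the normalization at the end.

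The main algebraic ingredient is that $\Omega$ is an anti-isometry: combining symmetry with $\Omega^2=-\Id_V$ gives
$$
\langle \Omega v,\Omega v\rangle_V=\langle v,\Omega^2 v\rangle_V=-\langle v,v\rangle_V
$$
for every $v\in V$, so in particular $\langle\Omega w,\Omega w\rangle_V=-1$ and the Gram matrix of $\{w,\Omega w\}$ has signature $(+,-)$. Writing $a:=\langle w,\Omega w\rangle_V$ (which is also $\langle \Omega w,w\rangle_V$ by symmetry of $\Omega$) and using the identity $\Omega\tilde w=\Omega w-\lambda w$, direct expansion yields
\begin{align*}
\langle\tilde w,\Omega\tilde w\rangle_V&=a(1-\lambda^2)-2\lambda,\\
\langle\tilde w,\tilde w\rangle_V&=1+2a\lambda-\lambda^2.
\end{align*}
The orthogonality condition therefore becomes the quadratic $a\lambda^2+2\lambda-a=0$, whose discriminant $4(1+a^2)$ is strictly positive, so a real root $\lambda$ always exists (when $a=0$ the trivial value $\lambda=0$ already works).

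For the normalization I would substitute the relation $a\lambda^2=a-2\lambda$ extracted from the orthogonality equation back into the norm expression; this reduces $\langle\tilde w,\tilde w\rangle_V$ to an explicit function of $a$ and $\lambda$ alone, and a scalar rescaling inside the plane $\spn\{w,\Omega w\}$, which leaves the orthogonality relation untouched, brings $\tilde w$ to unit norm. In my view the main technical subtlety, and hence the main obstacle, is the joint consistency of the two scalar conditions against essentially one free parameter; the key observation which makes this consistency automatic is that the anti-isometry of $\Omega$ forces the plane $\spn\{w,\Omega w\}$ to have indefinite signature $(+,-)$, so the locus $\langle\tilde w,\tilde w\rangle_V=1$ is a hyperbola meeting the orthogonality locus, and the appropriate root of the quadratic (paired with the rescaling) selects the intersection point.
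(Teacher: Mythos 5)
Your proposal is correct and follows essentially the same route as the paper's own proof: you establish the anti-isometry $\langle\Omega v,\Omega v\rangle_V=-\langle v,v\rangle_V$ from symmetry and $\Omega^2=-\Id_V$, set $a=\langle w,\Omega w\rangle_V$, expand the two scalar conditions to get the same quadratic $a\lambda^2+2\lambda-a=0$ and the same norm expression $1+2a\lambda-\lambda^2$, choose the root of the quadratic that makes the norm positive, and normalize. The conceptual remark about the $(+,-)$ signature of $\spn\{w,\Omega w\}$ is a pleasant aside, but the decisive step in both arguments is the explicit sign check $\langle\tilde w,\tilde w\rangle_V=2\lambda(a^2+1)/a>0$ for the appropriate root, which you indicate but do not carry out; the paper does this computation explicitly.
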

\begin{proof}

First we clame that $\langle\Omega w,\Omega w\rangle_V=-\langle w,w\rangle_V$. Indeed 
$$
\langle\Omega w,\Omega w\rangle_V=\langle\Omega^2 w, w\rangle_V=-\langle w, w\rangle_V.
$$

Let $w\in V$ be a vector such that $\langle w,w\rangle_V=1$ and assume 
$\langle w,\Omega w\rangle_V=a$. If $a=0$, then we choose
$\lambda=0$. Thus, 
we can assume that $\langle w,\Omega w\rangle_V=a\neq 0$. Then, by solving the equation 
$$
\langle \tilde{w}, \Omega \tilde{w}\rangle_{V} 
=\langle w+\lambda \Omega w, -\lambda w+ \Omega w \rangle_{V} = -\lambda + a -\lambda^2 a -\lambda=-\big(a\lambda^2 + 2\lambda-a\big)=0,
$$
we find the solutions of this equation as
\[
\lambda=-\frac{1}{a}\pm\sqrt{1+\frac{1}{a^2}}\quad\text{for}\quad a\neq 0.
\]
For this $\lambda$ we get
\[
\langle \tilde w,\tilde w\rangle_V  =\langle w+\lambda \Omega w,w+\lambda \Omega w\rangle_V
=1-\lambda^2+2\lambda a
=2\Big(a+\frac{1}{a}\Big)\left(-\frac{1}{a}\pm\sqrt{1+\frac{1}{a^2}}\right)
=2\lambda\frac{a^2+1}{a}\neq 0.
\]
If $a>0$ then we choose
$\lambda=-\frac{1}{a}+\sqrt{1+\frac{1}{a^2}}>0$ 
and if $a<0$ then we choose
$\lambda=-\frac{1}{a}-\sqrt{1+\frac{1}{a^2}}<0$. 
This choice makes the product $\langle \tilde w,\tilde w\rangle_V$ strictly positive. Normalizing $\tilde w$
we get $\langle \tilde w,\tilde w\rangle=1$.
\end{proof}

The next lemma is a generalizaition of the previous one.

\begin{lemma}\label{orthogonal}
Let $(V,\langle\cdot\,,\cdot\rangle_V)$ be an admissible module, $\Omega_1,\ldots,\Omega_l$ symmetric linear operators on $V$ such that 
\begin{itemize}
\item[1)] $\Omega^2_k=-\Id_V$, $k=1,\ldots,l$;
\item[2)] $\Omega_k\Omega_j=-\Omega_j\Omega_k$ for all $k,j=1,\ldots,l$.
\end{itemize}
Then for any $w\in V$ with $\langle w,w\rangle_V=1$ there is a vector $\tilde w$ satisfying:
$$
\langle\tilde w,\Omega_k\tilde w\rangle_V=0,\quad\text{and}\quad\langle\tilde w,\tilde w\rangle_V=1,\ \ k=1,\ldots,l.
$$
\end{lemma}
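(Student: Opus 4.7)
The natural plan is to prove the lemma by induction on $l$, using Lemma~\ref{orthogonalization} (the case $l=1$) as the base case. For the inductive step, I would assume the claim for $l-1$ anticommuting symmetric anti-involutions, apply it to $\Omega_1,\ldots,\Omega_{l-1}$ to produce a unit vector $w'$ with $\langle w',\Omega_k w'\rangle_V=0$ for $k=1,\ldots,l-1$, and then use Lemma~\ref{orthogonalization} on the single operator $\Omega_l$ applied to $w'$ to build $\tilde w = w' + \lambda\,\Omega_l w'$ (rescaled if necessary) so that $\langle\tilde w,\Omega_l\tilde w\rangle_V=0$ and $\langle\tilde w,\tilde w\rangle_V=1$.

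The key point that makes induction close is that this last adjustment preserves the orthogonality conditions already achieved for $\Omega_1,\ldots,\Omega_{l-1}$. The plan is therefore to verify, for each $k<l$, that $\langle\tilde w,\Omega_k\tilde w\rangle_V=0$. Expanding bilinearly, one gets four terms: the $\lambda^0$-term vanishes by the inductive hypothesis; the two $\lambda^1$-terms are of the form $\langle w',\Omega_k\Omega_l w'\rangle_V$ and $\langle\Omega_l w',\Omega_k w'\rangle_V$, and by the symmetry of $\Omega_l$ together with the anticommutation relation $\Omega_l\Omega_k=-\Omega_k\Omega_l$ these two cancel; the $\lambda^2$-term $\langle\Omega_l w',\Omega_k\Omega_l w'\rangle_V$ equals $\langle w',\Omega_l\Omega_k\Omega_l w'\rangle_V = -\langle w',\Omega_k\Omega_l^2 w'\rangle_V = \langle w',\Omega_k w'\rangle_V$, which vanishes again by the inductive hypothesis.

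The anticommutativity together with $\Omega_l^2=-\Id_V$ is exactly what makes all cross terms disappear, so the adjustment step is essentially harmless with respect to the previously established orthogonalities. This reduces the problem to a single clean application of Lemma~\ref{orthogonalization} at each induction step, and finally a rescaling to make $\langle\tilde w,\tilde w\rangle_V=1$ (which is possible because the rescaled squared norm produced in Lemma~\ref{orthogonalization} is strictly positive).

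The main obstacle I anticipate is precisely the bookkeeping in the inductive step: one must check that the update $w'\mapsto w'+\lambda\Omega_l w'$ does not spoil any of the $l-1$ previously arranged vanishings. Once this is seen to follow cleanly from the symmetry of the $\Omega_k$ combined with their pairwise anticommutation, the proof is essentially finished by applying Lemma~\ref{orthogonalization} one more time. No additional use of the Clifford-module structure is needed beyond having $\langle\cdot\,,\cdot\rangle_V$ non-degenerate and the $\Omega_k$ symmetric with $\Omega_k^2=-\Id_V$, $\Omega_k\Omega_j+\Omega_j\Omega_k=0$.
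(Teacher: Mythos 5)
Your proposal is correct and follows essentially the same route as the paper: induction on $l$, with Lemma~\ref{orthogonalization} as the base case and as the engine of the inductive step, followed by a check that the correction $w'\mapsto w'+\lambda\Omega_l w'$ preserves the earlier vanishings. The only cosmetic difference is that you argue the two $\lambda^1$-terms cancel, whereas the paper derives the identity $\langle\Omega_k v,\Omega_j v\rangle_V=0$ once (from symmetry and anticommutation) and uses it to kill each of those terms separately; these two observations are equivalent given the standing symmetry hypothesis.
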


\begin{proof}
Notice that symmetry of operators and property 2) imply 
\begin{equation}\label{eq:orthogonal1}
\langle\Omega_k v,\Omega_jv\rangle_V=0
\end{equation}
for any $v\in V$. Indeed
$
\langle\Omega_k v,\Omega_jv\rangle_V=\langle\Omega_j\Omega_k v,v\rangle_V=-\langle\Omega_k\Omega_j v,v\rangle_V=-\langle\Omega_j v,\Omega_kv\rangle_V,
$
that shows~\eqref{eq:orthogonal1}.

To prove Lemma~\ref{orthogonal} we choose $w\in V$ with $\langle w,w\rangle_{V}=1$. Apply Lemma~\ref{orthogonalization} to $w$ and $\Omega_1$ and construct $w_1\in V$ such that 
\begin{equation}\label{eq:w1}
\langle w_1,w_1\rangle_{V}=1,\quad \langle w_1,\Omega_1 w_1\rangle_{V}=0.
\end{equation}
Then we define $w_2=w_1+\lambda_2\Omega_2w_1$ and find that for suitable $\lambda_2\in\mathbb R$ we have
$$
\langle w_2,w_2\rangle_{V}=1,\quad \langle w_2,\Omega_2 w_2\rangle_{V}=0.
$$
Moreover, 
\begin{eqnarray*}
\langle w_2,\Omega_1w_2\rangle_{V} 
& = & \langle w_1,\Omega_1w_1\rangle_{V}+ 
\lambda_2\langle w_1,\Omega_1\Omega_2w_1\rangle_{V}+ 
\lambda_2\langle \Omega_2w_1,\Omega_1w_1\rangle_{V} \nonumber
\\
& + &
\lambda_2^2\langle\Omega_2 w_1,\Omega_1\Omega_2w_1\rangle_{V} =0
\end{eqnarray*}
by assumptions of Lemma~\ref{orthogonal} and properties~\eqref{eq:orthogonal1} and~\eqref{eq:w1}.
Now, applying Lemma~\ref{orthogonalization} we assume that the vector $w_k=w_{k-1}+\lambda_k\Omega_kw_{k-1}$, $2<k<l$, is chosen and satisfies
$$
\langle w_k,w_k\rangle_{V}=1,\quad \langle \Omega_k w_k,w_k\rangle_{V}=0,
$$
where it was shown that 
$$
\langle w_{k-1},w_{k-1}\rangle_{V}=1,\quad \langle w_{k-1},\Omega_j w_{k-1}\rangle_{V}=0, \ \ j=1,\ldots, k-1.
$$
Then 
\begin{eqnarray*}
\langle w_k,\Omega_jw_k\rangle_{V} 
& = & \langle w_{k-1},\Omega_jw_{k-1}\rangle_{V}+ 
\lambda_k\langle w_{k-1},\Omega_j\Omega_kw_{k-1}\rangle_{V}+ 
\lambda_k\langle \Omega_kw_{k-1},\Omega_jw_{k-1}\rangle_{V} \nonumber
\\
& + &
\lambda_k^2\langle\Omega_k w_{k-1},\Omega_{j}\Omega_kw_{k-1}\rangle_{V} =0,\quad\text{for any}\quad j=1,\ldots, k-1.
\end{eqnarray*}
by~\eqref{eq:orthogonal1} and assumption on operators $\Omega_j$. Denoting the last vector $w_l$ by $\widetilde w$, we finish the proof of Lemma~\ref{orthogonal}.
\end{proof}

\begin{corollary}\label{cor:orthogonal}
Let $(V,\langle\cdot\,,\cdot\rangle_V)$ and $\Omega_1,\ldots,\Omega_l$ satisfies the conditions of Lemma~\ref{orthogonal}. Let $P$ be a linear operator on $V$ such that 
$
P^2=\Id_V$, and $P\Omega_k=\Omega_kP$, $k=1,\ldots,l.
$
If $w\in V$ satisfies $Pw=w$ and $\langle w,w\rangle_{V}=1$, then the vector $\widetilde w$ constructed in Lemma~\ref{orthogonal} is also eigenvector of $P$: $P\widetilde w=\widetilde w$.
 \end{corollary}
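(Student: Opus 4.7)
The plan is a short induction that tracks the eigenvector property of $P$ through the inductive construction used to build $\widetilde w$ in the proof of Lemma~\ref{orthogonal}. Recall that the construction starts from $w_0=w$ and defines $w_k = w_{k-1}+\lambda_k\Omega_kw_{k-1}$ at each step, with scalars $\lambda_k\in\mathbb{R}$ chosen via Lemma~\ref{orthogonalization}; finally $\widetilde w$ is (a normalization of) $w_l$.

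First I would check the base case: $Pw_0=Pw=w=w_0$ by hypothesis. For the inductive step, suppose $Pw_{k-1}=w_{k-1}$. Using linearity of $P$ together with the commutation relation $P\Omega_k=\Omega_kP$, I compute
\begin{equation*}
Pw_k = P(w_{k-1}+\lambda_k\Omega_kw_{k-1}) = Pw_{k-1}+\lambda_k\Omega_kPw_{k-1} = w_{k-1}+\lambda_k\Omega_kw_{k-1}=w_k.
\end{equation*}
Hence $Pw_l=w_l$. Since normalization is a multiplication by a nonzero scalar and $P$ is linear, the eigenvector property is preserved, giving $P\widetilde w=\widetilde w$.

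There is no real obstacle here: the argument depends only on the commutation of $P$ with each $\Omega_k$ and the fact that the construction in Lemma~\ref{orthogonal} proceeds by taking real-linear combinations of $w_{k-1}$ and $\Omega_k w_{k-1}$. The only thing worth noting is that the scalars $\lambda_k$ are determined by scalar-product relations that do not depend on $P$ at all, so the inductive step requires no adjustment of the $\lambda_k$ chosen in the original construction.
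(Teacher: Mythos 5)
Your proof is correct and matches the paper's argument exactly: both proceed by induction through the recursion $w_k=w_{k-1}+\lambda_k\Omega_kw_{k-1}$, using linearity of $P$ and the relation $P\Omega_k=\Omega_kP$ to show $Pw_k=w_k$ at each step. Nothing further is needed.
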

 \begin{proof}
Let $w\in V$ satisfies $Pw=w$ and $\langle w,w\rangle_{V}=1$. Then for the vector $w_1=w+\lambda_1\Omega_1w$ we calculate
$
Pw_1=Pw+\lambda P\Omega_1w=w+\lambda\Omega_1 Pw=w_1
$.
Thus, we proceed further by induction and proof the Corollary.
\end{proof}

One of the principal parts in our construction is a presence of an operator having orthogonal eigenspaces. The following lemma describes some of them.

\begin{lemma}\label{rem:041}
Let $(V,\langle\cdot\,,\cdot\rangle_V)$ be a neutral scalar product space. Assume that an involution $P\colon V\to V$ is either symmetric or isometric operator. We denote its eigenspaces
$
E_+=\{v\in V\mid Pv=v\},\quad E_-=\{v\in V\mid Pv=-v\}.
$
Then the decomposition $P=E_+\oplus E_-$ is orthogonal with respect to $\langle\cdot\,,\cdot\rangle_V$.
\end{lemma}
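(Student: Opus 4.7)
The plan is to handle the two hypotheses on $P$ (symmetric or isometric) in parallel and reduce the claim to a two-line computation showing that any vector in $E_+$ is orthogonal to any vector in $E_-$. The assumption that $V$ is neutral is not actually needed for the proof; it simply reflects the setting in which the lemma will be applied.

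First, I would record the algebraic decomposition $V=E_+\oplus E_-$ as vector spaces. Since $P^2=\Id_V$, every $v\in V$ can be written as
\[
v=\tfrac{1}{2}(v+Pv)+\tfrac{1}{2}(v-Pv),
\]
where the first summand lies in $E_+$ and the second in $E_-$, and the intersection $E_+\cap E_-$ is trivial because a vector satisfying $Pv=v$ and $Pv=-v$ simultaneously must vanish. So the only content of the lemma is the orthogonality of this decomposition with respect to $\langle\cdot\,,\cdot\rangle_V$.

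Now I would pick $v\in E_+$ and $w\in E_-$ and show $\langle v,w\rangle_V=0$ in each of the two cases. If $P$ is symmetric, then
\[
\langle v,w\rangle_V=\langle Pv,w\rangle_V=\langle v,Pw\rangle_V=-\langle v,w\rangle_V,
\]
so $2\langle v,w\rangle_V=0$. If instead $P$ is an isometry, then
\[
\langle v,w\rangle_V=\langle Pv,Pw\rangle_V=\langle v,-w\rangle_V=-\langle v,w\rangle_V,
\]
again forcing $\langle v,w\rangle_V=0$. In either case $E_+\perp E_-$, and together with the first paragraph this gives the orthogonal decomposition $V=E_+\oplus_{\perp}E_-$.

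There is no real obstacle here; the only subtle point to flag is that even though the restrictions of $\langle\cdot\,,\cdot\rangle_V$ to $E_{\pm}$ need not be definite (and in general will themselves be of mixed signature because $V$ is neutral), the orthogonality of the sum is automatic from the eigenvalue equation alone, and the non-degeneracy of $\langle\cdot\,,\cdot\rangle_V$ on each $E_{\pm}$ then follows from the non-degeneracy on $V$ together with the orthogonal decomposition.
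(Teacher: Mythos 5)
Your proof is correct and uses exactly the same one-line computation as the paper in each of the two cases (symmetric versus isometric). The extra observations---that the direct-sum decomposition follows purely from $P^2=\Id_V$, and that the neutrality of $V$ plays no role in the orthogonality argument---are accurate and harmless additions.
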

\begin{proof} 
Let $w\in E_+$ and $v\in E_-$ be arbitrary vectors. If involution $P$ is symmetric then we argue as follows
$
\langle w,v\rangle_V=\langle Pw,v\rangle_V=\langle w,Pv\rangle_V=-\langle w,v\rangle_V$.

Let $P$ be an isometry. Then
$
\langle w,v\rangle_V=\langle Pw,Pv\rangle_V=-\langle w,v\rangle_V$,
where in the first equality we used the isometry property of $P$ and in the second the definition of eigenvectors. 
\end{proof}
 
From now on we fix the notation $E_\pm$ for eigenspaces of an involution $P$ corresponding to eigenvalues $\pm 1$. 
\begin{lemma}\label{lem:PT1}
Let $(V,\langle\cdot\,,\cdot\rangle_V)$ be a neutral scalar product space and $P\colon V \to V$ be an isometric involution. Then we have the following cases.
\begin{itemize} 
\item[1)] If a linear map $T\colon V\to V$ is an isometry such that $PT=-TP$, then each of eigenspaces 
$E_{\pm}$ of $P$ is a neutral scalar product space with respect to the 
restriction of the scalar product $\langle\cdot\,,\cdot\rangle_V$ on each of $E_{\pm}$.
\item[2)] If a linear map $T\colon V\to V$ is an anti-isometry such that  $PT=-TP$, 
then the restriction of $\langle\cdot\,,\cdot\rangle_V$ on each of
$E_{\pm}$ is non-degenerate neutral or sign definite,
\item[3)] If a linear map $T\colon V\to V$ is an anti-isometry 
such that $PT=TP$, then the restriction 
of $\langle\cdot\,,\cdot\rangle_V$ on each of $E_{\pm}$ is non-degenerate neutral.
\end{itemize}
\end{lemma}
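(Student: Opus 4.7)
The plan is to reduce each of the three cases to a signature count on the eigenspace decomposition supplied by $P$.

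The common first step is to invoke Lemma~\ref{rem:041} for the isometric involution $P$: this gives the orthogonal splitting $V=E_+\oplus_{\perp}E_-$. Since $V$ is neutral (hence non-degenerate) and the decomposition is orthogonal, the restrictions of $\langle\cdot\,,\cdot\rangle_V$ to both $E_+$ and $E_-$ are automatically non-degenerate. This disposes of the non-degeneracy claim in all three items simultaneously, leaving only the signature to be analyzed.

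Next I would record how $T$ interacts with the decomposition. The anticommutation $PT=-TP$ sends $\pm 1$-eigenvectors of $P$ to $\mp 1$-eigenvectors, so $T(E_\pm)\subseteq E_\mp$, while the commutation $PT=TP$ preserves eigenspaces, so $T(E_\pm)\subseteq E_\pm$. Bijectivity of $T$ upgrades both inclusions to isomorphisms between the indicated summands.

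For case 1, $T|_{E_+}\colon E_+\to E_-$ is an isometric linear isomorphism, so the restrictions are congruent; if both have signature $(p,q)$ then $V$ has signature $(2p,2q)$, and the neutrality of $V$ forces $p=q$, so each $E_\pm$ is neutral. For case 3, $T|_{E_+}\colon E_+\to E_+$ is an anti-isometric self-isomorphism, identifying $(E_+,\langle\cdot\,,\cdot\rangle_V)$ with $(E_+,-\langle\cdot\,,\cdot\rangle_V)$; comparing signatures $(p,q)$ and $(q,p)$ gives $p=q$, and the same argument handles $E_-$.

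Case 2 is the delicate one and is the main obstacle I expect. Here $T|_{E_+}\colon E_+\to E_-$ is an anti-isometric isomorphism, so if $E_+$ has signature $(p,q)$ then $E_-$ has signature $(q,p)$, and $V$ inherits the signature $(p+q,p+q)$ automatically. Thus neutrality of $V$ alone places no constraint on $(p,q)$, and the claimed dichotomy between non-degenerate neutral and sign-definite restrictions must come from finer structure. My plan would be to exploit the symmetric operator $T^2$ (which is bijective and commutes with $P$) together with any additional commutation relations available in the intended applications in order to decompose $E_+$ orthogonally into pieces on which the form is seen to be either neutral or of a single sign; this is the step I expect to require the most care.
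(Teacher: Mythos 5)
Your treatment of cases 1 and 3 is correct and is essentially the paper's argument: non-degeneracy of both restrictions comes from the orthogonal splitting $V=E_+\oplus_{\perp}E_-$ supplied by Lemma~\ref{rem:041}, and neutrality then follows because in case 1 the map $T|_{E_+}\colon E_+\to E_-$ is an isometric isomorphism (so the two restrictions are congruent and hence, by neutrality of $V$, each is neutral), while in case 3 the map $T|_{E_\pm}\colon E_\pm\to E_\pm$ is an anti-isometric self-isomorphism, identifying each restriction with its negative. The paper states this in the language of ``if $E_+$ were positive definite then so would $E_-$ be, contradicting neutrality of $V$,'' which is the same signature count you perform.

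Your suspicion about case 2 is legitimate and in fact points at a soft spot in the source. As pure linear algebra, the hypotheses $P$~isometric involution, $T$~anti-isometry, $PT=-TP$ only give that $T|_{E_+}\colon E_+\to E_-$ reverses signature, so $V$ is automatically neutral no matter what signature $(p,q)$ the restriction to $E_+$ has; nothing excludes, say, $(2,1)$. But the paper's own proof of case 2 does not claim to derive the dichotomy either: it only remarks that the restriction \emph{can} be sign definite, i.e.\ that the guaranteed neutrality of cases 1 and 3 is lost. In practice the paper uses case 2 as a caution rather than as a positive structure theorem, and whenever a concrete conclusion is needed under these hypotheses it brings in additional data: a second anti-isometry commuting with $P$ (Lemma~\ref{lem:PT2}) to restore neutrality, or the specific form $P=J_{i_1}J_{i_2}J_{i_3}J_{i_4}$ together with a sign flip (Lemma~\ref{lemma:04}) to obtain a unit eigenvector. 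So your instinct to reach for finer structure ($T^2$, further commutation relations) matches what the paper actually does downstream; you should not expect to close case 2 from the ingredients listed in the lemma alone, and you are right to flag that step as the genuine obstacle.
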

\begin{proof} 
To show the first statement we observe that the isometry $T$ acts as an isometry from $E_+$ to $E_-$.
Since the eigenspaces $E_{\pm}$ are orthogonal, the scalar product $\langle\cdot\,,\cdot\rangle_V$
restricted to each of $E_{\pm}$ is non-degenerate. If the scalar product
restricted to $E_+$ would be positive definite, then the scalar product
restricted to $E_-$ would be also positive definite, since the map $T$
is an isometry that contradicts the assumption that space
$(V,\langle\cdot\,,\cdot\rangle_V)$ is neutral. The same arguments
shows that the restriction to $E_+$ could not be negative definite. 
So the scalar product restricted to $E_+$ and therefore to $E_-$
should be neutral.

In order to prove the second statement, we observe that since 
$T\colon E_+\to E_-$ is an anti-isometry, the restriction of 
$\langle\cdot\,,\cdot\rangle_V$ to $E_+$ can be sign definite 
and the restriction of $\langle\cdot\,,\cdot\rangle_V$ to $E_-$ 
will have opposite sign due to neutral nature of $(V,\langle\cdot\,,\cdot\rangle_V)$.

In the third case since the eigenspaces $E_{\pm}$ are invariant under
$T$ 
but contains spacelike and timelike vectors, 
they decompose into subspaces of equal 
dimensions where the restriction of $\langle\cdot\,,\cdot\rangle_V$ sign definite but of opposite signs.
\end{proof}

Note that assumptions made in the case 2 of Lemma~\ref{lem:PT1}, 
does not guarantee the existence of $w\in V$ such that 
\begin{equation}\label{eq:norm_eigen}
Pw=w,\qquad\langle w,w\rangle_V=1.
\end{equation}
The following lemma contains a benchmark example for our work, describing one of possible solutions of this problem. 

\begin{lemma}\label{lemma:04}
Let $(V,\langle\cdot\,,\cdot\rangle_V)$ be an admissible $\Cl_{r,s}$-module, $z_{1},\ldots,z_{r+s}$  orthonormal generators of the Clifford algebra $\Cl_{r,s}$, and $J_{z_i}\in\End(V)$, $i=1,\ldots, r+s$ are representations for the Clifford algebra. Assume that
$P=J_{z_{i_1}}J_{z_{i_2}}J_{z_{i_3}}J_{z_{i_4}}$, 
$i_1\neq i_2\neq i_3\neq i_4$, 
is an isometric involution and $T\colon V\to V$ is an anti-isometry such that $PT=-TP$. Then there is a vector $w\in V$ satisfying~\eqref{eq:norm_eigen}
or $P$ can be modified to other isometric involution $\hat P$ such that~\eqref{eq:norm_eigen} holds for $\hat P$.
\end{lemma}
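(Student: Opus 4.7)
My plan is to exhaust the trichotomy supplied by case 2 of Lemma~\ref{lem:PT1} and to repair the one bad subcase by a sign-flip of $P$, which can be realized concretely by permuting two of the Clifford factors.

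First, by Lemma~\ref{lem:PT1}(2) the restriction of $\langle\cdot\,,\cdot\rangle_V$ to the eigenspace $E_+$ of the isometric involution $P$ is either non-degenerate neutral, positive definite, or negative definite, and the anti-isometry $T\colon E_+\to E_-$ forces $E_-$ to carry the opposite character (neutral, negative definite, or positive definite, respectively). In the first two cases $E_+$ contains a spacelike vector $u$, and $w:=u/\sqrt{\langle u,u\rangle_V}$ satisfies $Pw=w$ and $\langle w,w\rangle_V=1$, so~\eqref{eq:norm_eigen} is immediate.

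The only obstruction is the third subcase, when $\langle\cdot\,,\cdot\rangle_V|_{E_+}$ is negative definite and $\langle\cdot\,,\cdot\rangle_V|_{E_-}$ is positive definite. To get around it, I would define
\[
\hat P \;=\; J_{z_{i_2}}J_{z_{i_1}}J_{z_{i_3}}J_{z_{i_4}}.
\]
Since the four indices are distinct, the anti-commutation $J_{z_{i_1}}J_{z_{i_2}}=-J_{z_{i_2}}J_{z_{i_1}}$ gives $\hat P=-P$. Consequently $\hat P$ is again an isometric involution (isometry and the property $\hat P^2=\Id_V$ are preserved by the global sign), the relation $\hat P T=-T\hat P$ continues to hold, and the $+1$ eigenspace of $\hat P$ is exactly $E_-$ of $P$. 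Picking any non-zero $u\in E_-$ and normalising gives a vector $w$ with $\hat P w=w$ and $\langle w,w\rangle_V=1$, which is~\eqref{eq:norm_eigen} for the modified involution.

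The main (and essentially only) obstacle is recognising that negative-definiteness of $E_+$ is unavoidable in general and cannot be cured by a choice inside $E_+$ itself: one really needs to swing to the opposite eigenspace. The pleasant point is that passing to $-P$ costs nothing structurally, since within the Clifford algebra $-P$ is represented by transposing two of the generators in the defining product, so the replacement $P\rightsquigarrow\hat P$ is still of the form required elsewhere in the paper (a length-four product of distinct $J_{z_i}$). All subsequent constructions that use $P$ through its $+1$-eigenspace can therefore proceed with $\hat P$ in place of $P$ without any further adjustment.
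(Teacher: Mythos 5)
Your proof is correct and takes essentially the same route as the paper: invoke Lemma~\ref{lem:PT1}(2), observe that a spacelike unit vector exists in $E_+$ when the restriction is positive definite or neutral, and in the negative-definite case pass to $\hat P=J_{z_{i_2}}J_{z_{i_1}}J_{z_{i_3}}J_{z_{i_4}}=-P$, whose $+1$-eigenspace is the old $E_-$. The paper also records up front that such a length-four product of $J_{z_i}$'s is automatically symmetric (a fact Lemma~\ref{rem:041} then makes available), but the core argument matches yours.
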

\begin{proof}
First we notice that operator $P=J_{z_{i_1}}J_{z_{i_2}}J_{z_{i_3}}J_{z_{i_4}}$, $i_1\neq i_2\neq i_3\neq i_4$ is also symmetric.

We apply Lemma~\ref{lem:PT1}, item 2). If the restriction of $\langle\cdot\,,\cdot\rangle_V$ on $E_+$ is positive definite or neutral, then we are done. If the restriction is negative definite, then we define the operator $\hat P=J_{z_{i_2}}J_{z_{i_1}}J_{z_{i_3}}J_{z_{i_4}}$ and denote by $\hat E_{\pm}$ its eigenspaces corresponding to eigenvalues $\pm 1$. Thus if $w\in E_+$, then $\hat Pw=-Pw=-w$ and therefore $w\in\hat E_-$. Continuing to argue in the same way, we conclude that $E_+=\hat E_-$ and $E_-=\hat E_+$. So, we change the operator $P$ and its eigenspaces $E_\pm$ to the operator $\hat P$ and the corresponding eigenvectors $\hat E_\mp$ to satisfy~\eqref{eq:norm_eigen}. 
\end{proof}

To ensure existence of $w\in V$ satisfying~\eqref{eq:norm_eigen} for a general isometric involution $P$ we need to have one more operator commuting with $P$. More precisely we state the following generalisation of Lemma~\ref{lem:PT1}. 

\begin{lemma}\label{lem:PT2}
Let $(V,\langle\cdot\,,\cdot\rangle_V)$ be a neutral scalar product space.
Let $P$ be an isometric involution and assume that there are two anti-isometric operators $T_i\colon V\to V$, $i=1,2$, such that
$$
T_1P=-PT_1,\quad\text{and}\quad T_2P=PT_2.
$$
Then the eigenspaces $E_{\pm}$ of $P$ are non-trivial and the scalar product $\langle\cdot\,,\cdot\rangle_V$
restricted to each of $E_{\pm}$ is non-degenerate and neutral.
\end{lemma}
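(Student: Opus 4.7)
The plan is to combine Lemmas~\ref{rem:041} and~\ref{lem:PT1} with one extra dimension-count argument. First, since $P$ is an isometric involution, Lemma~\ref{rem:041} supplies the orthogonal decomposition $V = E_+ \oplus_{\perp} E_-$; orthogonality together with the non-degeneracy of $\langle\cdot\,,\cdot\rangle_V$ on $V$ then forces the restrictions of the scalar product to $E_+$ and to $E_-$ to be non-degenerate, giving that half of the conclusion for free.

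Next I would use $T_2$ to obtain neutrality on each eigenspace. Because $T_2$ commutes with $P$ and is bijective (anti-isometries being bijective by the paper's definition), it restricts to a bijection of each $E_\pm$, and this restriction is still an anti-isometry of the induced scalar product. A self-anti-isometry of a scalar product space sends any maximal positive definite subspace onto a negative definite subspace of the same dimension and vice versa, so the signature of each $E_\pm$ must be neutral---provided $E_\pm$ is non-trivial. This is essentially case~3 of Lemma~\ref{lem:PT1} applied to $T_2$.

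The remaining point is non-triviality of $E_\pm$, which is where $T_1$ is used. Since $T_1 P = -P T_1$, the map $T_1$ sends $E_+$ into $E_-$ and $E_-$ into $E_+$; as $T_1$ is bijective, these restrictions are themselves bijections, so $\dim E_+ = \dim E_-$. If $E_+ = \{0\}$, then also $E_- = \{0\}$ and hence $V = \{0\}$, contradicting the assumption that $V$ is a genuine non-degenerate scalar product space. I do not anticipate a serious obstacle here; the only mildly delicate point is to verify cleanly that the existence of a self-anti-isometry forces a neutral signature, but this is a one-line maximal-positive-subspace argument and in any case can be extracted directly from the proof of Lemma~\ref{lem:PT1}.
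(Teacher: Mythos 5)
Your proof is correct, but it is organized differently from the paper's. The paper first invokes case~2 of Lemma~\ref{lem:PT1} with the \emph{anticommuting} anti-isometry $T_1$ to conclude that the restriction to each $E_\pm$ is either neutral or sign-definite, and then uses the \emph{commuting} anti-isometry $T_2$ to rule out the sign-definite alternative (since $T_2$ preserves $E_\pm$ and, being an anti-isometry, produces both spacelike and timelike vectors there). You instead jump straight to case~3 of Lemma~\ref{lem:PT1} using $T_2$ alone, which immediately yields non-degenerate neutrality on each $E_\pm$, and you reserve $T_1$ for a different job: showing that $T_1$ swaps $E_+$ and $E_-$ bijectively, hence $\dim E_+ = \dim E_-$, hence both are non-trivial whenever $V$ is. Both arguments are valid; yours is slightly more economical for the neutrality claim, and it has the merit of explicitly proving the non-triviality of $E_\pm$, which the statement asserts but the paper's proof leaves implicit. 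The paper's version, on the other hand, shows more transparently how the two complementary operators contribute in tandem to pinning down the signature.
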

\begin{proof}
We only need to explain why the scalar product $\langle\cdot\,,\cdot\rangle_V$ restricted to each of $E_{\pm}$
is neutral. The non-degeneracy of the restriction of $\langle\cdot\,,\cdot\rangle_V$ to $E_{\pm}$ is shown as in Lemma~\ref{lem:PT1}.

The presence of the operator $T_1$ ensures that the restriction of $\langle\cdot\,,\cdot\rangle_V$ to $E_{\pm}$ is neutral or sign definite. Actually the restriction of $\langle\cdot\,,\cdot\rangle_V$ to $E_{+}$ can not be sign definite. Indeed, since $T_2$ preserves $E_{+}$ and it is an anti-isometry, the space $E_{+}$ contains both spacelike and timelike vectors forming subspaces of an equal dimension as was shown in the proof of Proposition~\ref{prop:1}. The same arguments, applied to $E_-$, finish the proof.
\end{proof}

\begin{definition}
Let $P_1,\ldots, P_m$ be isometric mutually commuting involutions defined on a neutral scalar product space $(V,\langle\cdot\,,\cdot\rangle_V)$. Then the collection $T_1,\ldots, T_m, T_{m+1}$ of linear operators on $V$ is called complementary operators to the family $P_1,\ldots, P_m$ if 
$$
\begin{array}{lllllll}
&P_1T_1=-T_1P_1, & P_1T_2=T_2P_1, &\ldots & P_1T_m=T_mP_1, & P_1T_{m+1}=T_{m+1}P_1,
\\
&&P_2T_2=-T_2P_2,&\ldots& P_2T_m=T_mP_2, &P_2T_{m+1}=T_{m+1}P_2,
\\
&&&&\vdots
\\
&&&&P_mT_m=-T_mP_m,& P_mT_{m+1}=T_{m+1}P_m.
\end{array}
$$ 
\end{definition}
\begin{remark}
In some of situations the operator $T_{m+1}$ can be omitted, but we still call the system of operators $T_1,\ldots, T_m$ complementary to  $P_1,\ldots, P_m$.
\end{remark}

Based on Lemmas~\ref{lem:PT1} and~\ref{lem:PT2}
we construct an integral structure 
by giving an explicit simultaneous eigenspace decomposition of a given
admissible module by a family of isometric involutions and their complementary operators.
Simultaneously, we calculate the dimension of the minimal admissible modules for all
cases.


\section{Isomorphism preserving  admissibility}\label{sec:00n}

There are several types of isomorphisms between Clifford
algebras. Among them the periodicity with the period $8$
$$
\Cl_{r+8,s}\cong \Cl_{r+4,s+4}\cong\Cl_{r,s}\otimes\mathbb R(16),\qquad
\Cl_{r,s+8}\cong \Cl_{r+4,s+4}\cong\Cl_{r,s}\otimes\mathbb R(16)
$$
are basic and used 
to construct an integral basis for all
the cases $\Cl_{r,s}$ after we prove the existence 
of the integral basis for $\Cl_{r,s}$ of $0\leq r,s\leq 7$ 
and $\Cl_{8,0}\cong\mathbb{R}(8)$, $\Cl_{0,8}\cong\mathbb{R}(8)$, 
see Theorems~\ref{th:rs8},~\ref{th:r8s} and~\ref{th:r4s4}. 

Not all isomorphisms of the Clifford algebras lead 
to the isometric admissible modules, for instance, the isomorphism $\Cl_{r,s+4}\cong \Cl_{s,r+4}$ does not
preserve the admissibility in general, since in particular, 
the isomorphism $\Cl_{0,4}\cong \Cl_{4,0}$
does not directly give us a required scalar product from the positive one for $\Cl_{4,0}$-module to
a neutral one for $\Cl_{0,4}$-module. 
In this section we show that the isomorphism $\Cl_{r,s+1}\cong \Cl_{s,r+1}$
preserves the admissibility. 

We recall an isomorphism between $\Cl_{r,s+1}$ and $\Cl_{s,r+1}$. It
is given as follows:
let $z_1,\ldots,z_{r}$, $ \zeta_1,\ldots,\zeta_{s+1}$ be the
orthonormal generators of $\Cl_{r,s+1}$ with the property
\[
\langle z_i,z_i\rangle_{\mathbb R^{r,s+1}}=1,\qquad\langle \zeta_j,\zeta_j\rangle_{\mathbb R^{r,s+1}}=-1.
\]
Likewise let $a_1,\ldots,a_{s},b_1,\ldots,b_{r+1}$ be orthonormal generators of
$\Cl_{s,r+1}$ with
$$
\langle a_i,a_i\rangle_{\mathbb R^{s,r+1}}=1,\qquad\langle b_j,b_j\rangle_{\mathbb R^{s,r+1}}=-1.
$$
We define a correspondence $\Phi\colon\mathbb{R}^{r,s+1}\to \Cl_{s,r+1}$ by
$$
\begin{array}{lllll}
&z_1 \longmapsto b_1b_{r+1},\qquad & \zeta_1\longmapsto a_1b_{r+1},\\
& z_2 \longmapsto b_2b_{r+1}, &\cdots\qquad \cdots,\\
&\cdots\qquad\cdots,&\zeta_s\longmapsto a_sb_{r+1},\\
&z_r\longmapsto  b_rb_{r+1},&\zeta_{s+1}\longmapsto b_{r+1}. 
\end{array}
$$
Then we have $\Phi(z_i)^2=-1$, $\Phi(\zeta_j)^2=1$ and 
\begin{align*}
&\Phi(z_i)\Phi(\zeta_j)+\Phi(\zeta_j)\Phi(z_i)=0\quad\text{for any}\quad i=1,\ldots,r,\ \  j=1,\ldots s,\qquad~\text{and}\\
&\Phi(z_{i_1})\Phi(z_{i_2})+\Phi(z_{i_2})\Phi(z_{i_1})=0,\ \
\Phi(\zeta_{j_1})\Phi(\zeta_{j_2})+\Phi(\zeta_{j_2})\Phi(\zeta_{j_1})=0\ \text{for any}\ 
i_k\not=i_l,\  j_k\not=j_l.
\end{align*}
Since the vectors
$\{\Phi(z_i),\Phi(\zeta_j)\}_{i,j}$
are linearly independent, one can extend the isomorphism to an isomorphism
of the Clifford algebras $\Cl_{r,s+1}$ and $\Cl_{s,r+1}$.

\begin{theorem}\label{prop:7}
Let $(V,\langle\cdot\,,\cdot\rangle_V)$ be an admissible module of the Clifford algebra
$\Cl_{s,r+1}$ and $J\colon\Cl_{s,r+1}\to \End (V)$ its representation. Then the Clifford module $(V,\langle\cdot\,,\cdot\rangle_V)$ with representation
$$J\circ\Phi\colon\Cl_{r,s+1}\to \End(V)$$
is admissible. 
\end{theorem}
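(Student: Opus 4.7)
The plan is to verify directly that $J\circ\Phi$ satisfies the skew-symmetry condition~\eqref{eq:J_skew} with respect to the given scalar product $\langle\cdot\,,\cdot\rangle_V$, which by the Definition of admissible module is precisely what is needed. Since the construction of $\Phi$ preceding the statement already shows that $\Phi$ extends to an algebra isomorphism $\Cl_{r,s+1}\to\Cl_{s,r+1}$, the composition $J\circ\Phi\colon\Cl_{r,s+1}\to\End(V)$ is automatically a representation, i.e.\ the Clifford relation $(J\circ\Phi)(w)^2=-\langle w,w\rangle_{\mathbb R^{r,s+1}}\Id_V$ holds for all $w\in\mathbb R^{r,s+1}$. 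By the remark in the introduction that among~\eqref{eq:J_orth}, \eqref{eq:J_Clif} and~\eqref{eq:J_skew} any two imply the third, it then suffices to verify~\eqref{eq:J_skew} for $J\circ\Phi$.

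By linearity, skew-symmetry need only be checked on a set of generators of $\mathbb R^{r,s+1}$. I would treat $\zeta_{s+1}$ separately: here $(J\circ\Phi)(\zeta_{s+1})=J_{b_{r+1}}$, and since $b_{r+1}$ is an orthonormal generator of $\Cl_{s,r+1}$ and $V$ is admissible as a $\Cl_{s,r+1}$-module, $J_{b_{r+1}}$ is already skew-symmetric with respect to $\langle\cdot\,,\cdot\rangle_V$. The remaining generators $z_i$ ($i=1,\ldots,r$) and $\zeta_j$ ($j=1,\ldots,s$) are all mapped by $\Phi$ to a product of two distinct orthonormal generators of $\Cl_{s,r+1}$, namely $b_ib_{r+1}$ or $a_jb_{r+1}$. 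Thus they can be handled in a single computation: for any two distinct orthonormal generators $c,c'$ of $\Cl_{s,r+1}$ and any $u,v\in V$,
\[
\langle J_cJ_{c'}u,v\rangle_V=-\langle J_{c'}u,J_cv\rangle_V=\langle u,J_{c'}J_cv\rangle_V=-\langle u,J_cJ_{c'}v\rangle_V,
\]
where the first two equalities use skew-symmetry of $J_c$ and $J_{c'}$ guaranteed by admissibility of $V$ as a $\Cl_{s,r+1}$-module, and the last equality uses the anti-commutation $J_{c'}J_c=-J_cJ_{c'}$ valid for distinct orthonormal generators. Applying this to $(c,c')=(b_i,b_{r+1})$ and $(c,c')=(a_j,b_{r+1})$ gives the skew-symmetry of $(J\circ\Phi)(z_i)$ and $(J\circ\Phi)(\zeta_j)$ respectively.

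Since skew-symmetry is now established on a generating set of $\mathbb R^{r,s+1}$, it extends by linearity to every $w\in\mathbb R^{r,s+1}$, so $\langle\cdot\,,\cdot\rangle_V$ is an admissible scalar product for the representation $J\circ\Phi$, completing the proof. There is no serious obstacle here; the only point requiring care is to notice that each $\Phi(z_i)$ and $\Phi(\zeta_j)$ ($j\le s$) is a product of \emph{distinct} generators (so that anti-commutativity applies), and that $\Phi(\zeta_{s+1})$, being a single generator, must be handled separately.
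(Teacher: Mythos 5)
Your proof is correct and takes essentially the same approach as the paper: the paper also verifies skew-symmetry of $J_{\Phi(z_i)}=J_{b_i}J_{b_{r+1}}$ by the identical four-step chain (move $J_{b_i}$, move $J_{b_{r+1}}$, anti-commute) and dismisses the remaining generators with ``the rest of the cases can be shown in a similar way.'' You merely spell out those remaining cases — including the degenerate case $\Phi(\zeta_{s+1})=b_{r+1}$ — a bit more explicitly.
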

\begin{proof}
The skew symmetry condition
\[
\langle J_{\Phi(z_i)}v,w\rangle_V+\langle v,J_{\Phi(z_i)}w\rangle _V=0 \quad\text{for any}\quad v,w\in V
\]
holds by the following 
\begin{align*}
\langle J_{\Phi(z_i)}v,w\rangle_V&= \langle J_{b_i}J_{b_{r+1}}v,w\rangle_V
=-\langle J_{b_{r+1}}v,J_{b_i}w\rangle_V
\\
&= \langle v,J_{b_{r+1}}J_{b_i}w\rangle_V
=-\langle v,J_{b_{i}}J_{b_{r+1}}w\rangle_V
= -\langle v,J_{\Phi(z_i)}w\rangle_V\quad\text{for}\quad i\leq r.
\end{align*}
The rest of the cases can be shown in a similar way.
\end{proof}

\begin{corollary}\label{cor:3}
If an integral basis for an admissible $\Cl_{s,r+1}$-module satisfies the assumptions of
Lemma~\ref{prop:6} and 
Corollary~\ref{cor:2-1}, 
then it is also
an integral basis of the admissible $\Cl_{r,s+1}$-module.
\end{corollary}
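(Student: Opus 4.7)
The plan is to combine Theorem~\ref{prop:7}, which already guarantees that $(V,\langle\cdot\,,\cdot\rangle_V)$ is an admissible $\Cl_{r,s+1}$-module via the representation $J\circ\Phi$, with a direct check that the orthonormal basis $\{v_\alpha\}$ continues to satisfy the signed-permutation hypothesis of Corollary~\ref{cor:2-1} when one switches from the generators of $\Cl_{s,r+1}$ to those of $\Cl_{r,s+1}$. Since the underlying vector space $V$, its scalar product, and its orthonormal basis $\{v_\alpha\}$ are all unchanged under $\Phi$, the whole argument reduces to understanding how the Clifford generators transform.

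The key observation is that $\Phi$ sends every orthonormal generator of $\Cl_{r,s+1}$ to a word of length at most two in the orthonormal generators of $\Cl_{s,r+1}$; explicitly, $\Phi(z_i)=b_ib_{r+1}$, $\Phi(\zeta_j)=a_jb_{r+1}$ for $j\le s$, and $\Phi(\zeta_{s+1})=b_{r+1}$. By the integral-basis hypothesis, each of the operators $J_{a_j}$ and $J_{b_k}$ permutes $\{v_\alpha\}$ up to sign: $J_{a_j}v_\alpha=\pm v_{\sigma_j(\alpha)}$ and $J_{b_k}v_\alpha=\pm v_{\tau_k(\alpha)}$ for suitable permutations $\sigma_j,\tau_k$ of the index set. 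The composition of two signed permutations is again a signed permutation, so each of
$$J_{\Phi(z_i)}=J_{b_i}J_{b_{r+1}},\quad J_{\Phi(\zeta_j)}=J_{a_j}J_{b_{r+1}}\ (j\le s),\quad J_{\Phi(\zeta_{s+1})}=J_{b_{r+1}}$$
also permutes $\{v_\alpha\}$ up to sign.

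Applying Corollary~\ref{cor:2-1} to the admissible $\Cl_{r,s+1}$-module $(V,\langle\cdot\,,\cdot\rangle_V)$ with the orthonormal basis $\{v_\alpha\}$ and orthonormal generators $z_1,\ldots,z_r,\zeta_1,\ldots,\zeta_{s+1}$ then yields structure constants $A^k_{\alpha\beta}\in\{\pm 1, 0\}$, which is precisely the defining property of an integral basis. I do not foresee any genuine obstacle: the only nontrivial ingredient is Theorem~\ref{prop:7}, which is already at our disposal, and the remainder reduces to the elementary fact that signed permutations are closed under composition. Alternatively, one can verify the conclusion directly via~\eqref{eq:AB}, observing that $\langle J_{\Phi(\xi)}v_\alpha,v_\beta\rangle_V$ simply reads off an entry of a signed permutation matrix and so lies in $\{\pm 1,0\}$.
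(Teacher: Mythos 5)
Your proof is correct and essentially matches the paper's: both hinge on the fact that $\Phi$ carries each orthonormal generator of $\Cl_{r,s+1}$ to a product of at most two orthonormal generators of $\Cl_{s,r+1}$, whose representing operators permute the basis up to sign, whence so does the composite. The paper expresses this as a one-line scalar-product computation (moving one factor across via skew symmetry, which is precisely your ``alternative'' check through~\eqref{eq:AB}), while you phrase it as closure of signed permutations under composition before invoking Corollary~\ref{cor:2-1}; the two routes are interchangeable.
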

\begin{proof}
Indeed, let $\{v_{\alpha}\}$ be an integral basis for $\Cl_{s,r+1}$-module, then
\begin{align*}
\langle J_{\Phi(z_i)}v_\alpha,v_{\beta}\rangle_V=\langle J_{b_i}J_{b_{r+1}}v_\alpha,v_\beta\rangle_V=-\langle J_{b_{r+1}}v_{\alpha},J_{b_{i}}v_{\beta}\rangle_V=\pm 1\ \text{or}\ 0,
\end{align*}
since the vectors $J_{b_{r+1}}v_{\alpha}$ and $J_{b_{i}}v_{\beta}$ are also
ones of the basis vectors up to sign by the assumption.
\end{proof}

{\it This Lemma~\ref{prop:6} and Corollary~\ref{cor:3} allows to reduce the construction of integral bases from $64$ to $42$ cases, and
we shall construct such bases for the Clifford algebras
\[
\Cl_{r,s},\quad\text{for}\quad 0\leq r\leq s\leq 7\qquad\text{and}\qquad\Cl_{r,r+1}\quad \text{for}\quad r=0,\ldots,6. 
\] }

 
 \section{Integral structure on admissible $\Cl_{0,s}$-modules }\label{sec:0n}
 

In this section we present admissible integral $\Cl_{0,s}$-modules for
$s=1,\ldots,8$, constructing them directly. 


\subsection{Integral structure on admissible $\Cl_{0,1}$-module}


In this case the Clifford algebra is isomorphic 
to the space $\mathbb R\oplus \mathbb R$, where the isomorphism is given by
$$
1_{\Cl_{0,1}}\mapsto (1,1),\qquad z_1\mapsto (1,-1)
$$
and then it is continued to the algebra isomorphism.
The generator $z_1$ of $\Cl_{0,1}$ 
satisfies $\langle z_1,z_1\rangle_{\mathbb R^{0,1}}=-1$.  Let $(V,\langle \cdot\,,\cdot\rangle_{V})$ be 
an admissible module and the endomorphism
$J_{z_1}$ be such that $J^2_{z_1}=\Id_V$. 
We pick up $w\in V$ with $\langle w,w\rangle_{V}=1$. Then 
$
\langle J_{z_1}w,w\rangle_{V}=0,
$
that gives us two linearly independent vectors. We can choose the basis 
$$
v_1=w,\qquad v_2=J_{z_1}w.
$$ Then, since
$$
\langle v_1,v_1\rangle_{V}=1,\quad \langle v_2,v_2\rangle_{V}
=\langle z_1,z_1\rangle_{\mathbb R^{0,1}}\langle w,w\rangle_{V}=-1
$$
the basis is orthonormal and we obtain one non-vanishing relation
$
\langle J_{z_1}v_1,v_2\rangle_{V}=-1$.
It gives the following commutation relation
$$
\langle J_{z_1}v_1,v_2\rangle_V=\langle z_1,[v_1,v_2]\rangle_{\mathbb R^{0,1}}\quad\Longrightarrow\quad [v_1,v_2]=z_1.
$$ 
This Lie algebra $\big(\mathbb{R}^{1,1}\oplus_{\perp}\mathbb{R}^{0,1},[\cdot\,,\cdot]\big)$ 
is the Heisenberg algebra. The natural choice of the metric of index
$(1,1)$ on the space $V\cong \mathbb{R}^{1,1}$ 
leads to studying of the Lorentzian Heisenberg groups, see~\cite{G, KM}. 
The constructed admissible module is of minimal dimension, 
but it is not irreducible, since the irreducible module has dimension 1.

\begin{remark}\label{rem:2}
Let $u=w+J_{z_1}w$, then $u\not=0$ and $\langle{ u},u\rangle_V=0$, also
$J_{z_1}u=u$. Hence the subspace $W=\spn\{u\}$ of $V$
is a $\Cl_{0,1}$-module with the trivial scalar product.
So, in general a sub-module of an admissible $\Cl_{r,s}$-module need not be
an admissible module, as was observed at the beginning of Section~\ref{sec:aux}. 
\end{remark}

\begin{remark}\label{three dim Heisenberg}
The Clifford algebra $\Cl_{1,0}$ is isomorphic to the field $\mathbb{C}$ of complex numbers
 and the corresponding H-type Lie algebra $\big(\mathbb{R}^{2,0}\oplus_{\perp}\mathbb{R}^{1,0},[\cdot\,,\cdot]\big)$ is also
three dimensional Heisenberg algebra with the Euclidean metric as a
natural choice on the horizontal space. 
We see that different Clifford algebras $\Cl_{1,0}$ and $\Cl_{0,1}$ 
lead to different general $H$-type Lie algebras, having isomorphic 
underlying Lie algebras, if we discard the presence of the metric.
\end{remark}


\subsection{Integral structure on admissible $\Cl_{0,2}$-module }


The Clifford algebra $\Cl_{0,2}$ is isomorphic to the space 
$\mathbb R(2)$ of $(2\times 2)$-matrices with real entries.
Let $z_1,z_2$ be generators of $\Cl_{0,2}$ with $\langle
z_i,z_i\rangle_{\mathbb R^{0,2}}=-1$ 
and $(V,\langle \cdot\,,\cdot\rangle_{V})$ an admissible $\Cl_{0,2}$-module. Then 
$
J_i^2:=J^2_{z_i}=\Id_V$. Choose $w\in V$ such that $\langle w,w\rangle_{V}=1$. 
The basis 
$$
v_1=w,\quad v_2=J_1J_2w,\quad v_3=J_1w,\quad v_4=J_2w
$$ is orthonormal 
and satisfies 
$
\langle v_1,v_1\rangle_{V}=\langle v_2,v_2\rangle_{V}=-\langle v_3,v_3\rangle_{V}=-\langle v_4,v_4\rangle_{V}=1$.
This implies that the scalar product restricted to the subspace spanned
by the four vectors $\{v_1,v_2,v_3,v_4\}$ is non-degenerate.
Moreover, since the action of $J_j$, $j=1,2$, on the basis
$\{v_{\alpha}\}_{\alpha=1}^4$ 
permutes this basis up to sign, the basis is integral that gives the following non-vanishing commutation relations
$$
[v_1,v_3]=[v_2,v_4]=z_1.\qquad [v_1,v_4]=-[v_2,v_3]=z_2.
$$

\begin{remark}\label{rem:4}
Let $u_1=v_1+v_3$ and $u_2=v_2-v_4$, then
\[
J_1u_1=u_1,\ \  J_1u_2=-u_2,\ ~\text{and}~\  J_2u_1=u_2,\ \  J_2u_2=-u_1.
\]
The scalar product on the subspace $W=\spn\{u_1,u_2\}$ vanishes, 
so the irreducible sub-module $W$ is not admissible. 
\end{remark}

We emphasise that 
the construction of admissible integral modules 
some times gives the irreducible admissible
integral module, 
but in other cases the resulting module exceeds the 
dimension of the irreducible module twice.


\subsection{Integral structure on admissible $\Cl_{0,3}$-module}


The Clifford algebra $\Cl_{0,3}$ is isomorphic to the space 
$\mathbb C(2)$ of $(2\times 2)$-matrices with complex entries.
Let $z_1,z_2,z_3$ be generators of $\Cl_{0,3}$ with 
$\langle z_i,z_i\rangle_{\mathbb R^{0,3}}=-1$, $i=1,2,3$, and $(V,\langle \cdot\,,\cdot\rangle_{V})$ an admissible
$\Cl_{0,3}$-module. Then 
$
J_i^2:=J^2_{z_i}=\Id_V$, $ i=1,2,3$.
Choose $w\in V$ such that 
$\langle w,w\rangle_{V}=1$. 
In general, the scalar product
$\langle w, J_1J_2J_3w\rangle_V$ does not vanish. Nevertheless, 
since the operator $\Omega=J_1J_2J_3$ satisfies the conditions of 
Lemma~\ref{orthogonalization} we can choose the initial vector making 
the product $\langle w, J_1J_2J_3w\rangle_V$ equal to $0$.
Fix such a vector $w\in V$ and pick up the vectors 
$$
\begin{array}{llllll}
&v_1=w,\quad &v_2=J_1J_2w,\quad &v_3=J_1J_3w,\quad &v_4=J_2J_3w,
\\
&v_5=J_1w,\quad &v_6=J_2w,\quad &v_7=J_3w,\quad &v_8=J_1J_2J_3w,
\end{array}
$$
which satisfy $\langle v_{\alpha},v_{\alpha}\rangle_{V}=1$, $\alpha=1,2,3,4$ and $\langle v_{\alpha},v_{\alpha}\rangle_{V}=-1$ for
$\alpha=5,6,7,8$. 
Moreover, since we have 
$$
0=\langle v_{1},v_{8}\rangle_{V}=-\langle v_{2},v_{7}\rangle_{V}=\langle v_{3},v_{6}\rangle_{V}=-\langle v_{4},v_{5}\rangle_{V}
$$
the basis $\{v_1,\ldots,v_8\}$ is orthonormal with respect
to the scalar product $\langle\cdot\,,\cdot\rangle_V$ by the choice of 
initial vector $w$ and admissibility of the scalar product. 

It is clear that the action of $J_j$, $j=1,2,3$, on the basis $\{v_{\alpha}\}_{\alpha=1}^8$ permutes it up to the sign. 
Thus we conclude $\langle J_jv_\alpha,v_\beta\rangle_{V}=\pm 1$ or 
$\langle J_jv_\alpha,v_\beta\rangle_{V}=0$ for any $j=1,2,3$ and any
$\alpha,\beta=1,\ldots,8$. 
Precisely, we obtain the following non-vanishing commutators 
$$
[v_1,v_5]=[v_2,v_6]=[v_3,v_7]=[v_4,v_8]=z_1,\quad [v_1,v_6]=-[v_2,v_5]=-[v_3,v_8]=[v_4,v_7]=z_2
$$
$$
[v_1,v_7]=[v_2,v_8]=-[v_3,v_5]=-[v_4,v_6]=z_3.
$$
We apply Proposition~\ref{prop:1}, part (2) and note, that $W=\spn\{v_{1},\ldots, v_8\}$ 
is invariant under the action of the algebra $\Cl_{0,3}$ 
and the scalar product $\langle\cdot\,,\cdot\rangle_V$ restricted to
$W$
(we denote it by  $\langle\cdot\,,\cdot\rangle_W$)
is non-degenerate. So we constructed an admissible sub-module
$(W, \langle\cdot\,,\cdot\rangle_W)$. The orthogonal complement
$W^{\perp}=\{x\in V~|~ \langle x, v\rangle_V=0~\text{for any}~v\in W\}$
is also invariant under the action of the Clifford algebra $\Cl_{0,3}$ and 
the scalar product restricted to $W^{\perp}$ is non-degenerate. This procedure
implies
that the given admissible $\Cl_{0,3}$-module
$(V,\langle\cdot\,,\cdot\rangle)$
is decomposed into a finite sum of the minimal $8$-dimensional admissible
modules $(W,\langle\cdot\,,\cdot\rangle_{W})$ of $\Cl_{0,3}$.


\subsection{Integral structure on admissible $\Cl_{0,4}$-module}
\label{Cl04}


The Clifford algebra $\Cl_{0,4}$ is isomorphic to the space $\mathbb H(2)$.
Let $z_j$, $j=1,2,3,4$ be generators 
of $\Cl_{0,4}$ with $\langle z_j,z_j\rangle_{\mathbb R^{0,4}}=-1$, 
and $(V,\langle \cdot\,,\cdot,\rangle_{V})$ an admissible $\Cl_{0,4}$-module. Then 
$
J_j^2:=J^2_{z_j}=\Id_V$, $j=1,2,3,4$.
The operator $P=J_1J_2J_3J_4$ is isometric involution and $T=J_1$ is an anti-isometry such that $PT=-TP$. Applying Lemma~\ref{lemma:04} we find $w\in V$ such that 
$Pw=w$ and $\langle w,w\rangle_{V}=1$. 
Then we get relations
\begin{align}\label{eq:Cl04_2}
&J_1w= J_2J_3J_4w,\quad J_2w=-J_1J_3J_4w,\quad J_3w=J_1J_2J_4w,\quad J_4w=- J_1J_2J_3w,\nonumber
\\
&J_1J_2w=-J_3J_4w, \quad J_1J_3w=J_2J_4w, \quad J_1J_4w=-J_2J_3w,
\end{align}
that allows us to choose 8 orthogonal vectors
\begin{align}\label{basis 04}
\begin{array}{lllllll}
& v_1=w,\quad & v_2=J_1J_2w,\quad  & v_3=J_1J_3w,\quad & v_4=J_1J_4w,
\\
&v_5=J_1w, & v_6=J_2w, & v_7=J_3w, & v_8=J_4w,
\end{array}
\end{align}
satisfying $
\langle v_\alpha,v_\alpha\rangle_{V}=1$, $\alpha=1,2,3,4$ and $\langle v_\alpha,v_\alpha\rangle_{V}=-1$, for $\alpha=5,6,7,8$.

The relations~\eqref{eq:Cl04_2} shows that the action 
of $J_j$, $j=1,2,3,4$, on the basis $\{v_{\alpha}\}_{\alpha=1}^8$ 
permutes elements of the basis up to sign. 
We conclude $\langle J_jv_\alpha,v_\beta\rangle_{V}=\pm 1$ 
or $0$ for
$\alpha,\beta=1,\ldots,8$. 

The subspace spanned by vectors~\eqref{basis 04} is invariant
under the action of the Clifford algebra $\Cl_{0,4}$.
Using Proposition~\ref{prop:1}, (2) we conclude that 
constructed an admissible sub-module $W=\spn\{v_1,\ldots,v_8\}$, is of minimal dimension admissible integral
module. 
This module is irreducible.


\subsection{Integral structure on admissible $\Cl_{0,5}$-module}
\label{1module05}


The Clifford algebra $\Cl_{0,5}$ is isomorphic to the direct sum 
$\mathbb H(2)\oplus\mathbb H(2)$ of spaces of $(2\times 2)$-matrices with quaternion entries.
Let $z_j$, $j=1,\ldots,5$, be generators of $\Cl_{0,5}$ with $\langle
z_j,z_j\rangle_{\mathbb R^{0,5}}=-1$ 
and $(V,\langle \cdot\,,\cdot\rangle_{V})$ an admissible
$\Cl_{0,5}$-module. Then 
$
J_j^2:=J^2_{z_j}=\Id_V$, $j=1,\ldots,5$.
Consider the isometric involution $P_1=J_1J_2J_3J_4$ and operator $T=J_5$. Then the restriction of $\langle \cdot\,,\cdot\rangle_{V}$ to the eigenspace $E_{+}$ of the involution $P$ is neutral by Lemma~\ref{lem:PT1}. Thus we can find $w\in E_{+}$ such that $\langle w,w\rangle_{V}=1$. 

Recalling the relations~\eqref{eq:Cl04_2} and taking into account the invarianse of $E_{\pm}$ under $J_5$, we present an eigenspace decomposition of the involution $P_1$.
\begin{table}[ht]\label{tab:1}
\begin{center}
\caption{Eigenspace decomposition: $\Cl_{0,5}$ case}
\begin{tabular}{|l|c|c|} \hline
Involutions&\multicolumn{2}{|c|}{Eigenvalues}
\\ 
\hline
$P_1$&\multicolumn{1}{|c|}{$+1$} &\multicolumn{1}{|c|}{$-1$}\\\hline\hline
{Eigenvector}
&$w$,\ $J_1J_2w$,\ $J_1J_3w$,\ $J_1J_4w$ 
&$J_1w$,\ $J_2w$,\ $J_3w$,\ $J_4w$
\\
&$J_5w$,\ $J_1J_2J_5w$,\ $J_1J_3J_5w$,\ $J_1J_4J_5w$
&$J_1J_5w$,\ $J_2J_5w$,\ $J_3J_5w$,\ $J_4J_5w$
\\
\hline
\end{tabular}
\end{center}
\end{table}

It allows to choose the linear independent elements
\begin{equation} \label{eq:basis05}
\begin{array}{lllllllllll}
& v_1=w,\quad & v_2=J_1J_2w,\quad & v_3=J_1J_3w,\quad  & v_4=J_1J_4w,
\\ 
& v_5=J_1J_5,& v_6=J_2J_5w, & v_7=J_3J_5w, & v_8=J_4J_5w,
\\
&v_9=J_1w,\quad &v_{10}=J_2w,\quad &v_{11}=J_3w,\quad &v_{12}=J_4w,
\\ 
&v_{13}=J_5w,\ &v_{14}=J_1J_2J_5w,& v_{15}=J_1J_3J_5w,& v_{16}=J_1J_4J_5w.
\end{array}
\end{equation}
satisfying $\langle v_\alpha,v_\alpha\rangle_{V}=1$, $\alpha=1,\ldots, 8$ and $\langle v_\alpha,v_\alpha\rangle_{V}=-1$, $\alpha=9,\ldots,16$.
Unfortunately not all the vectors in~\eqref{eq:basis05} are orthogonal for an arbitrary choice of $w\in V$. The possible non-vanishing relations are
$$
\begin{array}{llllllllllllllllll}
& \langle v_1,v_{14}\rangle_{V} & = \langle v_4,v_{15}\rangle_{V} & =\langle v_5,v_{10}\rangle_{V} & =\langle v_8,v_{11}\rangle_{V}=a,
\\
& \langle v_2,v_{13}\rangle_{V} & = \langle v_3,v_{16}\rangle_{V} & =\langle v_6,v_{9}\rangle_{V} & =\langle v_7,v_{12}\rangle_{V}=-a,
\\
& \langle v_1,v_{15}\rangle_{V} & = \langle v_2,v_{16}\rangle_{V} & =\langle v_5,v_{11}\rangle_{V} & =\langle v_6,v_{12}\rangle_{V}=b,
\\
& \langle v_3,v_{13}\rangle_{V} & = \langle v_4,v_{14}\rangle_{V} & =\langle v_7,v_{9}\rangle_{V} & =\langle v_8,v_{10}\rangle_{V}=-b,
\\
& \langle v_1,v_{16}\rangle_{V} & = \langle v_3,v_{14}\rangle_{V} & =\langle v_5,v_{12}\rangle_{V} & =\langle v_7,v_{10}\rangle_{V}=c,
\\
& \langle v_2,v_{15}\rangle_{V} & = \langle v_4,v_{13}\rangle_{V} & =\langle v_6,v_{11}\rangle_{V} & =\langle v_8,v_{9}\rangle_{V}=-c.
\end{array}
$$
If we denote $\Omega_1=J_1J_2J_5$, $\Omega_2=J_1J_3J_5$, and $\Omega_3=J_1J_4J_5$, then we apply Lemma~\ref{orthogonal} and Corollary~\ref{cor:orthogonal} and find $\widetilde w$ making all the vectors in~\eqref{eq:basis05} orthogonal.

Moreover the relations~\eqref{eq:Cl04_2} and Table~1 shows that the action of $J_j$, $j=1,\ldots, 5$ permutes the basis up to sign. The constructed admissible integral sub-module 
$(W,\langle \cdot\,,\cdot\rangle_{W})$, $W=\spn\{v_1,\ldots,v_{16}\}$, 
is of minimal dimension, 
but is not irreducible, since the irreducible module of the Clifford
algebra $\Cl_{0,5}\cong\mathbb H(2)\oplus\mathbb H(2)$ is of dimension $8$. 


\subsection{Integral structure on admissible $\Cl_{0,6}$-module}\label{06}


The Clifford algebra $\Cl_{0,6}$ is isomorphic to the space $\mathbb H(4)$.
Let $z_j$, $j=1,\ldots,6$ be orthogonal generators of $\Cl_{0,6}$ with 
$\langle z_j,z_j\rangle_{\mathbb R^{0,6}}=-1$ and $(V,\langle\cdot\,
,\cdot\rangle_{V})$ 
an admissible $\Cl_{0,6}$-module. Then 
$
J_j^2:=J^2_{z_j}=\Id_V$, $j=1,\ldots,6$. 
Consider two isometric and mutually commuting involutions $P_1=J_1J_2J_3J_4$ and $P_2=J_1J_2J_5J_6$. 
In order to construct a collection of 
complementary operators we present the 
table of commuting relations with generators $J_j$, $j=1,\ldots,6$.

\smallskip

\centerline {Commutation relations: $\Cl_{0,6}$ case}
\begin{equation}\label{eq:Cl06Gene}
\begin{array}{|l|c|c|c|c|c|c|c|}\hline
\text{Involution}\backslash \text{Generator}&J_{1}&J_{2}&J_{3}&J_{4}&J_{5}&J_{6},\\\hline
P_1=J_{1}J_{2}J_{3}J_{4}&a&a&a&a&c&c\\\hline
P_2=J_{1}J_{2}J_{5}J_{6}&a&a&c&c&a&a\\\hline
\end{array}
\end{equation}
Here "a" denotes that the corresponding operators anti-commute and "c"
means that they are commuting. 
Then we give the table of complementary operators, where we use 
the sign $(+\to +)$ to emphasise that the operator is an isometry 
and $(+\to -)$ to point out the anti-isometric operator.

\begin{center}
{Complementary operator}: $\Cl_{0,6}$ case
\end{center}
\begin{equation*}\label{eq:Cl06comp}
\begin{array}{|l|c|c|c|c|}\hline
\text{Involution}\backslash \text{Comp. op.}
&J_{1}(+\to -)&J_{5}(+\to -)&J_2J_3J_5(+\to -)\\\hline
P_1=J_{1}J_{2}J_{3}J_{4}&a&c&c\\\hline
P_2=J_{1}J_{2}J_{5}J_{6}&&a&c\\\hline
\end{array}
\end{equation*}

From these two tables we deduce, that since $P_1J_1=-J_1P_1$ and
$P_1J_5=J_5P_1$ the eigenspaces $E_{1\pm}$ of the operator $P_1$ are
neutral spaces by Lemma~\ref{lem:PT2}. 
Then we apply  Lemma~\ref{lem:PT2} to the neutral space $E_{1+}$, the
operator $P_2$ and anti-isometries $T_1=J_5$, $T_2=J_2J_3J_5$ and
conclude that $E_{1+}$ is decomposed into 
two eigenspaces $E_{2\pm}$ of the operator $P_2$ 
which both are neutral. The same arguments 
are applied to the neutral space $E_{1-}$ and operators $P_2$ and $T_1=J_5$, $T_2=J_2J_3J_5$.
Thus we can find $w\in E_{2+}\cap E_{1+}\subset V$ such that 
$$
P_1w=J_1J_2J_3J_4w=w,\quad P_2w=J_1J_2J_5J_6w=w,\quad\text{and}\quad\langle w,w\rangle_V=1.
$$

Then we fix linear dependent vectors
\begin{equation}\label{eq:Cl06_2}
\begin{array}{llllllllllllllll}
&J_1w = J_2J_3J_4w = J_2J_5J_6w,\qquad &J_1J_2w = -J_3J_4w = -J_5J_6w, 
\\
&J_2w = -J_1J_3J_4w  = -J_1J_5J_6w,\qquad &J_1J_3w = J_2J_4w,
\\ 
& J_3w = J_1J_2J_4w  = -J_4J_5J_6w,
\qquad &J_1J_4w = -J_2J_3w,
\\ 
&J_4w = -J_1J_2J_3w  = J_3J_5J_6w,
\qquad &J_1J_5w = J_2J_6w, 
\\
&J_5w = J_1J_2J_6w = -J_3J_4J_6w,\qquad &J_1J_6w = -J_2J_5w,
\\
& J_6w = -J_1J_2J_5w = J_3J_4J_5w,
\qquad &J_3J_5w = -J_4J_6w,
\\ 
&&J_3J_6w = J_4J_5w.
\\
 &J_1J_3J_5w = -J_1J_4J_6w = J_2J_3J_6w =J_2J_4J_5w,
 \\ 
 &J_1J_3J_6w = J_1J_4J_5w = -J_2J_3J_5w  =J_2J_4J_6w.
 \end{array}
\end{equation}
We present the simultaneous eigenspace decomposition 
by the involutions $P_1$ and $P_2$ taking into account~\eqref{eq:Cl06Gene} and~\eqref{eq:Cl06_2}:

{\small
\begin{table}[ht]
\begin{center}
\caption{Eigenspace decomposition: $\Cl_{0,6}$ case}
\begin{tabular}{|l|c|c|c|c|} \hline
Involutions&\multicolumn{4}{|c|}{Eigenvalues}\\ \hline
$P_1=J_{1}J_{2}J_{3}J_{4}$&\multicolumn{2}{|c|}{$+1$} &\multicolumn{2}{|c|}{$-1$}\\\hline
$P_2=J_{1}J_{2}J_{5}J_{6}$&\multicolumn{1}{|c|}{$+1$} &\multicolumn{1}{|c|}{$-1$}
&\multicolumn{1}{|c|}{$+1$} &\multicolumn{1}{|c|}{$-1$}\\\hline\hline
{Eigenvector}
&$w,\ J_1J_2w$&$J_1J_3w,\ J_1J_4w$&$J_1J_5w,\ J_1J_6w$& $J_3J_5w,\ J_3J_6w$
\\    
&$J_1J_3J_5w,\ J_1J_3J_6w$&$J_5w,\ J_6w$&$J_3w,\ J_4w$&$J_1w,\ J_2w$
\\\hline
\end{tabular}
\end{center}
\end{table}
}

We choose the vectors 
\begin{equation}\label{eq:basis06}
\begin{array}{lllllllll}
&v_1=w,\quad &v_2=J_1J_2w,\quad & v_3=J_1J_3w,\quad & v_4=J_1J_4w,
\\ 
&v_5=J_1J_5w,& v_6=J_1J_6w, & v_7=J_3J_5w,& v_8=J_3J_6w
\\
&v_9=J_1w,\quad &v_{10}=J_2w,\quad &v_{11}=J_3w,\quad &v_{12}=J_4w,
\\ 
&v_{13}=J_5w, &v_{14}=J_6w, & v_{15}=J_1J_3J_5w, & v_{16}=J_1J_3J_6w,
\end{array}
\end{equation}
satisfying $\langle v_{\alpha},v_{\alpha}\rangle_{V}=1$, $\alpha=1,\ldots,8$ and $\langle v_{\alpha},v_{\alpha}\rangle_{V}=-1$, $\alpha=9,\ldots,16$. Since not all of them can be orthogonal, we apply Lemma~\ref{orthogonal} to operators $\Omega_1=J_1J_3J_5$, and $\Omega_2=J_1J_3J_6$ in order to choose correct vector $w\in E_{1+}\cap E_{2+}\subset V$. We need to use the orthogonalisation only for these operators, since other triplets are linear dependent with $\Omega_1$ and $\Omega_2$. Thus the basis~\eqref{eq:basis06} is orthonormal and 
the action of any $J_j$, $j=1,\ldots, 6$ permutes it 
up to sign, by~\eqref{eq:Cl06_2}. We conclude that the constructed 16 dimensional 
sub-module $(W,\langle \cdot\,,\cdot\rangle_{W})$ is of minimal
dimension. 
Moreover it is irreducible. 

\subsection{Integral structure on admissible $\Cl_{0,7}$-module}
\label{07}

The Clifford algebra $\Cl_{0,7}$ is isomorphic to the space $\mathbb C(8)$.
Let $z_j$, $j=1,\ldots,7$, be generators of $\Cl_{0,7}$ with $\langle
z_j,z_j\rangle_{\mathbb R^{0,7}}=-1$ 
and $(V,\langle\cdot\, ,\cdot\rangle_{V})$ an admissible $\Cl_{0,7}$-module. Then 
$
J_j^2:=J^2_{z_j}=\Id_V$, $j=1,\ldots,7$.
In this case we fix three mutually commuting symmetric isometric involutions
$P_1=J_1J_2J_3J_4$, $P_2=J_1J_2J_5J_6$ and $P_3=J_2J_3J_6J_7$. 
This gives the table of commuting relations with generators and the collection of 
complementary operators.

\begin{center}
{Commutation relations: $\Cl_{0,7}$ case}
\end{center}
\begin{equation}\label{eq:Cl07Gene}
\begin{array}{|l|c|c|c|c|c|c|c|}\hline
\text{Involution}\backslash \text{Generator}&J_{1}&J_{2}&J_{3}&J_{4}&J_{5}&J_{6}&J_{7},\\\hline
P_1=J_{1}J_{2}J_{3}J_{4}&a&a&a&a&c&c&c\\\hline
P_2=J_{1}J_{2}J_{5}J_{6}&a&a&c&c&a&a&c\\\hline
P_3=J_{2}J_{3}J_{6}J_{7}&c&a&a&c&c&a&a\\\hline
\end{array}
\end{equation}

\begin{center}
{Complementary operator}: $\Cl_{0,7}$ case
\end{center}
\begin{equation*}\label{eq:Cl07comp}
\begin{array}{|l|c|c|c|c|}\hline
\text{Involution}\backslash \text{Comp. op.}&J_{1}(+\to-)&J_{5}(+\to-)&J_{7}(+\to-)&J_5J_6J_7(+\to-)\\\hline
P_1=J_{1}J_{2}J_{3}J_{4}&a&c&c&c\\\hline
P_2=J_{1}J_{2}J_{5}J_{6}&&a&c&c\\\hline
P_3=J_{2}J_{3}J_{6}J_{7}&&&a&c\\\hline
\end{array}
\end{equation*}
From these relations, applying Lemma~\ref{lem:PT2}, we can choose a vector $w\in V$ such that
\[
P_1w=w,\quad P_2w=w,\quad P_3w=w,\quad\text{and}\quad\langle w ,w\rangle_{V}=1, 
\]
since the common eigenspaces
of all three involutions are all neutral spaces. 
Then we have a simultaneous eigenspace decomposition of a subspace  $W\subset V$
spanned by the $16$ eigenvectors 
$$
\begin{array}{lllllll}
& v_1=w, \quad & v_2=J_1J_2w,\quad & v_3=J_1J_3w,\quad &v_4=J_1J_4w,
\\
& v_5=J_1J_5w, & v_6=J_1J_6w, & v_7=J_1J_7w, & v_8=J_3J_6w
\\
& v_9=J_1w,\quad & v_{10}=J_2w,\quad &v_{11}=J_3w,\quad &v_{12}=J_4w,
\\ 
& v_{13}=J_5w,\ & v_{14}=J_6w, & v_{15}=J_7w, &v_{16}=J_1J_3J_6w,
\end{array}
$$
satisfying  $\langle v_{\alpha},v_{\alpha}\rangle_{V}=1$, $\alpha=1,\ldots,8$ and $\langle v_{\alpha},v_{\alpha}\rangle_{V}=-1$, $\alpha=9,\ldots,16$.
{\small
\begin{table}[ht]
\begin{center}
\caption{Eigenspace decomposition: $\Cl_{0,7}$ case}
\begin{tabular}{|l|c|c|c|c|c|c|c|c|} \hline
Involutions&\multicolumn{8}{|c|}{Eigenvalues}\\ \hline
$P_1$&\multicolumn{4}{|c|}{$+1$} &\multicolumn{4}{|c|}{$-1$}\\\hline
$P_2$&\multicolumn{2}{|c|}{$+1$}
&\multicolumn{2}{|c|}{$-1$}&\multicolumn{2}{|c|}{$+1$} 
&\multicolumn{2}{|c|}{$-1$}\\\hline
$P_3$&\multicolumn{1}{|c|}{$+1$}
&\multicolumn{1}{|c|}{$-1$}&\multicolumn{1}{|c|}{$+1$} 
&\multicolumn{1}{|c|}{$-1$}
&\multicolumn{1}{|c|}{$+1$} &\multicolumn{1}{|c|}{$-1$}
&\multicolumn{1}{|c|}{$+1$} &\multicolumn{1}{|c|}{$-1$}\\
\hline\hline
{Eigenvectors}
&$w$&$J_1J_2w$&$J_1J_4w$& $J_1J_3w$&$J_1J_5w$&$J_1J_6w$&$J_1J_3J_6w$&$J_1J_7w$
\\
&$J_3J_6w$&$J_7w$&$J_5w$& $J_6w$&$J_4w$&$J_3w$&$J_1w$&$J_2w$
\\\hline
\end{tabular}
\end{center}
\end{table}
}
Where we used~\eqref{eq:Cl07Gene} and the following linear dependence relations.
\begin{align}\label{eq:Cl07_1}
&\begin{array}{llllllllllll}
& J_1w = J_2J_3J_4w = J_2J_5J_6w = J_3J_5J_7w = -J_4J_6J_7w,
\\
&J_2w =  -J_1J_3J_4w = -J_1J_5J_6w = J_3J_6J_7w = J_4J_5J_7w,
\\
& J_3w = J_1J_2J_4w = -J_1J_5J_7w = -J_2J_6J_7w = -J_4J_5J_6w,
\\
& J_4w = -J_1J_2J_3w = J_1J_6J_7w = -J_2J_5J_7w =J_3J_5J_6w,
\\ 
&J_5w = J_1J_2J_6w = J_1J_3J_7w = J_2J_4J_7w = -J_3J_4J_6w,
\\
& J_6w = -J_1J_2J_5w = -J_1J_4J_7w = J_2J_3J_7w = J_3J_4J_5w,
\\
&J_7w = J_1J_4J_6w = -J_1J_3J_5w = -J_2J_3J_6w = -J_2J_4J_5w,
\end{array}
\\
&\begin{array}{llll}
& J_1J_2w = -J_3J_4w = -J_5J_6w,     &J_1J_6w = -J_2J_5w = -J_4J_7w,
\\
& J_1J_3w = J_2J_4w = -J_5J_7w,       &J_1J_7w = -J_3J_5w = J_4J_6w,
\\
& J_1J_4w = -J_2J_3w = J_6J_7w,     & J_3J_6w = -J_2J_7w = J_4J_5w,
\\
& J_1J_5w = J_2J_6w = J_3J_7w, 
\end{array}\nonumber
\end{align}
$$
J_1J_3J_6w=-J_1J_2J_7w=J_1J_4J_5w=-J_2J_3J_5w=J_2J_4J_6w=J_3J_4J_7w=J_5J_6J_7w.
$$

At the final step, we apply Lemma~\ref{orthogonalization} for
the operator $\Omega=J_1J_3J_6w$ to make the basis orthogonal. The relations~\eqref{eq:Cl07_1} shows that the action of $J_i$, $i=1,\ldots,7$ permutes the orthonormal basis $\{v_{\alpha}\}_{\alpha=1}^{16}$ up to sign. We conclude that the constructed 16 dimensional sub-module $(W,\langle \cdot\,,\cdot\rangle_{W})$ is of minimal dimension. Moreover it is irreducible.

\subsection{Integral structure on admissible $\Cl_{0,8}$-module}
\label{08}


The Clifford algebra $\Cl_{0,8}$ is isomorphic to the space $\mathbb R(16)$.
Let $z_j$, $j=1,\ldots,8$, be orthonormal generators of $\Cl_{0,8}$
with 
$\langle z_j,z_j\rangle_{\mathbb R^{0,8}}=-1$ 
and $(V,\langle\cdot,\,\cdot\rangle_{V})$ an admissible $\Cl_{0,8}$-module. Then 
$
J_j^2:=J^2_{z_j}=\Id_V$, $j=1,\ldots,8$.
We fix four isometric mutually commuting involutions
$P_i$:
$$
P_1=J_1J_2J_3J_4,\quad
P_2=J_1J_2J_5J_6,\quad P_3=J_2J_3J_5J_7,\quad\text{and}\quad P_4=J_1J_2J_7J_8.
$$
In this case to find 5 anti-isometric complementary operators is impossible and we chose the different strategy. The tables of commutation relations of the
involutions with the generators and the family of complementary operators are the following:

\begin{center}
{Commutation relations: $\Cl_{0,8}$ case}
\end{center}
\begin{equation*}\label{eq:Cl08Gene}
\begin{array}{|l|c|c|c|c|c|c|c|c|}\hline
\text{Involution}\backslash \text{Generator}&J_{1}&J_{2}&J_{3}&J_{4}&J_{5}&J_{6}&J_{7}&J_{8}
\\\hline
P_1=J_{1}J_{2}J_{3}J_{4}&a&a&a&a&c&c&c&c\\\hline
P_2=J_{1}J_{2}J_{5}J_{6}&a&a&c&c&a&a&c&c\\\hline
P_3=J_2J_3J_5J_7         &c&a&a&c&a&c&a&c\\\hline
P_4=J_1J_2J_7J_8         &a&a&c&c&c&c&a&a\\\hline
\end{array}
\end{equation*}

\begin{center}
{Complementary operator}: $\Cl_{0,8}$ case
\end{center}
{\small
\begin{equation*}\label{eq:Cl08comp}
\begin{array}{|l|c|c|c|c|}\hline
\text{Involution}\backslash \text{Comp. op.}&J_{1}J_5(+\to+)&J_1J_3(+\to+)&J_1J_2(+\to+)&J_{8}(+\to-)
\\\hline
P_1=J_{1}J_{2}J_{3}J_{4}&a&c&c&c\\\hline
P_2=J_{1}J_{2}J_{5}J_{6}& &a&c&c\\\hline
P_3=J_2J_3J_5J_7         & & &a&c\\\hline
P_4=J_1J_2J_7J_8         & & & &a\\\hline
\end{array}
\end{equation*}
}

We apply Lemma~\ref{lem:PT1}, item 1) three times to pairs $(P_1, T_1=J_{1}J_5)$,  $(P_2, T_2=J_{1}J_3)$, and $(P_3, T_3=J_{1}J_2)$ and conclude that the common 1-eigenspace of $P_1,P_2,P_3$ is neutral. We decompose it into two orthogonal eigenspaces $E_{4\pm}$ of $P_4$. If the space $E_{4+}$ is negative definite, we use Lemma~\ref{lemma:04} and modify the operator $P$. From these relations we can choose a vector $w\in V$ such that
$$
P_1w= P_2w= P_3w=P_4w=w\quad\text{and}\quad \langle w ,w\rangle_{V}=1.
$$
We act successively by $J_j$ on $w$ and get a simultaneous eigenspace decomposition spanned by 16 orthogonal vectors 
$$
\begin{array}{lllllll}
& v_1=w, \quad & v_2=J_1J_2w,\quad & v_3=J_1J_3w,\quad &v_4=J_1J_4w,
\\
& v_5=J_1J_5, & v_6=J_1J_6, & v_7=J_1J_7, & v_8=J_1J_8w
\\
& v_9=J_1w,\quad & v_{10}=J_2w,\quad &v_{11}=J_3w,\quad &v_{12}=J_4w,
\\ 
& v_{13}=J_5w,\ & v_{14}=J_6w, & v_{15}=J_7w, &v_{16}=J_8w,
\end{array}
$$
with $\langle v_{\alpha},v_{\alpha}\rangle_{V}=1$, $\alpha=1,\ldots,8$ and $\langle v_{\alpha},v_{\alpha}\rangle_{V}=-1$, $\alpha=9,\ldots,16$. 

{\small
\begin{table}[ht]
\begin{center}
\caption{Eigenspace decomposition: $\Cl_{0,8}$ case}
\begin{tabular}{|l|c|c|c|c|c|c|c|c|} \hline
\text{Involution}&\multicolumn{8}{|c|}{\text{Eigenvalue}}\\\hline
$P_1$&\multicolumn{8}{|c|}{$+1$}\\\hline
$P_2$&\multicolumn{4}{|c|}{$+1$}&\multicolumn{4}{|c|}{$-1$}\\\hline
$P_3$&\multicolumn{2}{|c|}{$+1$} &\multicolumn{2}{|c|}{$-1$}
&\multicolumn{2}{|c|}{$+1$} &\multicolumn{2}{|c|}{$-1$}\\\hline
$P_4$&\multicolumn{1}{|c|}{$+1$} &\multicolumn{1}{|c|}{$-1$}
&\multicolumn{1}{|c|}{$+1$} &\multicolumn{1}{|c|}{$-1$}
&\multicolumn{1}{|c|}{$+1$} &\multicolumn{1}{|c|}{$-1$}
&\multicolumn{1}{|c|}{$+1$} &\multicolumn{1}{|c|}{$-1$}\\\hline\hline
{Eigenvectors}&$w$&$J_8w$&$J_1J_2w$&
$J_7w$&$J_6w$&$J_1J_4w$&$J_5w$&$J_1J_3w$
\\
\hline
\end{tabular}
\end{center}
\end{table}
}
{\small
\begin{table}[h]
\begin{center}
\begin{tabular}{|l|c|c|c|c|c|c|c|c|} \hline
\text{Involution}&\multicolumn{8}{|c|}{\text{Eigenvalue}}\\\hline
$P_1$&\multicolumn{8}{|c|}{$-1$}\\\hline
$P_2$&\multicolumn{4}{|c|}{$+1$}&\multicolumn{4}{|c|}{$-1$}\\\hline
$P_3$&\multicolumn{2}{|c|}{$+1$} &\multicolumn{2}{|c|}{$-1$}
&\multicolumn{2}{|c|}{$+1$} &\multicolumn{2}{|c|}{$-1$}\\\hline
$P_4$&\multicolumn{1}{|c|}{$+1$} &\multicolumn{1}{|c|}{$-1$}
&\multicolumn{1}{|c|}{$+1$} &\multicolumn{1}{|c|}{$-1$}
&\multicolumn{1}{|c|}{$+1$} &\multicolumn{1}{|c|}{$-1$}
&\multicolumn{1}{|c|}{$+1$} &\multicolumn{1}{|c|}{$-1$}\\\hline\hline
{Eigenvectors}
&$J_4w$&$J_1J_6w$&$J_3w$& $J_1J_5w$&$J_1J_8w$&$J_1w$&$J_1J_7w$&$J_2w$\\\hline
\hline
\end{tabular}
\end{center}
\end{table}
}
The presence of new involution $P_4$ eliminate additional linear dependent vectors and as in the previous cases it can be shown that generators acts by permutation on the basis. Therefore, we constructed an integral structure
of a minimal admissible and irreducible $\Cl_{0,8}$-module. For the completeness we present all linear relations up to sign.
\begin{equation}\label{eq:Cl08_all}
\begin{array}{lll}
& J_1J_2=\pm J_3J_4=\pm J_5J_6=\pm J_7J_8,
\\
& J_1J_3=\pm J_2J_4=\pm J_5J_8=\pm J_6J_7,
\\
& J_1J_4=\pm J_2J_3=\pm J_5J_7=\pm J_6J_8,
\\
& J_1J_5=\pm J_2J_6=\pm J_3J_8=\pm J_4J_7,
\\
& J_1J_6=\pm J_2J_5=\pm J_3J_7=\pm J_4J_8,
\\
& J_1J_7=\pm J_2J_8=\pm J_3J_6=\pm J_4J_5,
\\
& J_1J_8=\pm J_2J_7=\pm J_3J_5=\pm J_4J_6,
\end{array}
\end{equation}
$$
\begin{array}{lll}
& J_1=\pm J_2J_3J_4=\pm J_2J_5J_6=\pm J_2J_7J_8=\pm J_3J_5J_8=\pm J_3J_6J_7=\pm J_4J_5J_7=\pm J_4J_6J_8,
\\
& J_2=\pm J_1J_3J_4=\pm J_1J_5J_6=\pm J_1J_7J_8=\pm J_3J_5J_7=\pm J_3J_6J_8=\pm J_4J_5J_8=\pm J_4J_6J_7,
\\
& J_3=\pm J_1J_2J_4=\pm J_1J_5J_8=\pm J_1J_6J_8=\pm J_2J_5J_7=\pm J_2J_6J_8=\pm J_4J_5J_6=\pm J_4J_7J_8,
\\
& J_4=\pm J_1J_2J_3=\pm J_1J_5J_7=\pm J_1J_6J_8=\pm J_2J_5J_8=\pm J_2J_6J_7=\pm J_3J_5J_6=\pm J_3J_7J_8,
\\
& J_5=\pm J_1J_2J_6=\pm J_1J_3J_8=\pm J_1J_4J_7=\pm J_2J_3J_7=\pm J_2J_4J_8=\pm J_3J_4J_6=\pm J_6J_7J_8,
\\
& J_6=\pm J_1J_2J_5=\pm J_1J_3J_7=\pm J_1J_4J_8=\pm J_2J_3J_8=\pm J_2J_4J_7=\pm J_3J_4J_5=\pm J_5J_7J_8,
\\
& J_7=\pm J_1J_2J_8=\pm J_1J_3J_6=\pm J_1J_4J_5=\pm J_2J_3J_5=\pm J_2J_4J_6=\pm J_3J_4J_8=\pm J_5J_6J_8,
\\
& J_8=\pm J_1J_2J_7=\pm J_1J_3J_5=\pm J_1J_4J_6=\pm J_2J_3J_6=\pm J_2J_4J_5=\pm J_3J_4J_7=\pm J_5J_6J_7.
\end{array}
$$

\section{Integral structure on admissible $\Cl_{r,1}$-modules}
\label{sec:r0 and r1}


In this section we show the existence of the integral structure for $\Cl_{r,1}$-modules based on the existence of those for $\Cl_{r,0}$-modules. 
In the next section we will deal with the classical cases $\Cl_{r,0}$ directly.

Let $(V,\langle\cdot\,,\cdot\rangle_V)$ be an admissible
$\Cl_{r,1}$-module. Denote by $J_i=J_{z_i}$, $i=1,\ldots, r+1$ 
the representations of orthonormal generators of the algebra $\Cl_{r,1}$ such that
$$
J_i^2=J_{z_i}^2=-\Id_V,\quad i=1,\ldots,r,\qquad J_{r+1}^2=J_{z_{r+1}}^2=\Id_V.
$$ 
Let $U$ be a subspace of $V$ invariant under the action of
$J_i=J_{z_i}$, $i=1,\ldots, r$, 
and let $U=\spn\{u_1,\ldots,u_l\}$,  be a basis such that 
\begin{itemize}
\item{$\{u_{\alpha}\}_{\alpha=1}^{l}$ is an orthonormal basis of $U$;}
\item{maps $J_i=J_{z_i}$, $i=1,\ldots, r$, permute the basis $\{u_{\alpha}\}_{\alpha=1}^{l}$ up to sign;}
\item{$\langle J_iu_{\alpha},u_{\beta}\rangle_V=\pm 1$ or $0$.}
\end{itemize}
Denote $\tilde u_{\alpha}=J_{r+1}u_{\alpha}$, $\alpha=1,\ldots, l$, $\widetilde U=\spn\{\tilde u_1,\ldots,\tilde u_l\}$, and set the space $$W=\spn\{u_1,\ldots,u_l,\tilde u_1,\ldots,\tilde u_l\}.$$
Then
\begin{theorem}\label{prop:r0tor1}
In the notations above if the decomposition $W=U\oplus\widetilde U$ is orthogonal with respect to $\langle\cdot\,,\cdot\rangle_V$, then $(W,\langle\cdot\,,\cdot\rangle_W)$, where $\langle\cdot\,,\cdot\rangle_W$ is the restriction of $\langle\cdot\,,\cdot\rangle_V$ onto $W$ is an admissible integral $\Cl_{r,1}$-module. If $U$ has minimal dimension, then $W$ is an admissible integral $\Cl_{r,1}$-module of minimal dimension.
\end{theorem}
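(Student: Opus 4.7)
The plan is to verify that the basis $\{u_1,\ldots,u_l,\tilde u_1,\ldots,\tilde u_l\}$ is orthonormal in $(W,\langle\cdot\,,\cdot\rangle_W)$, that every generator $J_1,\ldots,J_{r+1}$ permutes it up to sign, and then to invoke Proposition~\ref{prop:1} and Corollary~\ref{cor:2-1} for admissibility and integrality; minimality will be handled by a separate dimension count.

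First I would check that $W$ is $\Cl_{r,1}$-invariant. For $i\leq r$, $J_iu_\alpha\in U$ by the hypothesis on $U$, while
\begin{equation*}
J_i\tilde u_\alpha=J_iJ_{r+1}u_\alpha=-J_{r+1}J_iu_\alpha\in\widetilde U,
\end{equation*}
thanks to the Clifford anti-commutation $J_iJ_{r+1}+J_{r+1}J_i=0$. For $i=r+1$, the identity $J_{r+1}^2=\Id_V$ exchanges $U$ and $\widetilde U$. Next, the admissibility (skew symmetry) of $J_{r+1}$ together with $J_{r+1}^2=\Id_V$ gives
\begin{equation*}
\langle\tilde u_\alpha,\tilde u_\beta\rangle_V=\langle J_{r+1}u_\alpha,J_{r+1}u_\beta\rangle_V=-\langle u_\alpha,J_{r+1}^2u_\beta\rangle_V=-\delta_{\alpha\beta}.
\end{equation*}
Combined with the standing hypothesis $U\perp\widetilde U$ and orthonormality of $\{u_\alpha\}$, this exhibits $\{u_\alpha,\tilde u_\alpha\}$ as an orthonormal basis of $W$ with the $\tilde u_\alpha$ timelike, so $\langle\cdot\,,\cdot\rangle_W$ is non-degenerate and Proposition~\ref{prop:1}(2) identifies $W$ as an admissible sub-module.

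For the permutation property, the hypothesis on $U$ yields $J_iu_\alpha=\pm u_\beta$ for $i\leq r$, whence $J_i\tilde u_\alpha=-J_{r+1}(J_iu_\alpha)=\mp\tilde u_\beta$, while for $i=r+1$ the swap $u_\alpha\leftrightarrow\tilde u_\alpha$ was already recorded in the invariance step. Thus every generator permutes the basis of $W$ up to sign, and Corollary~\ref{cor:2-1} immediately gives $\langle J_iv_\alpha,v_\beta\rangle_W\in\{0,\pm 1\}$ for all $i=1,\ldots,r+1$ and all basis indices $\alpha,\beta$. Hence $W$ is an admissible integral $\Cl_{r,1}$-module.

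The step I expect to be the main obstacle is the minimality assertion. The construction yields $\dim W=2\dim U$, so minimality of $W$ reduces to showing that the minimal admissible dimension of a $\Cl_{r,1}$-module is exactly twice the minimal admissible dimension of a $\Cl_{r,0}$-module. I would verify this by inspection of the table of Clifford algebras in the Appendix, combined with the neutrality requirement of Proposition~\ref{prop:1}(1) (which is what can force a doubling of the irreducible dimension for $s\geq 1$). A more conceptual sketch: given an admissible $\Cl_{r,1}$-module $V'$, restriction gives an admissible $\Cl_{r,0}$-module, so $V'$ contains a minimal admissible $\Cl_{r,0}$-submodule $U'$ with $\dim U'=\dim U$; the subspace $J_{r+1}U'$ is again $\Cl_{r,0}$-invariant, and the computation in the second paragraph shows it carries the opposite signature on its image of the orthonormal basis of $U'$, so either it coincides with $U'$ (which is incompatible with $U'$ being admissible and $(V',\langle\cdot\,,\cdot\rangle_{V'})$ being neutral of the expected dimension) or $U'\cap J_{r+1}U'=\{0\}$, giving $\dim V'\geq 2\dim U$. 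In either presentation the verification is finite and routine once the $\Cl_{r,0}$ table is in hand.
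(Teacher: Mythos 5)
Your proof follows essentially the same route as the paper: check $\Cl_{r,1}$-invariance of $W$, compute the scalar products on the combined basis $\{u_\alpha,\tilde u_\alpha\}$ to see that $W$ is non-degenerate and neutral, observe that every $J_i$ permutes this basis up to sign, and conclude admissibility and integrality. Where the paper computes the signature of $\widetilde U$ via the orthogonality identity $\langle J_{r+1}u,J_{r+1}u\rangle_V=\langle z_{r+1},z_{r+1}\rangle\langle u,u\rangle_V$, you instead use skew-symmetry combined with $J_{r+1}^2=\Id_V$; these are equivalent, and your version hands you mutual orthogonality of the $\tilde u_\alpha$ in the same stroke.

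Two small flags. First, you cite Proposition~\ref{prop:1}(2) to identify $W$ as an admissible sub-module, but that item says $W^\perp$ is admissible \emph{given} an already-admissible $W$ --- what you actually needed, and have established, is just the definition: $W$ is $J$-invariant, the restricted form is non-degenerate, and skew symmetry is inherited. Second, on minimality the paper simply asserts it and your inspection-of-the-table fallback is adequate, but the ``conceptual'' variant offers a false dichotomy ($J_{r+1}U'=U'$ versus $U'\cap J_{r+1}U'=\{0\}$ is not an either--or for arbitrary subspaces). The signature argument you gesture at actually forces $U'\cap J_{r+1}U'=\{0\}$ directly: any nonzero $u$ in the intersection would satisfy $\langle u,u\rangle_{V'}\geq 0$ (since $U'$ is definite) and $\langle u,u\rangle_{V'}=-\langle u',u'\rangle_{V'}\leq 0$ for $u=J_{r+1}u'$, hence $u=0$ --- but this presumes that a minimal admissible $\Cl_{r,0}$-submodule $U'$ of an arbitrary admissible $\Cl_{r,1}$-module is sign definite, which needs the uniqueness statement of Corollary~\ref{cor:1} rather than being immediate.
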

\begin{proof}
Observe that the restriction of $\langle\cdot\,,\cdot\rangle_V$ to $U$ will be positive definite or negative definite since
$$
\langle J_iu_{\alpha},J_iu_{\alpha}\rangle_V=\langle z_i,z_i\rangle_{\mathbb R^{r,1}}\langle u_{\alpha},u_{\alpha}\rangle_V=\langle u_{\alpha},u_{\alpha}\rangle_V,\quad i=1,\ldots,r. 
$$
Let us assume that it is positive definite. 
Then $\widetilde U=\spn\{\tilde u_{1},\ldots,\tilde u_{l}\}$ is isomorphic to $U$ and the restriction of $\langle\cdot\,,\cdot\rangle_V$ on $\widetilde U$ is negative definite since
$$
\langle \tilde u_{\alpha},\tilde u_{\alpha}\rangle_V=\langle J_{r+1}u_{\alpha},J_{r+1}u_{\alpha}\rangle_V=\langle z_{r+1},z_{r+1}\rangle_{\mathbb R^{r,1}}\langle u_{\alpha},u_{\alpha}\rangle_V=-\langle u_{\alpha},u_{\alpha}\rangle_V.
$$
We conclude that $J_{r+1}\colon U\to\widetilde U$ defines an
anti-isometry with respect to restrictions of the scalar product
$\langle\cdot\,,\cdot\rangle_V$ onto spaces $U$ and $\widetilde U$ and
the space $(W,\langle\cdot\,,\cdot\rangle_W)$, where
$W=U\oplus\widetilde U$ and $\langle\cdot\,,\cdot\rangle_W$ 
is the restriction of $\langle\cdot\,,\cdot\rangle_V$ onto $W$ is neutral.  

The space $W$ is invariant under the action of $J_i$, $i=1,\ldots,r+1$, and all maps $J_i$, $i=1,\ldots,r+1$ permute the basis $\{v_{\alpha},\tilde v_{\alpha}\}_{\alpha=1}^{l}$ up to sign. We conclude that the sub-module $(W,\langle\cdot\,,\cdot\rangle_W)$ is admissible integral and has a minimal possible dimension if the space $U$ has minimal dimension.
\end{proof}
\begin{corollary}\label{cor:2}
The dimension of the minimal admissible module for $\Cl_{r,1}$ is always twice
of the $\Cl_{r,0}$-module.
\end{corollary}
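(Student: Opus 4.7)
The plan is to establish both inequalities. The upper bound follows directly from Theorem~\ref{prop:r0tor1}: starting from any minimal admissible $\Cl_{r,0}$-module $U$, the theorem produces an admissible integral $\Cl_{r,1}$-module of dimension exactly $2\dim U$, so the minimal admissible $\Cl_{r,1}$-module has dimension at most $2\dim U$.

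For the lower bound, I would take an arbitrary admissible $\Cl_{r,1}$-module $V$ and select an irreducible $\Cl_{r,0}$-submodule $W\subset V$ under the action of $J_1,\ldots,J_r$; such a $W$ exists by finite dimensionality, and $\dim W$ equals the dimension of the minimal admissible $\Cl_{r,0}$-module (which coincides with the irreducible one in the classical $H$-type setting, as discussed in Section~\ref{sec:aux}). Because $J_{r+1}$ anticommutes with each $J_i$ for $i\leq r$, the image $J_{r+1}(W)$ is again an irreducible $\Cl_{r,0}$-submodule of the same dimension.

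The argument then splits into two cases. If $W\cap J_{r+1}(W)=\{0\}$, then $\dim V\geq\dim(W+J_{r+1}(W))=2\dim U$. Otherwise, irreducibility forces $W=J_{r+1}(W)$, so $W$ is a $\Cl_{r,1}$-submodule. I would claim that $\langle\cdot\,,\cdot\rangle_V$ restricted to $W$ vanishes identically. Indeed, were the restriction nondegenerate, $W$ would simultaneously be an admissible $\Cl_{r,0}$-module and an admissible $\Cl_{r,1}$-module; the space of $\Cl_{r,0}$-invariant symmetric bilinear forms on an irreducible $\Cl_{r,0}$-module is one-dimensional over $\R$ (by Schur's lemma applied to the commutant $\mathbb R$, $\mathbb C$, or $\mathbb H$, whose generators are skew-adjoint with respect to the positive-definite admissible form), so the restricted form would be a real scalar multiple of a positive-definite one, i.e.\ sign-definite. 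But Proposition~\ref{prop:1} requires every admissible $\Cl_{r,1}$-module to be neutral, yielding a contradiction for $W\neq\{0\}$. Hence $W$ is totally isotropic, and nondegeneracy of $\langle\cdot\,,\cdot\rangle_V$ gives $W\subseteq W^{\perp}$, so $\dim V\geq 2\dim W=2\dim U$.

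The main technical point is the one-dimensionality of the space of invariant symmetric bilinear forms on an irreducible $\Cl_{r,0}$-module. This is the only step requiring slightly more than the general properties already collected in Section~\ref{sec:aux}, and it rests on a case analysis of the commutant being $\R$, $\mathbb C$, or $\mathbb H$ together with the skew-adjointness of the generators of $\mathbb C$ or $\mathbb H$ with respect to the positive-definite admissible form on the classical $H$-type module.
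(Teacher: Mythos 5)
Your proposal is correct, and it supplies something the paper's own treatment essentially only asserts: a rigorous lower bound. In the paper, Corollary~\ref{cor:2} is deduced from the last sentence of Theorem~\ref{prop:r0tor1}, whose proof simply states that $W=U\oplus\widetilde U$ ``has a minimal possible dimension if $U$ has minimal dimension'' without further argument. You instead split the statement into two inequalities. The upper bound you draw from Theorem~\ref{prop:r0tor1} (modulo verifying the orthogonality hypothesis of that theorem, which the paper handles via Lemma~\ref{orthogonal} and Corollary~\ref{cor:orthogonal} as in \S\ref{41}). The lower bound is your genuinely different contribution: pick an irreducible $\Cl_{r,0}$-submodule $W$, note $J_{r+1}(W)$ is again one, and either the sum is direct (giving dimension $\ge 2\dim W$) or $W$ is a $\Cl_{r,1}$-submodule; in the latter case the restriction of the form to $W$ must vanish, so $W\subseteq W^\perp$ and again $\dim V\ge 2\dim W$. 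This isotropy dichotomy for $W$ is exactly what the paper's Lemma~\ref{prop:2} provides (the form on an irreducible module is nondegenerate or zero), and you rule out the nondegenerate case by the correct observation that an invariant symmetric form on an irreducible $\Cl_{r,0}$-module is definite while an admissible $\Cl_{r,1}$-module must be neutral by Proposition~\ref{prop:1}.

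The one step that deserves to be stated more directly is the Schur-type claim that the space of invariant symmetric bilinear forms on an irreducible $\Cl_{r,0}$-module is one-dimensional. Your framing in terms of skew-adjointness of the $\mathbb C$- or $\mathbb H$-generators of the commutant is correct but invites a case check. A cleaner route avoiding that: fix the positive-definite admissible form $(\cdot\,,\cdot)$; any other invariant symmetric form is $(A\cdot\,,\cdot)$ with $A$ self-adjoint and commuting with all $J_i$. A self-adjoint $A$ is diagonalizable with orthogonal real eigenspaces; each eigenspace is $\Cl_{r,0}$-invariant since $A$ commutes with the $J_i$; irreducibility forces a single eigenspace, so $A$ is a real scalar and the form is $\lambda(\cdot\,,\cdot)$. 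This gives definiteness without inspecting the division algebra, and plugs directly into your contradiction with neutrality. With that small rewording the argument is complete and, in my view, a cleaner justification of the corollary than the one the paper gives.
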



\section{Integral structure on admissible $\Cl_{r,s}$-modules with $r+s\leq 8$}
\label{sec:rs}


We show the existence of an admissible integral
$\Cl_{r,s}$-module for $r+s=2,\ldots,8$ by direct construction. In these constructions we start
from an admissible module $(V,\langle\cdot\,,\cdot\rangle_{V})$ and
find an orthonormal set $\{v_{\alpha}\}_{\alpha=1}^{l}$ such that the
sub-module $(W,\langle\cdot\,,\cdot\rangle_{W})$, where
$W=\spn\{v_1,\ldots,v_l\}$ and the scalar product
$\langle\cdot\,,\cdot\rangle_{W}$ is the restriction
of $\langle\cdot\,,\cdot\rangle_{V}$ onto $W$, is an
admissible integral and has the minimal possible dimension. 
In some cases $W$ is not an irreducible sub-module. 


\subsection{Integral structure on $\Cl_{r,s}$-admissible modules with $r+s=2$}\label{111}



\subsubsection{Integral structure on admissible $\Cl_{2,0}$-module}\label{20}


The Clifford algebra $\Cl_{2,0}$ is isomorphic to the space $\mathbb H$.
Let $z_1,z_2$ be orthonormal generators of $\Cl_{2,0}$ with
$( z_i,z_i)_{\mathbb R^{2,0}}=1$, $i=1,2$. 
Let $(V,(\cdot\,,\cdot)_{V})$ be an admissible $\Cl_{2,0}$-module with positive definite inner product, then 
$
J_i^2:=J^2_{z_i}=-( z_i,z_i)_{\mathbb R^{2,0}}\Id_V=-\Id_V$, for $i=1,2$.
Choose $w\in V$ such that $(w,w)_{V}=1$. Then the basis
\begin{equation}\label{eq:basis11}
v_1=w,\quad v_2=J_1w,\quad v_3=J_2w,\quad v_4=J_2J_1w
\end{equation}  
is orthogonal and satisfies 
$
(v_\alpha,v_\alpha)_{V}=1$ for $\alpha=1,2,3,4$.
It is easy to see that the admissible module is integral.  The
admissible module is irreducible as in all case corresponding to $\Cl_{r,0}$ algebras. 


\subsubsection{Integral structure on admissible $\Cl_{1,1}$-module}\label{11}


The Clifford algebra $\Cl_{1,1}$ is isomorphic to the space $\mathbb R(2)$ of $(2\times 2)$-matrices with real entries.
Let $z_1,z_2$ be orthonormal generators of $\Cl_{1,1}$ with
$\langle z_1,z_1\rangle_{\mathbb R^{1,1}}=1$ and $\langle
z_2,z_2\rangle_{\mathbb R^{1,1}}=-1$. 
Let $(V,\langle\cdot\,,\cdot\rangle_{V})$ be an admissible $\Cl_{1,1}$-module, then 
$$
J_1^2:=J^2_{z_1}=-\langle z_1,z_1\rangle_{\mathbb R^{1,1}}\Id_V=-\Id_V,\quad J_2^2:=J_{z_2}^2=-\langle z_2,z_2\rangle_{\mathbb R^{1,1}}\Id_V=\Id_V.
$$
Choose $w\in V$ such that $\langle w,w\rangle_{V}=1$. Then the basis~\eqref{eq:basis11}
is orthogonal and satisfies 
$
\langle v_1,v_1\rangle_{V}=\langle v_2,v_2\rangle_{V}=-\langle v_3,v_3\rangle_{V}=-\langle v_4,v_4\rangle_{V}=1$.
It is easy to see that the admissible module is integral.  The
admissible module is not irreducible in this case since the dimension of an irreducible module is $2$. 

{\it Notice that we could show the existence of the integral structure by making use of the isomorphism
$
\Cl_{r,s+1}=\Cl_{0,2}\cong\Cl_{1,1}=\Cl_{s,r+1}$, see Theorem~\ref{prop:7},
but we prefer the direct construction, since we will use this construction further.}


\subsubsection{Integral structure on admissible $\Cl_{0,2}$-module}\label{02}


The admissible integral $\Cl_{0,2}$-module was constructed in Section~\ref{sec:0n}.


\subsection{Integral structure on admissible $\Cl_{r,s}$-modules with $r+s=3$}\label{301}



\subsubsection{Integral structure on admissible $\Cl_{3,0}$-module}\label{30}


The Clifford algebra $\Cl_{3,0}$ is isomorphic to the space 
$\mathbb H\oplus \mathbb H$.
Let $z_1,z_2,z_3$ be orthogonal generators of $\Cl_{3,0}$ 
with $(z_i,z_i)_{\mathbb R^{3,0}}=1$, $i=1,2,3$,  
and $(V,(\cdot\, ,\cdot)_{V})$ an admissible $\Cl_{3,0}$-module with positive definite metric. Then 
$$
J_i^2:=J^2_{z_i}=-(z_i,z_i)_{\mathbb R^{3,0}}\Id_V=-\Id_V.
$$
Consider the isometric involution $P=J_1J_2J_3$ and pick up a vector $w\in V$ such that $J_1J_2J_3w=w$ and
$(w,w)_{V}=1$. It is possible, since the 1-eigenspace is an inner product space.
We get the following linear dependent vectors
\begin{equation}\label{eq:30}
J_1w=-J_2J_3w,\quad J_2w=J_1J_3w,\quad J_3w=J_1J_2w.
\end{equation}
In this case we choose the basis 
$v_1=w$, $v_2=J_1w$, $v_3=J_2w$, $v_4=J_3w$. 
It is orthogonal due to the skew symmetry of $J_i$, $i=1,2,3$.
Moreover $(v_\alpha,v_\alpha)_{V}=1$, $\alpha=1,2,3,4$.
It is easy to see that the sub-module $W=\spn\{v_1,\ldots,v_4\}$, 
is integral due to the orthogonality of generators, skew symmetry of
$J_k$ and the condition $J_1J_2J_3w=w$. The admissible integral
sub-module is irreducible
with an inner product.


\subsubsection{Integral structure on admissible $\Cl_{2,1}$-module}\label{21}


The Clifford algebra $\Cl_{2,1}$ is isomorphic to the space $\mathbb C(2)$ of $2\times 2$ matrices with complex entries.
Let $z_1,z_2,z_3$ be generators of $\Cl_{2,1}$ with $\langle z_i,z_i\rangle_{\mathbb R^{2,1}}=1$, $i=1,2$ and $\langle z_3,z_3\rangle_{\mathbb R^{2,1}}=-1$. Let $(V,\langle\cdot\, ,\cdot\rangle_{V})$ be an admissible $\Cl_{2,1}$-module. Then 
$
J_i^2:=J^2_{z_i}=-\Id_V$, $i=1,2$, $J_3^2:=J^2_{z_3}=\Id_V$.
We apply Theorem~\ref{prop:r0tor1} and chose the following basis 
$$
\begin{array}{lllll}
&v_1=w, \quad &v_2=J_1w,\quad &v_3=J_2w,\quad &v_4=J_1J_2w,
\\
&v_5=J_3w,\quad &v_6=J_1J_3w,\quad &v_7=J_2J_3w,\quad &v_8=J_1J_2J_3w.
\end{array}
$$  
If the vectors $w$ and $J_1J_2J_3w$ are not orthogonal, we apply Lemma~\ref{orthogonalization} and find the correct vector $\widetilde w$ making basis orthogonal.
Moreover 
$
\langle v_\alpha,v_\alpha\rangle_{V}=1$, $\alpha=1,\ldots,4$, $\langle v_\alpha,v_\alpha\rangle_{V}=-1$, $\alpha=5,\ldots,8$.
It is easy to see that the module is integral since $J_i$, $i=1,2,3$, permute the basis up to sign. The admissible integral sub-module is not irreducible since the dimension of the irreducible module is equal to $4$.

\begin{remark}
Let $v\in V$ be a non-null vector and assume that one of $J_iJ_jv$ with $i<j$ is
independent from $v$, $J_1v $, $J_2v$, $J_3v$. Then the dimension of the admissible subspace
is at least 8. Together with the construction above the irreducible
module can not be admissible with any choice of non-null vector.
\end{remark}

We also can use isomorphism $\Cl_{0,3}\cong\Cl_{2,1}$ and Theorem~\ref{prop:7} to prove the existence of integral structure on admissible $\Cl_{2,1}$-module.


\subsubsection{Integral structure on admissible $\Cl_{1,2}$-module}\label{12}


The Clifford algebra $\Cl_{1,2}$ is isomorphic to the space $\mathbb R(2)\times\mathbb R(2)$.
Let $z_1,z_2,z_3$ be generators of $\Cl_{1,2}$ with $\langle z_1,z_1\rangle_{\mathbb R^{1,2}}=1$, $\langle z_i,z_i\rangle_{\mathbb R^{1,2}}=-1$, $i=2,3$, and $(V,\langle\cdot\, ,\cdot\rangle_{V})$ an admissible $\Cl_{1,2}$-module. Then 
$$
J_1^2:=J^2_{z_1}=-\Id_V,\qquad
J_i^2:=J^2_{z_i}=\Id_V \quad i=2,3.
$$
Consider the isometric involution $P=J_1J_2J_3$. The operator $P$ commutes with the anti-isometry $T=J_2$. Thus the eigenspace $E_+$ of $P$ is a neutral space by Lemma~\ref{lem:PT1}, case 3).  We pick up a vector $w\in V$ such that $Pw=w$ and $\langle w,w\rangle_{V}=1$. 
It gives the linear dependent vectors~\eqref{eq:30}.
In this case we choose the basis $v_1=w$, $v_2=J_1w$, $v_3=J_2w$, $v_4=J_3w$, that is orthogonal as it was shown in the case of $\Cl_{3,0}$-module. Moreover 
$
\langle v_1,v_1\rangle_{V}=\langle v_2,v_2\rangle_{V}=-\langle v_3,v_3\rangle_{V}=-\langle v_4,v_4\rangle_{V}=1$.
It is easy to see that the module is integral due to the orthogonality
of generators, skew symmetry of $J_k$ and the condition
$J_1J_2J_3w=w$. The admissible integral module is not irreducible 
since the dimension of the irreducible module is equal to~$2$. 


\subsubsection{Integral structure on admissible $\Cl_{0,3}$-module}\label{03}


The admissible integral $\Cl_{0,3}$-module was constructed in Section~\ref{sec:0n}.


\subsection{Integral structure on admissible $\Cl_{r,s}$-modules with $r+s=4$}\label{401}



\subsubsection{Integral structure on admissible $\Cl_{4,0}$-module}\label{40}


The Clifford algebra $\Cl_{4,0}$ is isomorphic to the space $\mathbb H(2)$.
Let $z_1,z_2,z_3,z_4$ be orthogonal 
generators of $\Cl_{4,0}$ with $(z_i,z_i)_{\mathbb R^{4,0}}=1$, 
$i=1,2,3,4$, and let $(V,(\cdot\, ,\cdot)_{V})$ 
be an admissible $\Cl_{4,0}$-module with positive definite metric. Then 
$J_i^2:=J^2_{z_i}=-\Id_V$, $i=1,2,3,4$.
As in the case of $\Cl_{0,4}$-module we consider the isometric involution $P=J_1J_2J_3J_4$ and pick up
$w\in V$ such that $Pw=w$ and $(w,w)_{V}=1$. 
It leads to the linear dependence relations similar to~\eqref{eq:Cl04_2}, where some of them can change sign.
The basis~\eqref{basis 04} is orthonormal with respect to $(\cdot\,
,\cdot)_{V}$. The maps $J_i$, $i=1,2,3,4$, permute the basis up to
sign. The admissible integral sub-module is irreducible with the inner
product.


\subsubsection{Integral structure on admissible $\Cl_{3,1}$-module}\label{secCl31}


The Clifford algebra $\Cl_{3,1}$ is isomorphic to the space $\mathbb H(2)$.
The integral structure on 
admissible $\Cl_{3,1}$-modules exists 
according to Theorem~\ref{prop:7} and Corollary~\ref{cor:3} or Theorem~\ref{prop:r0tor1}.


\subsubsection{Integral structure on admissible $\Cl_{2,2}$-module}\label{22}


The Clifford algebra $\Cl_{2,2}$ is isomorphic to the space $\mathbb R(4)$.
Let $z_1,z_2,z_3,z_4$ be orthogonal 
generators of $\Cl_{2,2}$ with 
$\langle z_i,z_i\rangle_{\mathbb R^{2,2}}=1$, 
$i=1,2$, $\langle z_j,z_j\rangle_{\mathbb R^{2,2}}=-1$, $j=3,4$, 
and $(V,\langle\cdot\, ,\cdot\rangle_{V})$ an admissible $\Cl_{2,2}$-module. Then 
$$
J_i^2:=J^2_{z_i}=-\Id_V, \quad i=1,2,\quad J_j^2:=J^2_{z_j}=\Id_V,\quad j=3,4.
$$
We consider the isometric involution $P=J_1J_2J_3J_4$ and the isometry $T=J_1$ such that $PT=-TP$. 
We choose a vector $w\in V$ such
that $Pw=w$ and $\langle w ,w\rangle_{V}=1$ by Lemma~\ref{lem:PT1}, part 1). 
The linear dependence relations~\eqref{eq:Cl04_2} still hold up to sign.
We write the basis~\eqref{basis 04} in the form
$$
\begin{array}{lllllll}
&v_1=w,\quad &v_2=J_1w,\quad &v_3=J_2w,\quad &v_4=J_1J_2w,
\\
&v_5=J_3w,\quad &v_6=J_4w,\quad &v_7=J_1J_3w,\quad &v_8=J_1J_4w.
\end{array}
$$
It is orthonormal and 
$
\langle v_\alpha,v_\alpha\rangle_{V}=1$, $\alpha=1,2,3,4$, $\langle v_\alpha,v_\alpha\rangle_{V}=-1$, $ \alpha=5,6,7,8$.
Since $J_i$, $i=1,2,3,4$, permute basis vectors up to sign the
sub-module $W=\spn\{v_1,\ldots,v_8\}$ is integral. The constructed
module is not irreducible because the dimension of an irreducible
module is $4$ and with any choice of a non-null vector $v\in V$ the five
vectors $v$, $J_1v$, $J_2v$, $J_3v$, $J_4v$ are already linear independent.

We give an alternative construction of an admissible $\Cl_{2,2}$-module. 
As it was mentioned the $\pm 1$-eigenspaces $E_{\pm}$ of $P$
have equal dimension. Consider two operators $\hat J_1=J_1J_2$ and $\hat J_2=J_1J_3$. Since they commute with $P$ they leave invariant $E_+$. Moreover
$$
\hat J_1^2=-\Id_{E_+},\ \ \hat J_2^2=\Id_{E_+},\qquad \hat J_1\hat J_2=-\hat J_2\hat J_1.
$$
Thus, the algebra generated by
$\hat J_1$ and $\hat J_2$ in $\End(E_+)$ is isomorphic to the
Clifford algebra $\Cl_{1,1}$ and the representation is admissible,
since, for example,
\[
\langle\hat J_1u,v\rangle_{E_+}  = \langle J_1J_2u,v\rangle_{E_+}=-\langle J_2u,J_1v\rangle_{E_+}=\langle u,J_2J_1v\rangle_{E_+}=-\langle u,J_1J_2v\rangle_{E_+}
=-\langle u,\hat J_1v\rangle_{E_+}.
\]
The same arguments valid for $E_-$.
Because $\dim(E_+)=4$ and $E_{+}\perp E_-$, we have an integral structure on $V=E_+\oplus_{\perp}E_-$ inherited from that of $\Cl_{1,1}$. 


\subsubsection{Integral structure on admissible $\Cl_{1,3}$-module}
\label{13}


The Clifford algebra $\Cl_{1,3}$ is isomorphic to the space $\mathbb R(4)$.
In this case we can use Theorem~\ref{prop:7} since the Clifford algebra $\Cl_{r,s+1}=\Cl_{2,2}$ is isomorphic to $\Cl_{s,r+1}=\Cl_{1,3}$. The orthogonal basis changed to
$$
\begin{array}{lllllll}
&v_1=w,\quad &v_2=J_2J_4w,\quad &v_3=J_3J_4w,\quad &v_4=-J_2J_3w,
\\
&v_5=J_1J_4w,\quad &v_6=J_4w,\quad &v_7=J_1J_2w,\quad &v_8=J_2w,
\end{array}
$$
with
$
\langle v_\alpha,v_\alpha\rangle_{V}=1$, $\alpha=1,2,3,4$, $\langle v_\alpha,v_\alpha\rangle_{V}=-1$, $ \alpha=5,6,7,8$.
Then the constructed sub-module $W=\spn\{v_1,\ldots,v_8\}$ is admissible and
integral, but is not irreducible because the dimension of an
irreducible module is $4$.


\subsubsection{Integral structure on admissible $\Cl_{0,4}$-module}


The admissible integral $\Cl_{0,4}$-module was constructed in Section~\ref{sec:0n}.


\subsection{Integral structure on admissible $\Cl_{r,s}$-modules with $r+s=5$}
\label{sec:Cl051}



\subsubsection{Integral structure on admissible $\Cl_{5,0}$-module}
\label{sec:Cl05}


The Clifford algebra $\Cl_{5,0}$ is isomorphic to the space $\mathbb C(4)$.
Let $z_1,\dots,z_5$ be orthonormal 
generators of $\Cl_{5,0}$ with 
$(z_i,z_i)_{\mathbb{R}^{5,0}}=1$, $i=1,\ldots 5$, 
and $(V,\langle\cdot\,,\cdot\rangle_{V})$ an admissible $\Cl_{5,0}$-module with a neutral product. Then 
$
J_i^2:=J^2_{z_i}=-\Id_V$ for $i=1,\ldots,5$.
In this case we fix two mutually commuting isometric involutions
$P_1=J_1J_2J_3J_4$ and $P_2=J_1J_2J_5$.
Since we can find complementary isometries $T_1=J_1$ and
$T_2=J_2J_3$ which
satisfy the relations 
\begin{equation}\label{eq:PT50}
P_1T_1=-T_1P_1,\quad P_1T_2=T_2P_1\qquad P_2T_2=-T_2P_2,
\end{equation}
we may pick up a vector $w\in V$ such that $P_1w=P_2w=w$ 
and $\langle w,w\rangle_{V}=1$ by applying Lemma~\ref{lem:PT1}, part 1) twice. 
Then the simultaneous eigenspace decomposition, presented in ????? 5,
{\small
\begin{table}[h]
\begin{center}
\caption{Eigenspace decomposition: $\Cl_{5,0}$ case}
\begin{tabular}{|l|c|c|c|c|}
\hline
\text{Involution}&\multicolumn{4}{|c|}{\text{Eigenvalue}}\\
\hline
$P_1$&\multicolumn{2}{|c|}{$+1$}&\multicolumn{2}{|c|}{$-1$}\\
\hline
$P_2$&\multicolumn{1}{|c|}{$+1$}&\multicolumn{1}{|c|}{$-1$}
&\multicolumn{1}{|c|}{$+1$}&\multicolumn{1}{|c|}{$-1$}\\\hline\hline
{Eigenvectors}
&$w, J_5w$&$J_1J_3w, J_1J_4w$& $J_1w, J_2w$&$J_3w,J_4w$\\\hline
\end{tabular}
\end{center}
\end{table}
}
allows to chose the orthonormal basis 
$$
\begin{array}{llllllll}
&v_1=w,\quad &v_2=J_1w,\quad &v_3=J_2w,\quad &v_4=J_3w,
\\
&v_5=J_4w,\quad &v_6=J_5w,\quad &v_7=J_1J_3w,\quad &v_8=J_1J_4w,
\end{array}
$$
with $\langle v_\alpha,v_\alpha\rangle_V=1$, $\alpha=1,\ldots,8$.
Relations~\eqref{eq:Cl04_2} and those coming from $J_1J_2J_5w=w$ shows that the action of $J_j$, $j=1\ldots,5$, permutes basis up to sign.
Hence we have an integral structure of a minimal dimensional
admissible sub-module of $\Cl_{5,0}$-module with an inner product, see Remark~\ref{rem:product} and an observation in Section~\ref{fremark}. The admissible module is irreducible.


\subsubsection{Integral structure on admissible $\Cl_{4,1}$-module}
\label{41}


The admissible $\Cl_{4,1}$-module is integral by
Theorem~\ref{prop:r0tor1}. If the decomposition on $U$ spanned
by~\eqref{basis 04} and the image $\widetilde J_5(U)$ is not
orthogonal, 
then we change the vector $w$ to $\widetilde w$ by
Lemma~\ref{orthogonal} and Corollary~\ref{cor:orthogonal}. We also can
use Theorem~\ref{prop:7} and Corollary~\ref{cor:3} 
to show the existence of an integral structure.


\subsubsection{Integral structure on admissible $\Cl_{3,2}$-module}


The Clifford algebra $\Cl_{3,2}$ is isomorphic to the space $\mathbb C(4)$.
Let $z_1,\dots,z_5$ be orthogonal 
generators of $\Cl_{3,2}$, and $(V,\langle\cdot\,,\cdot\rangle_{V})$ an admissible $\Cl_{3,2}$-module. Then 
$J_i^2:=J^2_{z_i}=-\Id_V$, $i=1,2,3$, $J_j^2:=J^2_{z_j}=\Id_V$, for $j=4,5$.
The construction is essentially the same as for $\Cl_{5,0}$-module. 
In this case we choose the mutually commuting isometric 
involutions $P_1=J_2J_3J_4J_5$ and $P_2=J_1J_2J_3$ and 
complementary isometries $T_1=J_2$ and $T_2=J_2J_4$, that satisfy relations~\eqref{eq:PT50}. 
We pick up  a vector $w\in V$ such that $P_1w=P_2w=w$ and $\langle
w,w\rangle_{V}=1$, which existence is guaranteed by Lemma~\ref{lem:PT1}, part1). 
The orthonormal basis is
$$
\begin{array}{lllll}
&v_1=w,\quad &v_2=J_1w,\quad &v_3=J_2w,\quad &v_4=J_3w,
\\
&v_5=J_4w,\quad &v_6=J_5w,\quad &v_7=J_2J_4w,\quad &v_8=J_2J_5w,
\end{array}
$$
with $\langle v_{\alpha},v_{\alpha}\rangle_{V}=1$, $\alpha=1,\ldots,4$ and $\langle v_{\alpha},v_{\alpha}\rangle_{V}=-1$, $\alpha=5,\ldots,8$. In this case we will have enough linear dependent relations showing that action of $J_i$, $i=1,\ldots,5$, permutes basis vectors up to sign. These relations are analogous to~\eqref{eq:Cl04_2} and those arising from $P_2w=w$. The constructed module is admissible, integral, and irreducible.


\subsubsection{Integral structure on admissible $\Cl_{2,3}$-module}


The Clifford algebra $\Cl_{2,3}$ is isomorphic to the space $\mathbb R(4)\oplus\mathbb R(4)$.
Let $z_1,\dots,z_5$ be generators of $\Cl_{2,3}$, 
and $(V,\langle\cdot\,,\cdot\rangle_{V})$ an admissible $\Cl_{2,3}$-module. Then 
$
J_i^2:=J^2_{z_i}=-\Id_V$, $i=1,2$, $J_j^2:=J^2_{z_j}=\Id_V$, $j=3,4,5$.

We fix two mutually commuting isometric involutions $P_1=J_1J_2J_3J_4$ and
$P_2=J_1J_4J_5$ and two complementary isometries 
$T_1=J_1$ and $T_2=J_1J_2$ which satisfy~\eqref{eq:PT50}. The
common eigenspaces of $P_1$ and $P_2$ are neutral spaces by Lemma~\ref{lem:PT1}, part 1). So 
we find a common eigenvector $w\in E_{1+}\cap E_{2+}\subset V$ such that $P_1w=P_2w=w$ with $\langle w,w\rangle_V=1$.
We have the following
simultaneous eigenspace decomposition of the representation space $V$ by $P_1$ and $P_2$ with mutually
orthogonal eigenvectors:
{\small
\begin{table}[h]
\begin{center}
\caption{Eigenspace decomposition: $\Cl_{2,3}$ case}
\begin{tabular}{|l|c|c|c|c|}
\hline
\text{Involution}&\multicolumn{4}{|c|}{\text{Eigenvalue}}\\
\hline
$P_1$&\multicolumn{2}{|c|}{$+1$}&\multicolumn{2}{|c|}{$-1$}\\
\hline
$P_2$&\multicolumn{1}{|c|}{$+1$}&\multicolumn{1}{|c|}{$-1$}
&\multicolumn{1}{|c|}{$+1$}&\multicolumn{1}{|c|}{$-1$}\\\hline\hline
{Eigenvectors}
&$w, J_1J_4w$&$J_5w, J_1J_2w$& $J_1w, J_4w$&$J_2w, J_3w$\\\hline
\end{tabular}
\end{center}
\end{table}
}
It gives the orthonormal basis 
$$
\begin{array}{lllll}
&v_1=w,\quad &v_2=J_1w,\quad &v_3=J_2w,\quad &v_4=J_1J_2w,
\\
&v_5=J_3w,\quad &v_6=J_4w,\quad &v_7=J_5w,\quad &v_8=J_1J_4w,
\end{array}
$$
with $\langle v_{\alpha},v_{\alpha}\rangle_{V}=1$, $\alpha=1,\ldots,4$ and $\langle v_{\alpha},v_{\alpha}\rangle_{V}=-1$, $\alpha=5,\ldots,8$.
As in previous cases we can show that $J_i$, $i=1,\ldots,5$ permute basis vectors up to sign.
The constructed module is admissible integral, but not irreducible because
we know that with any choice of non-null vector $v\in V$ the 6 vectors 
$v$, $J_1v$, $J_2v$, $J_3v$, $J_4v$, $J_5v$ are linearly independent but the
dimension of the irreducible module is $4$.


\subsubsection{Integral structure on admissible $\Cl_{1,4}$-module}


The Clifford algebra $\Cl_{1,4}$ is isomorphic to the space $\mathbb
C(4)$.
We can apply Theorem~\ref{prop:7} and Corollary~\ref{cor:3}, since $\Cl_{1,4}\cong \Cl_{3,2}$.


\subsubsection{Integral structure on admissible $\Cl_{0,5}$-module}


The admissible integral $\Cl_{0,5}$-module was constructed in Section~\ref{sec:0n}.


\subsection{Integral structure on admissible $\Cl_{r,s}$-modules with $r+s=6$}



\subsubsection{Integral structure on admissible $\Cl_{6,0}$-module}


The Clifford algebra $\Cl_{6,0}$ is isomorphic to the space $\mathbb R(8)$.
Let $z_1,\dots,z_6$ be orthonormal generators of $\Cl_{6,0}$, 
and $(V,\langle\cdot\,,\cdot\rangle_{V})$ an admissible $\Cl_{6,0}$-module. Then 
$
J_i^2:=J^2_{z_i}=-\Id_V$, $i=1,\ldots,6$.
We consider three mutually commuting isometric involutions
$$
P_1=J_1J_2J_3J_4,\quad P_2=J_1J_2J_5J_6,\quad P_3=J_1J_4J_5. 
$$
We start from the common eigenvector $w\in V$:
\begin{align}\label{equ60}
P_1w=P_2w=P_3w=w, \quad \langle w,w\rangle_{V}=1.
\end{align}
The existence of such $w$ is guaranteed by the complementary isometries 
$T_1=J_1$, $T_2=J_5$ and $T_3=J_5J_6$ and Lemma~\ref{lem:PT1}, part 1). We sum up the relations between these two families of operators in the following tables
$$
\begin{array}{|l|c|c|c|c|c|c|}\hline
\text{\small{Involutions}}\backslash{\text{\small{Generators}}} &J_1&J_2&J_3&J_4&J_5&J_6\\\hline
P_1=J_1J_2J_3J_4&a&a&a&a&c&c\\\hline
P_2=J_1J_2J_5J_6&a&a&c&c&a&a\\\hline
P_3=J_1J_4J_5     &c&a&a&c&c&a\\\hline
\end{array}
$$
$$
\begin{array}{|l|c|c|c|}\hline
\text{Involutions}
\backslash{\text{Comp. op.}} &J_1(+\to +)&J_5(+\to +)&J_5J_6(+\to +)\\\hline
P_1=J_1J_2J_3J_4&a&c&c\\\hline
P_2=J_1J_2J_5J_6&&a&c\\\hline
P_3=J_1J_4J_5&&&a\\\hline
\end{array}
$$

{\small
\begin{table}[h]
\begin{center}
\caption{Eigenspace decomposition: $\Cl_{6,0}$ case}
\vspace{-3.6mm}
\begin{tabular}{|l|c|c|c|c|c|c|c|c|} \hline
Involutions&\multicolumn{8}{|c|}{Eigenvalues}\\ \hline
$P_1$&\multicolumn{4}{|c|}{$+1$} &\multicolumn{4}{|c|}{$-1$}\\\hline
$P_2$&\multicolumn{2}{|c|}{$+1$}&\multicolumn{2}{c}{$-1$}&\multicolumn{2}{|c|}{$+1$}&\multicolumn{2}{|c|}{$-1$}\\\hline
$P_3$&{$+1$}&{$-1$}&{$+1$}&$-1$&{$+1$}&{$-1$}&$+1$&$-1$\\\hline\hline
{Eigenvector}&$w$ &$J_{1}J_{2}w$&$J_{5}w$&$J_6w$&$J_4w$&$J_{3}w$&$J_{1}w$&$J_2w$\\\hline
\end{tabular}
\end{center}
\end{table}
}
We get 8 orthonormal vectors
$$
\begin{array}{lllll}
&v_1=w,\quad &v_2=J_1w,\quad &v_3=J_2w,\quad &v_4=J_3w,
\\
&v_5=J_4w,\quad &v_6=J_5w,\quad &v_7=J_6w,\quad &v_8=J_1J_2w,
\end{array}
$$
with $\langle v_{\alpha},v_{\alpha}\rangle_V=1$.
Any two of these 8 vectors does not belong 
to a common eigenspace of $P_1$, $P_2$ and $P_3$, which
says that they are orthogonal. Relations~\eqref{equ60} lead to relations~\eqref{eq:Cl06_2}, that still hold up to sign, and additional ones, arising from $P_3w=w$. It shows that each
operator $J_i$ $i=1,\ldots,6$ permutes these basis up to the sign.
The sub-module spanned by theses 8 vectors is admissible integral irreducible sub-module with positive definite inner product, see Remark~\ref{rem:product} and observation in Section~\ref{fremark}.


\subsubsection{Integral structure on admissible $\Cl_{5,1}$-module}


The Clifford algebra $\Cl_{5,1}$ is isomorphic to the space $\mathbb
H(4)$. We can apply Theorem~\ref{prop:r0tor1} or Theorem~\ref{prop:7} and Corollary~\ref{cor:3} to show the existence of an integral structure.


\subsubsection{Integral structure on admissible $\Cl_{4,2}$-module}


The Clifford algebra $\Cl_{4,2}$ is isomorphic to the space $\mathbb H(4)$.
Let $z_1,\dots,z_6$ be generators of $\Cl_{4,2}$, and $(V,\langle\cdot\,,\cdot\rangle_{V})$ an admissible $\Cl_{4,2}$-module. Then 
$
J_i^2:=J^2_{z_i}=-\Id_V$, $i=1,\ldots,4$, $J_j^2:=J^2_{z_j}=\Id_V$ for $j=5,6$.
We fix two mutually commuting isometric involutions $P_1=J_1J_2J_3J_4$ and
$P_2=J_1J_2J_5J_6$. Then we have two complementary isometries
$T_1=J_1$, $T_2=J_2J_3$
which satisfy relations~\eqref{eq:PT50}.
So, the simultaneous eigenspace $E_{1+}\cap E_{2+}$ of $P_1$  and $P_2$ is neutral by Lemma~\ref{lem:PT1}, part 1).
Hence we find a common eigenvector $w\in V$ satisfying $P_1w=P_2w=w$ with $\langle w,w\rangle_V=1$.
It gives a simultaneous eigenspace
decomposition of the representation space $V$, generated by Clifford multiplication of $w$, and which is presented in Table 8.
{\tiny{\small
\begin{table}[h]
\begin{center}
\caption{Eigenspace decomposition: $\Cl_{4,2}$ case}
\begin{tabular}{|l|c|c|c|c|} \hline
Involutions&\multicolumn{4}{|c|}{Eigenvalues}\\ \hline
$P_1$&\multicolumn{2}{|c|}{$+1$} &\multicolumn{2}{|c|}{$-1$}\\\hline
$P_2$&\multicolumn{1}{|c|}{$+1$}&\multicolumn{1}{c}{$-1$}&\multicolumn{1}{|c|}{$+1$}&\multicolumn{1}{|c|}{$-1$}\\\hline
{Eigenvector}
&$w, J_1J_2w$&$J_{5}w, J_{6}w$&$J_{3}w, J_4w$&$J_{1}w, J_{2}w$\\
&$J_1J_3J_5w, J_2J_3J_5w$&$J_{1}J_3w, J_{1}J_4w$&$J_1J_5w, J_{1}J_{6}w$&$J_3J_{5}w, J_4J_{6}w$\\\hline
\end{tabular}
\end{center}
\end{table}
}}

We chose the basis
$$
\begin{array}{lllll}
&v_1=w,\quad &v_2=J_1w,\quad &v_3=J_2w,\quad &v_4=J_3w,
\\
&v_5=J_4w,\quad &v_6=J_1J_2w,\quad &v_7=J_1J_3w,\quad &v_8=J_1J_4w,
\\
&v_9=J_5w,\quad &v_{10}=J_6w,\quad &v_{11}=J_1J_5w,\quad &v_{12}=J_1J_6w,
\\
&v_{13}=J_3J_5w,\quad &v_{14}=J_4J_6w,\quad &v_{15}=J_1J_3J_5w,\quad &v_{16}=J_2J_3J_5w
\end{array}
$$
with $\langle v_{\alpha},v_{\alpha}\rangle_V=1$, $\alpha=1,\ldots,8$ and $\langle v_{\alpha},v_{\alpha}\rangle_V=-1$ for $\alpha=9,\ldots,16$.
The vectors can be made orthogonal if we apply Lemma~\ref{orthogonal} to operators 
$J_1J_3J_5$ and $J_2J_3J_5$. The relations~\eqref{eq:Cl06_2} show that all other relations will be made also orthogonal. They also proves that the operators $J_j$, $j=1,\ldots,6$, permute the basis up to sign.  
Hence we constructed a minimal admissible integral sub-module of $\Cl_{4,2}$-module of the
dimension $16$, which is irreducible.


\subsubsection{Integral structure on admissible $\Cl_{3,3}$-module}\label{sec:Cl33}


The Clifford algebra $\Cl_{3,3}$ is isomorphic to the space $\mathbb R(8)$.
Let $z_1,\dots,z_6$ be generators of $\Cl_{3,3}$, 
and $(V,\langle\cdot\,,\cdot\rangle_{V})$ an admissible $\Cl_{3,3}$-module. Then 
$
J_i^2:=J^2_{z_i}=-\Id_V$, $i=1,2,3$, $J_j^2:=J^2_{z_j}=\Id_V$ for $j=4,5,6$.
We argue similar to the case $\Cl_{6,0}$ but consider different isometric mutually commuting involutions
$$
P_1=J_1J_2J_4J_5,\quad P_2=J_2J_3J_5J_6,\quad P_3=J_1J_2J_3.
$$
Then we have two tables of commutation relations with the generators
$J_i$ and complementary operators.
{\small
$$
\begin{array}{|l|c|c|c|c|c|c|}\hline
\text{\small{Involutions}}\backslash{\text{\small{Generators}}} &J_1&J_2&J_3&J_4&J_5&J_6\\\hline
P_1=J_1J_2J_4J_5&a&a&c&a&a&c\\\hline
P_2=J_2J_3J_5J_6&c&a&a&c&a&a\\\hline
P_3=J_1J_2J_3&c&c&c&a&a&a\\\hline
\end{array}
$$
$$
\begin{array}{|l|c|c|c|}\hline
\text{Involutions}
\backslash{\text{Comp. op.}} &J_1(+\to +)&J_3(+\to +)&J_1J_4(+\to -)\\\hline
P_1=J_1J_2J_4J_5&a&c&c\\\hline
P_2=J_2J_3J_5J_6&&a&c\\\hline
P_3=J_1J_2J_3&&&a\\\hline
\end{array}
$$
}

The tables show that the common eigenspace $E_{1+}\cap E_{2+}$ of operators $P_1$ and
$P_2$ is neutral scalar product space by Lemma~\ref{lem:PT1}, part 1). If the restriction of $\langle\cdot\,,\cdot\rangle_V$ on $E_{1+}\cap E_{2+}\cap E_{3+}$ is negative definite, we apply procedure of Lemma~\ref{lemma:04} and change the operator $P_3=J_1J_2J_3$ to the operator $\hat P_3=J_2J_1J_3$. So we can find a vector $w\in E_{1+}\cap E_{2+}\cap E_{3+}\subset V$ with properties
$
P_1w=P_2w=P_3w=w$ and $\langle w,w\rangle_V=1$. 
Then we obtain Table 9, expressing the simultaneous eigenspace decomposition by the
involutions $P_1, P_2$ and $P_3$.
{\small
\begin{table}[h]
\begin{center}
\caption{Eigenspace decomposition: $\Cl_{3,3}$ case}
\begin{tabular}{|l|c|c|c|c|c|c|c|c|} \hline
Involutions&\multicolumn{8}{|c|}{Eigenvalues}\\ \hline
$P_1$&\multicolumn{4}{|c|}{$+1$} &\multicolumn{4}{|c|}{$-1$}\\\hline
$P_2$&\multicolumn{2}{|c|}{$+1$}&\multicolumn{2}{c}{$-1$}&\multicolumn{2}{|c|}{$+1$}&\multicolumn{2}{|c|}{$-1$}\\\hline
$P_3$&{$+1$}&{$-1$}&{$+1$}&$-1$&{$+1$}&{$-1$}&$+1$&$-1$\\\hline\hline
{Eigenvector}&$w$ &$J_{1}J_{4}w$&$J_3w$&$J_6w$&$J_{1}w$&$J_{4}w$&$J_2w$&$J_5w$\\\hline
\end{tabular}
\end{center}
\end{table}
}
It allows us to choose the orthonormal basis~\eqref{eq:basis33}.
\begin{equation}\label{eq:basis33}
\begin{array}{lllll}
&v_1=w,\quad &v_2=J_1w,\quad &v_3=J_2w,\quad &v_4=J_3w,
\\
&v_5=J_4w,\quad &v_6=J_5w,\quad &v_7=J_6w,\quad &v_8=J_1J_4w,
\end{array}
\end{equation}
with $\langle v_{\alpha},v_{\alpha}\rangle_V=1$, $\alpha=1,\ldots,4$ and $\langle v_{\alpha},v_{\alpha}\rangle_V=-1$ for $\alpha=5,\ldots,8$.
Moreover, as in the previous case of $\Cl_{0,6}$-module, relations $P_1w=P_2w=P_3w=w$ leave only 8 linear independent vectors and shows that operators $J_j$, $j=1,\ldots,6$ permute the basis up to sign. Finally we conclude that the minimal admissible integral sub-module of $\Cl_{3,3}$-module is of dimension
$8$ and it is irreducible.


\subsubsection{Integral structure on admissible $\Cl_{2,4}$-module}


The Clifford algebra $\Cl_{2,4}$ is isomorphic to the space $\mathbb
R(8)$. The integral structure exists according to Theorem~\ref{prop:7} and Corollar~\ref{cor:3}, since $\Cl_{3,3}\cong\Cl_{2,4}$.


\subsubsection{Integral structure on admissible $\Cl_{1,5}$-module}


The Clifford algebra $\Cl_{1,5}$ is isomorphic to the space $\mathbb
H(4)$ and the integral structure exists according to Theorem~\ref{prop:7} and Corollar~\ref{cor:3} by the isomorphism
$\Cl_{4,2}\cong\Cl_{1,5}$.


\subsubsection{Integral structure on admissible $\Cl_{0,6}$-module}


The integral admissible $\Cl_{0,6}$-module was constructed in Section~\ref{sec:0n}.


\subsection{Integral structure on admissible $\Cl_{r,s}$-modules with $r+s=7$}\label{sec:Cl700}



\subsubsection{Integral structure on admissible $\Cl_{7,0}$-module}\label{sec:Cl70}


The Clifford algebra $\Cl_{7,0}$ is isomorphic to the space $\mathbb R(8)\oplus \mathbb R(8)$.
Let $z_1,\dots,z_7$ be a set of orthonormal generators of $\Cl_{7,0}$, 
and $(V,\langle\cdot\,,\cdot\rangle_{V})$ an admissible $\Cl_{7,0}$-module. Then 
$
J_i^2:=J^2_{z_i}=-\Id_V$, $i=1,\ldots,7$.
We consider four isometric involutions commuting with each other:
$$
P_1=J_1J_2J_3J_4,\quad P_2=J_1J_2J_5J_6,\quad P_3=J_2J_3J_6J_7,\quad\text{and}\quad P_4=J_1J_4J_5.
$$
We start from the common eigenvector $w\in V$ of $P_i$, $i=1,2,3$:
$
P_1w=P_2w=P_3w=w$.
The existence of three isometric complementary to $P_1,P_2,P_3$ isometries
\[
T_1=J_1,\quad T_2=J_5, \quad\text{and}\quad T_3=J_7
\]
guarantees that the space $E:=E_{1+}\cap E_{2+}\cap E_{3+}\subset V$ is neutral by Lemma~\ref{lem:PT1}, part 1).
Since the isometric involution $P_4$ commutes with $P_1$, $P_2$ and $P_3$, their common
eigenspace $E$ is $P_4$-invariant. We write $E=E_{4+}\oplus_{\perp}E_{4-}$. If the restriction of $\langle\cdot\,,\cdot\rangle_V$ to $E_{+4}$ is not negative definite we does not need to do anything and we find $w\in E\cap E_{4+}$ such that
\begin{equation}\label{eq:70P4}
P_1w=P_2w=P_3w=P_4w=w\quad\text{and}\quad \langle w, w\rangle_V=1.
\end{equation}
If the restriction of $\langle\cdot\,,\cdot\rangle_V$ to $E_{+4}$ is negative definite, then the restriction of $\langle\cdot\,,\cdot\rangle_V$ to $E_{-4}$ is positive definite. We change the operator $P_4=J_1J_4J_5$ to the operator $\hat P_4=J_4J_1J_5$. Then for $w\in E_{4-}$ we have $\hat P_4w=-P_4w=w$. Thus we can find $w\in E\cap \hat E_{4+}$, where $\hat E_{4+}=E_{4-}$ satisfying~\eqref{eq:70P4}, where the operator $P$ is changed to $\hat P$.
To the linear relations~\eqref{eq:Cl07_1}, that still hold up to sign, we add new ones coming from $P_4w=w$ and choose the orthonormal basis 
\begin{equation}\label{eq:basis07}
\begin{array}{lllll}
&v_1=w,\quad &v_2=J_1w,\quad &v_3=J_2w,\quad &v_4=J_3w,
\\
&v_5=J_4w,\quad &v_6=J_5w,\quad &v_7=J_6w,\quad &v_8=J_7w,
\end{array}
\end{equation}
with $\langle v_{\alpha},v_{\alpha}\rangle_V=1$, $\alpha=1,\ldots,8$.
These relations also shows that operators $J_j$, $j=1,\ldots,7$ permutes this basis up to sign.

\begin{remark}\label{rem:product}
Note that we constructed  
from the given admissible module $V$, where we assumed that the scalar product is neutral
a minimal admissible sub-module of the Clifford algebra $\Cl_{7,0}$
whose dimension is $8$ and the restriction of the given neutral scalar
product to this subspace is {\it positive definite}. 
The sub-module is irreducible. The same constructions were done for the $\Cl_{5,0}$- and $\Cl_{6,0}$-modules.
\end{remark}

%
%

{\small
\begin{table}[h]
\begin{center}
\caption{Eigenspace decomposition: $\Cl_{7,0}$ case}
\begin{tabular}{|l|c|c|c|c|c|c|c|c|} \hline
Involutions&\multicolumn{8}{|c|}{Eigenvalues}\\ \hline
$P_1$&\multicolumn{4}{|c|}{$+1$} &\multicolumn{4}{|c|}{$-1$}\\\hline
$P_2$&\multicolumn{2}{|c|}{$+1$}&\multicolumn{2}{c}{$-1$}&\multicolumn{2}{|c|}{$+1$}&\multicolumn{2}{|c|}{$-1$}\\\hline
$P_3$&{$+1$}&{$-1$}&{$+1$}&$-1$&{$+1$}&{$-1$}&$+1$&$-1$\\\hline\hline
$P_4$&{$+1$}&{$-1$}&{$+1$}&$-1$&{$+1$}&{$-1$}&$+1$&$-1$\\\hline\hline
{Eigenvectors}&$w$ &$J_7w$&$J_3w$&$J_6w$&$J_1w$&$J_4w$&$J_2w$&$J_5w$\\\hline
\end{tabular}
\end{center}
\end{table}
}


\subsubsection{Integral structure on admissible $\Cl_{6,1}$-module}


The Clifford algebra $\Cl_{6,1}$ is isomorphic to the space $\mathbb
C(8)$.  We can apply Theorem~\ref{prop:r0tor1} or Theorem~\ref{prop:7} and Corollary~\ref{cor:3} to show the existence of an integral structure.


\subsubsection{Integral structure on admissible $\Cl_{5,2}$-module}


The Clifford algebra $\Cl_{5,2}$ is isomorphic to the space $\mathbb H(4)\oplus\mathbb H(4)$.
Let $z_1,\dots,z_7$ be orthonormal generators of $\Cl_{5,2}$, and
$(V,\langle\cdot\,,\cdot\rangle_{V})$ an admissible
$\Cl_{5,2}$-module. 
Then 
$
J_i^2:=J^2_{z_i}=-\Id_V$, $i=1,\ldots,5$, $J_j^2:=J^2_{z_j}=\Id_V$ for $j=6,7$.
The isometric involutions
$$
P_1=J_1J_2J_3J_4,\quad P_2=J_1J_2J_6J_7,\quad P_3=J_5J_6J_7.
$$
mutually commute. 
In this case we have two complementary isometric operators 
$T_1=J_1$, $T_2=J_2J_3$. 
Here we present the tables of their commutation relations
{\small
$$
\begin{array}{|l|c|c|c|c|c|c|c|}\hline
\text{\small{Involutions}}\backslash{\text{\small{Generators}}} &J_1&J_2&J_3&J_4&J_5&J_6&J_7\\\hline
P_1=J_1J_2J_3J_4&a&a&a&a&c&c&c\\\hline
P_2=J_1J_2J_6J_7&a&a&c&c&c&a&a\\\hline
P_3=J_5J_6J_7     &a&a&a&a&c&c&c\\\hline
\end{array}
$$
$$
\begin{array}{|l|c|c|}\hline
\text{Involutions}
\backslash{\text{Comp. op.}} &J_1(+\to +)&J_2J_3(+\to +)\\\hline
P_1=J_1J_2J_3J_4&a&c\\\hline
P_2=J_1J_2J_6J_7&&a\\\hline
P_3=J_5J_6J_7&&\\\hline
\end{array}
$$
}
Since there is no a complementary isometric operator with the property that
it commutes with $P_1$ and $P_2$ and anti-commutes with $P_3$ we argue as in the case of $\Cl_{7,0}$ and use the property that isometry $P_3$ commutes with $P_1$ and $P_2$ and if it is necessary we change $P_3$ to $\hat P_3$. We find a vector $w\in V$ such that $P_1w=P_2w=P_3w=w$ and $\langle w,w\rangle_V=1$. This relations will give the orthonormal basis and show that $J_j$ acts by permutation on this basis.
The eigenspace decomposition is given by Table 11.
{\small
\begin{table}[h]
\begin{center}
\caption{Eigenspace decomposition: $\Cl_{5,2}$ case}
\begin{tabular}{|l|c|c|c|c|c|c|c|c|} \hline
Involutions&\multicolumn{8}{|c|}{Eigenvalues}\\ \hline
$P_1$&\multicolumn{4}{|c|}{$+1$} &\multicolumn{4}{|c|}{$-1$}\\\hline
$P_2$&\multicolumn{2}{|c|}{$+1$}&\multicolumn{2}{c}{$-1$}&\multicolumn{2}{|c|}{$+1$}&\multicolumn{2}{|c|}{$-1$}\\\hline
$P_3$&{$+1$}&{$-1$}&{$+1$}&$-1$&{$+1$}&{$-1$}&$+1$&$-1$\\\hline\hline
{Eigenvector}&$w, J_5w$&&$J_6w, J_7w$&&&$J_{3}w, J_4w$&&$J_1w, J_{2}w$
\\
&$J_1J_3J_6w, J_1J_{3}J_7w$&&$J_1J_3w, J_1J_4w$&&&$J_{1}J_6w, J_1J_7w$&&$J_3J_6w, J_3J_7w$
\\
\hline
\end{tabular}
\end{center}
\end{table}
}
The basis is given in~\eqref{eq:basis52}.
\begin{equation}\label{eq:basis52}
\begin{array}{lllll}
&v_1=w,\quad &v_2=J_1w,\quad &v_3=J_2w,\quad &v_4=J_3w,
\\
&v_5=J_4w,\quad &v_6=J_5w,\quad &v_7=J_1J_3w,\quad &v_8=J_1J_4w,
\\
&v_9=J_5w,\quad &v_{10}=J_6w,\quad &v_{11}=J_1J_6w,\quad &v_{12}=J_1J_7w,
\\
&v_{13}=J_3J_6w,\quad &v_{14}=J_3J_7w,\quad &v_{15}=J_1J_3J_6w,\quad &v_{16}=J_1J_3J_7w
\end{array}
\end{equation}
with $\langle v_{\alpha},v_{\alpha}\rangle_V=1$, $\alpha=1,\ldots,8$ and $\langle v_{\alpha},v_{\alpha}\rangle_V=-1$ for $\alpha=9,\ldots,16$. Finally, since the vectors $w$ and $J_1J_3J_6w$  and $J_1J_3J_7w$
need not be orthogonal, we apply Lemma~\ref{orthogonal}
to change the vector $w$ to $\tilde{w}$. 
We constructed an integral structure in
the admissible sub-module, which is irreducible.


\subsubsection{Integral structure on admissible $\Cl_{4,3}$-module}


The Clifford algebra $\Cl_{4,3}$ is isomorphic to the space $\mathbb C(8)$.
Let $z_1,\dots,z_7$ be generators of $\Cl_{4,3}$, and
$(V,\langle\cdot\,,\cdot\rangle_{V})$ 
an admissible $\Cl_{4,3}$-module Then 
$
J_i^2:=J^2_{z_i}=-\Id_V$, $i=1,\ldots,4$, $J_j^2:=J^2_{z_j}=\Id_V$, for $j=5,6,7$.
We use the same mutually commuting isometries as in the case of $\Cl_{0,7}$-module
$$
P_1=J_1J_2J_3J_4,\quad P_2=J_1J_2J_5J_6,\quad P_3=J_2J_3J_6J_7.
$$
We start from the common eigenvector $w\in V$: $
P_1w=P_2w=P_3w=w$, $\langle w,w\rangle_{V}=1$.
The existence of such a common eigenvector $w$ is guaranteed by Lemma~\ref{lem:PT1}, part 1) and presence of three
complementary isometries 
$T_1=J_1$, $T_2=J_2J_3$ and $T_3=J_1J_2$ with the
commutation
relations
$$
\begin{array}{|l|c|c|c|}\hline
\text{Involutions}
\backslash{\text{Comp. op.}} &J_1(+\to +)&J_2J_3(+\to +)&J_1J_2(+\to +)\\\hline
P_1=J_1J_2J_3J_4&a&c&c\\\hline
P_2=J_1J_2J_5J_6&&a&c\\\hline
P_3=J_2J_3J_6J_7&&&a\\\hline
\end{array}
$$
Then we have simultaneous eigenspace decomposition of $P_i$ showed in Table 12.
{\small
\begin{table}[h]
\begin{center}
\caption{Eigenspace decomposition: $\Cl_{4,3}$ case}
\begin{tabular}{|l|c|c|c|c|c|c|c|c|} \hline
Involutions&\multicolumn{8}{|c|}{Eigenvalues}\\ \hline
$P_1$&\multicolumn{4}{|c|}{$+1$} &\multicolumn{4}{|c|}{$-1$}\\\hline
$P_2$&\multicolumn{2}{|c|}{$+1$}&\multicolumn{2}{c}{$-1$}&\multicolumn{2}{|c|}{$+1$}&\multicolumn{2}{|c|}{$-1$}\\\hline
$P_3$&\multicolumn{1}{|c|}{$+1$}&\multicolumn{1}{c}{$-1$}&\multicolumn{1}{|c|}{$+1$}&\multicolumn{1}{|c|}{$-1$}
&\multicolumn{1}{|c|}{$+1$}&\multicolumn{1}{c}{$-1$}&\multicolumn{1}{|c|}{$+1$}&\multicolumn{1}{|c|}{$-1$}\\\hline
{Eigenvector}
&$w$&$J_7w$&$J_5w$&$J_6w$&$J_4w$&$J_3w$&$J_1w$&$J_2w$
\\
&$J_1J_3J_6w$&$J_1J_2w$&$J_1J_4w$&$J_1J_3w$&$J_1J_5w$&$J_1J_6w$&$J_3J_6w$&$J_1J_7w$\\\hline
\end{tabular}
\end{center}
\end{table}
}
The eigenvectors listed in Table 12 form a basis. Since the elements $w$ and $J_1J_3J_6w$ are not necessarily
orthogonal, we apply Lemma~\ref{orthogonalization} 
to make them
orthogonal. 
The relations~\eqref{eq:Cl07_1} show 
that $J_i$, $i=1,\ldots,7$, permute basis vectors up to sign. 
The constructed sub-module of admissible $\Cl_{4,3}(w)$-module
 is admissible, integral, and irreducible.


\subsubsection{Integral structure on admissible $\Cl_{3,4}$-module}


The Clifford algebra $\Cl_{3,4}$ is isomorphic to the space $\mathbb R(8)\oplus \mathbb R(8)$.
Let $z_1,\dots,z_7$ be orthonormal generators of $\Cl_{3,4}$, 
and $(V,\langle\cdot\,,\cdot\rangle_{V})$ an admissible $\Cl_{3,4}$-module. Then 
$
J_i^2:=J^2_{z_i}=-\Id_V$, $i=1,\ldots,3$, $J_j^2:=J^2_{z_j}=\Id_V$ for $j=4,\ldots,7$.
We fix mutually commuting isometric involutions
$$
P_1=J_1J_2J_4J_5,\quad P_2=J_2J_3J_5J_6,\quad P_3=J_1J_2J_6J_7,\quad\text{and}\quad P_4=J_3J_4J_5.
$$
They have the following commutation relations with representations of generators.
{\small
$$
\begin{array}{|l|c|c|c|c|c|c|c|}\hline
\text{\small{Involutions}}\backslash{\text{\small{Generators}}} &J_1&J_2&J_3&J_4&J_5&J_6&J_7\\\hline
P_1=J_1J_2J_4J_5&a&a&c&a&a&c&c\\\hline
P_2=J_2J_3J_5J_6&c&a&a&c&a&a&c\\\hline
P_3=J_1J_2J_6J_7&a&a&c&c&c&a&a\\\hline
P_4=J_3J_4J_5     &a&a&c&c&c&a&a\\\hline
\end{array}
$$
}
Define the complementary operators $T_1=J_1$, $T_2=J_3$ and $T_3=J_7$.
{\small
$$
\begin{array}{|l|c|c|c|}\hline
\text{Involutions}
\backslash{\text{Comp. op.}} &J_1(+\to +)&J_3(+\to +)&J_7(+\to -)\\\hline
P_1=J_1J_2J_4J_5&a&c&c\\\hline
P_2=J_2J_3J_5J_6&&a&c\\\hline
P_3=J_1J_2J_6J_7&&&a\\\hline
P_4=J_{3}J_{4}J_{5}&&&\\\hline
\end{array}
$$
}
Since there are no complementary isometric operator with the property that
it commutes with $P_1$ and $P_2$ and anti-commutes with $P_3$,
we only know that the
simultaneous eigenspaces of $P_1$ and $P_2$ are neutral spaces by Lemma~\ref{lem:PT1}, part 1). The complementary anti-isometry $T_3=J_7$ and Lemma~\ref{lem:PT1}, part 2) guaranties that the space $E=E_{1+}\cap E_{2+}\cap E_{3+}$ either neutral or sign definite. We consider both possibilities.

Let the restriction of $\langle\cdot\,,\cdot\rangle_V$ on $E$ be neutral, then we argue as in the case of $\Cl_{0,7}$ algebra and find 
\begin{equation}\label{eq:70P41}
P_1w=P_2w=P_3w=P_4w=w\quad\text{and}\quad \langle w, w\rangle_V=1.
\end{equation}
In this case we can directly proceed further and construct an othonormal basis. 

Let the restriction of $\langle\cdot\,,\cdot\rangle_V$ on $E$ be negative definite, then we change $P_3$ to $\hat P_3=J_2J_1J_6J_7$, that will make the space $\hat E=E_{1+}\cap E_{2+}\cap \hat E_{3+}$ positive definite space.

If the restriction of $\langle\cdot\,,\cdot\rangle_V$ on $E$ is positive definite, we do nothing.

Thus, from now on we can assume that the restriction of $\langle\cdot\,,\cdot\rangle_V$ on $E=E_{1+}\cap E_{2+}\cap E_{3+}$ is positive definite.
Hence it can happen only two cases for $w\in E$:
\begin{itemize}
\item[(1)] $\langle w+P_4w,w+P_4w\rangle_V=0$, that is
$P_4w=-w$. We change the operator $P_4=J_3J_4J_5$ to the operator $\hat P_4=J_4J_3J_5$, then 
the vector $w$ is a common eigenvector of all four involutions $P_i $ with the eigenvalue $1$, i.~e. $w$ satisfies~\eqref{eq:70P41}. 
\item[(2)] $\langle w+P_4w,w+P_4w\rangle_V>0$. In this case we get the eigenvector $\hat w=w+P_4w$ of $P_4$ with the eigenvalue
$1$. Normalising the vector $\hat w$ 
we obtain that $\hat w$ satisfies~\eqref{eq:70P41}.
\end{itemize}

So in the both cases the admissible sub-module of the Clifford algebra
$\Cl_{3,4}$ is 8 dimensional and
decomposed into $8$ common eigenspaces as shown in Table 13. The orthogonal basis is given in~\eqref{eq:basis07} with $\langle v_{\alpha},v_{\alpha}\rangle_V=1$, $\alpha=1,\ldots,4$ and $\langle v_{\alpha},v_{\alpha}\rangle_V=-1$ for $\alpha=5,\ldots,8$.
The relations~\eqref{eq:70P41} show also that operators $J_j$, $j=1,\ldots,7$, permute the basis up to sign. 
{\small
\begin{table}[h]
\begin{center}
\caption{Eigenspace decomposition: $\Cl_{3,4}$ case}
\vspace{-3.5mm}
\begin{tabular}{|l|c|c|c|c|c|c|c|c|} \hline
Involutions&\multicolumn{8}{|c|}{Eigenvalues}\\ \hline
$P_1$&\multicolumn{4}{|c|}{$+1$} &\multicolumn{4}{|c|}{$-1$}\\\hline
$P_2$&\multicolumn{2}{|c|}{$+1$}&\multicolumn{2}{c}{$-1$}&\multicolumn{2}{|c|}{$+1$}&\multicolumn{2}{|c|}{$-1$}\\\hline
$P_3$&\multicolumn{1}{|c|}{$+1$}&\multicolumn{1}{c}{$-1$}&\multicolumn{1}{|c|}{$+1$}&\multicolumn{1}{|c|}{$-1$}
&\multicolumn{1}{|c|}{$+1$}&\multicolumn{1}{c}{$-1$}&\multicolumn{1}{|c|}{$+1$}&\multicolumn{1}{|c|}{$-1$}
\\\hline
$P_4$&\multicolumn{1}{|c|}{$+1$}&\multicolumn{1}{c}{$-1$}&\multicolumn{1}{|c|}{$+1$}&\multicolumn{1}{|c|}{$-1$}
&\multicolumn{1}{|c|}{$+1$}&\multicolumn{1}{c}{$-1$}&\multicolumn{1}{|c|}{$+1$}&\multicolumn{1}{|c|}{$-1$}\\\hline\hline
{Eigenvectors}&$w$&$J_7w$ &$J_3w$ &$J_6w$ &$J_4w$&$J_1w$&$J_5w$&$J_2w$\\\hline
\end{tabular}
\end{center}
\end{table}
}
The constructed module is admissible integral, and irreducible.


\subsubsection{Integral structure on admissible $\Cl_{2,5}$-module}


The integral structure on the admissible $\Cl_{2,5}$-module exists according to Theorem~\ref{prop:7} and Corollar~\ref{cor:3}, since $\Cl_{2,5}\cong\Cl_{4,3}$.


\subsubsection{Integral structure on admissible $\Cl_{1,6}$-module}


The integral structure on the admissible $\Cl_{1,6}$-module exists according to Theorem~\ref{prop:7} and Corollar~\ref{cor:3}, since $\Cl_{1,6}\cong\Cl_{5,2}$.


\subsubsection{Integral structure on admissible $\Cl_{0,7}$-module}


The admissible integral $\Cl_{0,7}$-module was constructed in Section~\eqref{sec:0n}.



\subsection{Integral structure on admissible $\Cl_{r,s}$-modules with $r+s=8$}


\subsubsection{Integral structure on admissible $\Cl_{8,0}$-module}


The Clifford algebra $\Cl_{8,0}$ is isomorphic to the space $\mathbb R(16)$.
Let $z_1,\dots,z_8$ be generators of $\Cl_{8,0}$, and $(V,(\cdot\,,\cdot)_{V})$ an admissible $\Cl_{8,0}$-module with the positive definite product $(\cdot\,,\cdot)_{V}$. Note that here we use the known fact that $\Cl_{r,0}$-modules are admissible with a positive definite product. Then 
$
J_i^2:=J^2_{z_i}=-\Id_V$, $i=1,\ldots,8$.
In this case we argue as in the case of $\Cl_{0,8}$-modules and consider the mutually commuting isometric involution
$$
P_1=J_1J_2J_3J_4,\quad P_2=J_1J_2J_5J_6,\quad P_3=J_2J_3J_5J_7,\quad P_4=J_1J_2J_7J_8.
$$
We start from the common eigenvector $w\in V$: $
P_1w=P_2w=P_3w=P_4w=w$ with $\langle w,w\rangle_{V}=1$.
The relations~\eqref{eq:Cl08_all}
are still true and they left
16 elements of orthonormal basis:
$$
\begin{array}{llllll}
&v_1=w,\quad &v_2=J_1J_2w,\quad &v_3=J_1J_3w,\quad &v_4=J_1J_4w,
\\
&v_5=J_1J_5w,\quad &v_6=J_1J_6w,\quad &v_7=J_1J_7w,\quad &v_8=J_1J_8w,
\\
&v_9=J_1w,\quad &v_{10}=J_2w,\quad &v_{11}=J_3w,\quad &v_{12}=J_4w,
\\
&v_{13}=J_5w,\quad &v_{14}=J_6w,\quad &v_{15}=J_7w,\quad &v_{16}=J_8w,
\end{array}
$$
with $(v_{\alpha},v_{\alpha})_{V}=1$, $\alpha=1,\ldots,16$. We see from~\eqref{eq:Cl08_all} that $J_i$, $i=1,\ldots,8$, permute basis vectors up to sign. The constructed 16 dimensional sub-module is admissible, integral, and irreducible.


\subsubsection{Integral structure on admissible $\Cl_{7,1}$-module}


The Clifford algebra $\Cl_{7,1}$ is isomorphic to the space $\mathbb
R(16)$.
We can apply Theorem~\ref{prop:r0tor1} or Theorem~\ref{prop:7} and Corollary~\ref{cor:3} to show the existence of an integral structure. The admissible module is irreducible


\subsubsection{Integral structure on $\Cl_{6,2}$-admissible module}


The Clifford algebra $\Cl_{6,2}$ is isomorphic to the space $\mathbb H(8)$.
Let $z_1,\dots,z_8$ be orthonormal generators of $\Cl_{6,2}$ and 
$(V,\langle\cdot\,,\cdot\rangle_{V})$ be an admissible $\Cl_{6,2}$-module. Then 
$
J_i^2:=J^2_{z_i}=-\Id_V$, $i=1,\ldots,6$, $J_j^2:=J_{z_j}^2=\Id_V$ for $j=7,8$.
We consider the three mutually commuting isometric
involutions
\[
P_1=J_1J_2J_3J_4,\quad P_2=J_1J_2J_5J_6, \quad\text{and}\quad P_3=J_1J_2J_7J_8,
\]
and four complementary operators: $
T_1=J_1,\quad T_2= J_5,\quad T_3=J_7,\quad\text{and}\quad T_4=J_1J_3J_5J_7$.
Then we have commutation relations between
involutions $P_i$ and complementary operators $T_j$.
{\small
$$
\begin{array}{|l|c|c|c|c|}\hline
\text{Involutions}
\backslash{\text{Comp. op.}} &J_1(+\to +)&J_5(+\to +)&J_7(+\to -)&J_1J_3J_5J_7(+\to -)\\\hline
P_1=J_1J_2J_3J_4&a&c&c&c\\\hline
P_2=J_1J_2J_5J_6&&a&c&c\\\hline
P_3=J_1J_2J_7J_8&&&a&c\\\hline
\end{array}
$$}
From these relations
the common eigenspace $E_{1+}\cap E_{2+}$ of the first two involutions
is neutral space by Lemma~\ref{lem:PT1}, part 1). Then we use Lemma~\ref{lem:PT2} and conclude that 
the common eigenspace $E_{1+}\cap E_{2+}\cap E_{3+}$ of all three involutions $P_i$
is a neutral space. So,
we may find an element $w$ such that
$
P_1w=P_2w=P_3w=w$, and $\langle w,w\rangle_V=1$.
The eigenspace decomposition presented in Table 14.
By a direct calculations, we know that
each $4$ vectors belong to a common eigenspace, especially
the vectors
$w$, $J_{1}J_{2}w$, $J_{1}J_{3}J_{5}J_{7}w$, and $J_{1}J_{3}J_{5}J_{8}w$
are in the comon eigenspace with eigenvalue of $1$.
First two are spacelike and orthonormal
and also last two are timelike and orthonormal. 
Unfortunately, first two and last two need not be orthogonal, so we
need to apply Lemma~\ref{orthogonal}.
Since the operators $J_{1}J_{3}J_{5}J_{7}w$ and $J_{1}J_{3}J_{5}J_8w$
are anti-involutions and anti-commute each other, they satisfy
the conditions of Lemma~\ref{orthogonal}. Hence we can
obtain a new
common eigenvector $\tilde{w}$:
$P_1\tilde{w}=P_2\tilde{w}=P_3\tilde{w}=\tilde{w}$, such that
$\langle \tilde{w},J_{1}J_{3}J_{5}J_{7}\tilde{w}\rangle_V=\langle\tilde{w},J_{1}J_{3}J_{5}J_8\tilde{w}\rangle_V=0$. 
The linear dependence relations arising from $P_1\tilde w=P_2 \tilde w=P_3\tilde w=\tilde w$ shows that all other vectors presented in Table 14 become orthogonal and moreover $J_j$, $j=1,\ldots,8$, permute them up to sign. The vectors listed in Table 14 form an integral basis of 32 dimensional admissible sub-module of $\Cl_{6,2}$-module.
Moreover this sub-module is irreducible. 
{\small
\begin{table}[h]
\begin{center}
\caption{Eigenspace decomposition: $\Cl_{6,2}$ case}
\begin{tabular}{|l|c|c|c|c|c|c|c|c|} \hline
Involutions&\multicolumn{8}{|c|}{Eigenvalues}\\ \hline
$P_1$&\multicolumn{4}{|c|}{$+1$} &\multicolumn{4}{|c|}{$-1$}\\\hline
$P_2$&\multicolumn{2}{|c|}{$+1$}&\multicolumn{2}{c}{$-1$}&\multicolumn{2}{|c|}{$+1$}&\multicolumn{2}{|c|}{$-1$}\\\hline
$P_3$&{$+1$}&{$-1$}&{$+1$}&$-1$&{$+1$}&{$-1$}&$+1$&$-1$\\\hline\hline
{Eigenvectors}
&$w$&$J_{7}w$&$J_5w$&$J_{1}J_{3}w$&$J_{3}w$&$J_1J_{5}w$&$J_{1}J_7w$&$J_{1}w$
\\
&$J_1J_2w$ &$J_8w$&$J_6w$&$J_1J_4w$&$J_4w$&$J_1J_6w$&$J_1J_8w$&$J_2w$
\\
&$J_1J_3J_5J_7w$&$J_2J_{3}J_5w$&$J_2J_{3}J_7w$&$J_{5}J_{7}w$&$J_{1}J_{5}J_7w$&$J_{3}J_7w$&$J_{3}J_5w$&$J_{3}J_5J_7w$
\\
&$J_1J_3J_5J_8w$&$J_{2}J_{3}J_6w$&$J_{2}J_{3}J_8w$&$J_5J_8w$&$J_1J_{5}J_8w$&$J_3J_8w$&$J_3J_6w$&$J_3J_5J_8w$\\\hline
\end{tabular}
\end{center}
\end{table}
}


\subsubsection{Integral structure on admissible $\Cl_{5,3}$-module}


The Clifford algebra $\Cl_{5,3}$ is isomorphic to the space 
$\mathbb{H}(8)$. 
Let $z_1,\dots,z_8$ be orthonormal generators of $\Cl_{5,3}$ and 
$(V,\langle\cdot\,,\cdot\rangle_{V})$ be an admissible $\Cl_{5,3}$-module. Then 
$
J_i^2:=J^2_{z_i}=-\Id_V$, $i=1,\ldots,5$, $J_j^2:=J_{z_j}^2=\Id_V$ for $j=6,7,8$.
We consider the three mutually commuting isometric
involutions
\[
P_1=J_1J_2J_3J_4,\quad P_2=J_1J_2J_6J_7, \quad\text{and}\quad P_3=J_2J_3J_7J_8.
\]
In this case, we choose four complementary operators:
$$
T_1=J_1,\quad T_2= J_1J_3,\quad T_3=J_8,\quad\text{and}\quad T_4=J_1J_3J_5J_7.
$$
Then we have commutation relations between
involutions $P_i$ and complementary operators $T_j$.
{\small
$$
\begin{array}{|l|c|c|c|c|}\hline
\text{Involutions}
\backslash{\text{Comp. op.}} &J_1(+\to +)&J_1J_3(+\to +)&J_8(+\to -)&J_1J_3J_5J_7(+\to -)\\\hline
P_1=J_1J_2J_3J_4&a&c&c&c\\\hline
P_2=J_1J_2J_6J_7&&a&c&c\\\hline
P_3=J_2J_3J_7J_8&&&a&c\\\hline
\end{array}
$$}
From these relations
the common eigenspace $E_{1+}\cap E_{2+}$ of the first two involutions
is neutral space by Lemma~\ref{lem:PT1}, part 1). Then we use Lemma~\ref{lem:PT2} and conclude that 
the common eigenspace $E_{1+}\cap E_{2+}\cap E_{3+}$ of all three involutions $P_i$
is a neutral space. So,
we may find an element $w$ such that $
P_1w=P_2w=P_3w=w$ and $\langle w,w\rangle_V=1$.
The eigenspace decomposition presented in Table 15.
{\tiny
\begin{table}[h]
\begin{center}
\caption{Eigenspace decomposition: $\Cl_{5,3}$ case}
\begin{tabular}{|l|c|c|c|c|c|c|c|c|} \hline
Involutions&\multicolumn{8}{|c|}{Eigenvalues}\\ \hline
$P_1$&\multicolumn{4}{|c|}{$+1$} &\multicolumn{4}{|c|}{$-1$}\\\hline
$P_2$&\multicolumn{2}{|c|}{$+1$}&\multicolumn{2}{c}{$-1$}&\multicolumn{2}{|c|}{$+1$}&\multicolumn{2}{|c|}{$-1$}\\\hline
$P_3$&{$+1$}&{$-1$}&{$+1$}&$-1$&{$+1$}&{$-1$}&$+1$&$-1$\\\hline\hline
{Eigenvectors}
&$w$&$J_8w$&$J_6w$&$J_7w$&$J_4w$&$J_3w$&$J_1w$&$J_2w$
\\
&$J_5w$ &$J_1J_2w$&$J_1J_4w$&$J_1J_3w$&$J_1J_6w$&$J_1J_7w$&$J_1J_5w$&$J_1J_8w$
\\
&$J_1J_2J_8w$&$J_5J_8w$&$J_5J_6w$&$J_{5}J_{7}w$&$J_4J_{5}w$&$J_{3}J_5w$&$J_{3}J_7w$&$J_2J_5w$
\\
&$J_1J_2J_5J_8w$&$J_1J_2J_5w$&$J_1J_4J_5w$&$J_1J_3J_5w$&$J_1J_5J_6w$&$J_1J_5J_7w$&$J_3J_5J_7w$&$J_1J_5J_8w$
\\\hline
\end{tabular}
\end{center}
\end{table}
}
We need to apply Lemma~\ref{orthogonal} to operators $\Omega_1=J_1J_2J_8$ and $\Omega_2=J_1J_2J_5J_8$ to make the vectors in $E_{1+}\cap E_{2+}\cap E_{3+}$ orthogonal. It also makes all other vectors orthogonal by relations
$P_1w=P_2w=P_3w=w$. The same relations show that $J_j$, $j=1,\ldots,8$ permute the basis up to sign. It proves that basis listed in Table 15 is integral.
Finally we notice that the constructed sub-module is irreducible,
  since its dimension is $32$.


\subsubsection{Integral structure on admissible $\Cl_{4,4}$-module}

The Clifford algebra $\Cl_{4,4}$ is isomorphic to the space $\mathbb R(16)$.
Let $z_1,\dots,z_8$ be generators of $\Cl_{4,4}$, 
and $(V,\langle\cdot\,,\cdot\rangle_{V})$ an admissible $\Cl_{4,4}$-module. Then 
$
J_i^2:=J^2_{z_i}=-\Id_V$, $i=1,\ldots,4$, $J_j^2:=J^2_{z_j}=\Id_V$ for $j=5,\ldots,8$.
Choose mutually commuting isometric involutions
$$
P_1=J_1J_2J_3J_4,\quad P_2=J_1J_2J_5J_6,\quad P_3=J_2J_3J_5J_7,\quad\text{and}\quad P_4=J_1J_2J_7J_8,
$$
and four complementary operators $T_1=J_1$, $T_2=J_1J_3$, $T_3=J_1J_2$, and $T_4=J_8$.
Here are the tables of the commutation relations with generators and 
complementary operators:
{\small
\begin{center}
{Commutation relations: $\Cl_{4,4}$ case}
\end{center}
\begin{equation*}\label{eq:Cl44Gene}
\begin{array}{|l|c|c|c|c|c|c|c|c|}\hline
\text{Involution}\backslash \text{Generator}&J_{1}&J_{2}&J_{3}&J_{4}&J_{5}&J_{6}&J_{7}&J_8,\\\hline
P_1=J_{1}J_{2}J_{3}J_{4}&a&a&a&a&c&c&c&c\\\hline
P_2=J_{1}J_{2}J_{5}J_{6}&a&a&c&c&a&a&c&c\\\hline
P_3=J_{2}J_{3}J_{5}J_{7}&c&a&a&c&a&c&a&c\\\hline
P_4=J_{1}J_{2}J_{7}J_{8}&a&a&c&c&c&c&a&a\\\hline
\end{array}
\end{equation*}
}
\begin{center}
{Complementary operator}: $\Cl_{4,4}$ case
\end{center}
{\small
\begin{equation*}\label{eq:Cl44comp}
\begin{array}{|l|c|c|c|c|}\hline
\text{Involution}\backslash \text{Comp. op.}&J_{1}(+\to +)&J_{1}J_{3}(+\to +)&J_{1}J_{2}(+\to
+)&J_8(+\to -)\\\hline
P_1=J_{1}J_{2}J_{3}J_{4}&a&c&c&c\\\hline
P_2=J_{1}J_{2}J_{5}J_{6}&&a&c&c\\\hline
P_3=J_2J_{3}J_5J_7       &&&a&c\\\hline
P_4=J_1J_2J_7J_8         &&&&a\\\hline
\end{array}
\end{equation*}
}
From these relations we can choose a vector $w\in V$ such that
$$
P_1w=P_2w=P_3w=P_4w=w,\quad\text{and}\quad\langle w ,w\rangle_{V}=1,
$$
by Lemma~\ref{lem:PT1}, part 1) and Lemma~\ref{lemma:04}.
Hence by the relations~\eqref{eq:Cl08_all}
we have a simultaneous eigenspace decomposition of a subspace in $V$
spanned by the $16$ common eigenvectors that form the basis 
$$
\begin{array}{llllll}
&v_1=w,\quad &v_2=J_1w,\quad &v_3=J_2w,\quad &v_4=J_3w,
\\
&v_5=J_4w,\quad &v_6=J_1J_2w,\quad &v_7=J_1J_3w,\quad &v_8=J_1J_4w,
\\
&v_9=J_5w,\quad &v_{10}=J_6w,\quad &v_{11}=J_7w,\quad &v_{12}=J_8w,
\\
&v_{13}=J_1J_5w,\quad &v_{14}=J_1J_6w,\quad &v_{15}=J_1J_7w,\quad &v_{16}=J_1J_8w,
\end{array}
$$
with $\langle v_{\alpha},v_{\alpha}\rangle_{V}=1$, $\alpha=1,\ldots,8$ and $\langle v_{\alpha},v_{\alpha}\rangle_{V}=-1$ for $\alpha=9,\ldots,16$.
{\small
\begin{table}[h]
\begin{center}
\caption{Eigenspace decomposition: $\Cl_{4,4}$ case}
{\small
\begin{tabular}{|l|c|c|c|c|c|c|c|c|} \hline
Involutions&\multicolumn{8}{|c|}{Eigenvalues}\\ \hline
$P_1$&\multicolumn{8}{|c|}{$+1$}\\\hline
$P_2$&\multicolumn{4}{|c|}{$+1$} &\multicolumn{4}{|c|}{$-1$}\\\hline
$P_3$&\multicolumn{2}{|c|}{$+1$}
&\multicolumn{2}{|c|}{$-1$}&\multicolumn{2}{|c|}{$+1$}
&\multicolumn{2}{|c|}{$-1$}\\
\hline
$P_4$&\multicolumn{1}{|c|}{$+1$}&\multicolumn{1}{|c|}{$-1$}&\multicolumn{1}{|c|}{$+1$}&\multicolumn{1}{|c|}{$-1$}
&\multicolumn{1}{|c|}{$+1$}&\multicolumn{1}{|c|}{$-1$}&\multicolumn{1}{|c|}{$+1$}&\multicolumn{1}{|c|}{$-1$}
\\\hline\hline
{Eigenvectors}
&$w$&$J_8w$&$J_1J_2w$& $J_7w$&$J_6w$&$J_1J_4w$&$J_5w$&$J_1J_3w$\\\hline
\end{tabular}}
\end{center}
\end{table}
}
{\small
\begin{table}[h]
\begin{center}
\begin{tabular}{|l|c|c|c|c|c|c|c|c|} \hline
Involutions&\multicolumn{8}{|c|}{Eigenvalues}\\ \hline
$P_1$&\multicolumn{8}{|c|}{$-1$}\\\hline
$P_2$&\multicolumn{4}{|c|}{$+1$} &\multicolumn{4}{|c|}{$-1$}\\\hline
$P_3$&\multicolumn{2}{|c|}{$+1$}
&\multicolumn{2}{|c|}{$-1$}&\multicolumn{2}{|c|}{$+1$}
&\multicolumn{2}{|c|}{$-1$}\\
\hline
$P_4$&\multicolumn{1}{|c|}{$+1$}&\multicolumn{1}{|c|}{$-1$}&\multicolumn{1}{|c|}{$+1$}&\multicolumn{1}{|c|}{$-1$}
&\multicolumn{1}{|c|}{$+1$}&\multicolumn{1}{|c|}{$-1$}&\multicolumn{1}{|c|}{$+1$}&\multicolumn{1}{|c|}{$-1$}
\\\hline\hline
{Eigenvectors}
&$J_4w$&$J_1J_6w$&$J_3w$&$J_1J_5w$&$J_1J_8w$&$J_1w$&$J_1J_7w$&$J_2w$\\\hline
\end{tabular}
\end{center}
\end{table}
}
The relations~\eqref{eq:Cl08_all} also shows that $J_j$, $j=1,\ldots,8$ permutes the basis up to sign. So the minimal admissible module of $\Cl_{4,4}$ is $16$ dimensional,
irreducible and integral.


\subsubsection{Integral structure on admissible $\Cl_{3,5}$-module}


The integral structure on the admissible $\Cl_{3,5}$-module exists according to Theorem~\ref{prop:7} and Corollar~\ref{cor:3}, since $\Cl_{3,5}\cong\Cl_{4,4}$.
 

\subsubsection{Integral structure on admissible $\Cl_{2,6}$-module}


The integral structure on the admissible $\Cl_{2,6}$-module exists according to Theorem~\ref{prop:7} and Corollar~\ref{cor:3}, since $\Cl_{2,6}\cong\Cl_{5,3}$.
 

\subsubsection{Integral structure on admissible $\Cl_{1,7}$-module}


The integral structure on the admissible $\Cl_{1,7}$-module exists according to Theorem~\ref{prop:7} and Corollar~\ref{cor:3}, since $\Cl_{1,7}\cong\Cl_{6,2}$.


\subsubsection{Integral structure on admissible $\Cl_{0,8}$-module}


Admissible integral $\Cl_{0,8}$-module was constructed in Section~\ref{sec:0n}.



\section{Admissible modules obtained by tensor product}\label{sec:tensor}



\subsection{Bott periodicity and admissible modules of dimensions $r+s>8$}


In this section we present some theorems that allow to use the Bott periodicity
$$
\Cl_{r+8,s}\cong \Cl_{r+4,s+4}\cong\Cl_{r,s}\otimes\mathbb R(16),\quad
\Cl_{r,s+8}\cong \Cl_{r+4,s+4}\cong\Cl_{r,s}\otimes\mathbb R(16)
$$
of Clifford algebras in order to prove that $\Cl_{r,s}$-modules are integer for $r+s>8$. 

\begin{theorem}\label{th:rs8}
Let us assume that $(V,\langle\cdot\,,\cdot\rangle_V)$ is an
 admissible integral $\Cl_{r,s}$-module 
and $(U,\langle\cdot\,,\cdot\rangle_U)$ is an
 admissible integral $\Cl_{0,8}$-module, where the representations $J_{y_j}\in\End(U)$ permute the integral basis of $U$ up to sign for all orthonormal generators $y_j$ of the Clifford algebra $\Cl_{0,8}$. Then the tensor 
product vector space 
$\big(V\otimes U,\langle\cdot\,,\cdot\rangle_V
\langle\cdot\,,\cdot\rangle_U\big)$ is an admissible integral $\Cl_{r,s+8}$-module.
\end{theorem}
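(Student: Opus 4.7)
The plan is to exploit the Bott-periodicity isomorphism $\Cl_{r,s+8}\cong \Cl_{r,s}\otimes\Cl_{0,8}$ at the level of representations, using the volume element of $\Cl_{0,8}$ as the bridge. Set
\[
\Gamma:=J_{y_{1}}J_{y_{2}}\cdots J_{y_{8}}\in\End(U),
\]
where $y_{1},\dots,y_{8}$ are the orthonormal generators of $\Cl_{0,8}$. My first step is to record three properties of $\Gamma$: $(\mathrm{i})$ $\Gamma^{2}=\Id_{U}$, obtained by a straightforward sign count using $J_{y_{j}}^{2}=-\Id_{U}$ and pairwise anti-commutation; $(\mathrm{ii})$ $\Gamma$ anti-commutes with every $J_{y_{j}}$, because the number of generators is even; $(\mathrm{iii})$ $\Gamma$ is symmetric with respect to $\langle\cdot\,,\cdot\rangle_{U}$, since each $J_{y_{j}}$ is skew-symmetric on $U$ and reversing the order of the eight factors carries the sign $(-1)^{8\cdot 7/2}=+1$. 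Thus $\Gamma$ is an isometric involution commuting in a controlled way with the Clifford action on $U$.

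Next, define the candidate representations on $V\otimes U$ by
\[
\widetilde J_{z_{i}}:=J_{z_{i}}\otimes \Gamma\quad(i=1,\dots,r+s),\qquad
\widetilde J_{y_{j}}:=\Id_{V}\otimes J_{y_{j}}\quad(j=1,\dots,8),
\]
and equip $V\otimes U$ with the tensor product scalar product $\langle v\otimes u,v'\otimes u'\rangle:=\langle v,v'\rangle_{V}\langle u,u'\rangle_{U}$. I then verify that these operators satisfy the Clifford relations of $\Cl_{r,s+8}$: the squares come out to $-\langle z_{i},z_{i}\rangle\,\Id$ and $-\Id$ thanks to $\Gamma^{2}=\Id_{U}$; the anti-commutators among the $\widetilde J_{z_{i}}$ and among the $\widetilde J_{y_{j}}$ reduce to those on each factor; and the mixed anti-commutator $\widetilde J_{z_{i}}\widetilde J_{y_{j}}+\widetilde J_{y_{j}}\widetilde J_{z_{i}}=0$ is precisely the anti-commutation of $\Gamma$ with $J_{y_{j}}$ tensored with $J_{z_{i}}$. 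For admissibility I check skew-symmetry of each new generator: for $\widetilde J_{z_{i}}$ this combines the skew-symmetry of $J_{z_{i}}$ on $V$ with the symmetry of $\Gamma$ on $U$ (two minus signs that combine to one), while for $\widetilde J_{y_{j}}$ it is immediate.

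Finally, integrality. Let $\{v_{\alpha}\}$ and $\{u_{\beta}\}$ be the integral bases guaranteed by the hypotheses. The product family $\{v_{\alpha}\otimes u_{\beta}\}$ is clearly an orthonormal basis of $V\otimes U$, with $\langle v_{\alpha}\otimes u_{\beta},v_{\alpha}\otimes u_{\beta}\rangle=\pm 1$. For integrality in the sense of Corollary~\ref{cor:2-1}, I must show that each $\widetilde J_{z_{i}}$ and each $\widetilde J_{y_{j}}$ permutes this basis up to sign. For $\widetilde J_{y_{j}}$ this is immediate from the assumption on $U$. For $\widetilde J_{z_{i}}$ one has $\widetilde J_{z_{i}}(v_{\alpha}\otimes u_{\beta})=(J_{z_{i}}v_{\alpha})\otimes (\Gamma u_{\beta})$; the first factor is $\pm v_{\alpha'}$ by integrality of $V$, and the second is $\pm u_{\beta'}$ because $\Gamma$ is the composition of the eight sign-permutation operators $J_{y_{j}}$. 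Invoking Lemma~\ref{prop:6} and Corollary~\ref{cor:2-1} then concludes that $V\otimes U$ is an admissible integral $\Cl_{r,s+8}$-module.

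The only genuinely delicate point is the package of sign computations establishing that $\Gamma$ is an isometric involution that anti-commutes with every $J_{y_{j}}$; everything else is a direct verification, and the stipulation that $J_{y_{j}}$ permutes the basis of $U$ up to sign (rather than merely being integral) is exactly what is needed to push the integrality through the factor $\Gamma$ that dresses the $\widetilde J_{z_{i}}$.
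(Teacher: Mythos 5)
Your overall approach coincides exactly with the paper's: define the volume element of $\Cl_{0,8}$ (which you call $\Gamma$ and the paper calls $E$), show it is an isometric symmetric involution anti-commuting with each $J_{y_j}$, dress the old generators as $J_{z_i}\otimes\Gamma$ and graft the new ones as $\Id_V\otimes J_{y_j}$, and then check Clifford relations, skew-symmetry of the product scalar product, and sign-permutation of the tensor basis. The construction and its conclusion are correct.

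However, two of your sign claims are wrong as written and should be fixed. First, for an admissible $\Cl_{0,8}$-module the orthonormal generators $y_j$ satisfy $\langle y_j,y_j\rangle_{\mathbb R^{0,8}}=-1$, so $J_{y_j}^2=-\langle y_j,y_j\rangle\,\Id_U=+\Id_U$, not $-\Id_U$ as you assert; the computation $\Gamma^2=\Id_U$ of course still holds, but via $(-1)^{28}\cdot 1^8=1$, not the signs you describe. Second, and consequently, $\widetilde J_{y_j}^2=\Id_V\otimes J_{y_j}^2=+\Id_{V\otimes U}$, not $-\Id$ as you state; this is exactly what is required, since the $8$ new generators $\tilde\zeta_{s+j}$ of $\Cl_{r,s+8}$ are negative ($\langle\tilde\zeta_{s+j},\tilde\zeta_{s+j}\rangle=-1$) and must square to $+\Id$. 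If the squares really were $-\Id$ as you claim, the operators would not represent $\Cl_{r,s+8}$ at all. The skew-symmetry check also reads oddly (``two minus signs that combine to one''); what actually happens is that the single minus sign from the skew-symmetry of $J_{z_i}$ passes untouched through the symmetric factor $\Gamma$. None of these slips breaks the construction, since the operators you define are the right ones, but they should be corrected in the write-up.
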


\begin{proof}
Let $(z_1,\ldots,z_r,\zeta_1,\ldots,\zeta_s)$ be orthonormal 
generators of the Clifford algebra $\Cl_{r,s}$ with the quadratic 
form $\mathbb Q_{r,s}(a)=\sum_{i=1}^{r}a_i^2-\sum_{k=1}^{s}\alpha_k^2$ 
for $a=\sum_{i=1}^{r} a_iz_i+\sum_{k=1}^{s} \alpha_k\zeta_k$. 
Let $\{y_1,\ldots,y_8\}$ be orthonormal generators 
for $\Cl_{0,8}$ with quadratic form 
$\mathbb Q_{0,8}(b)=-\sum_{j=1}^{8}b_j^2$ for $b=\sum_{j=1}^{8}b_jy_j$ 
and, finally let $\tilde z_1,\ldots,\tilde
z_{r},\tilde\zeta_1,\ldots,\tilde\zeta_{s+8}$ be orthonormal
generators for the Clifford algebra $\Cl_{r,s+8}$ with quadratic 
form $\mathbb
Q_{r,s+8}(c)=\sum_{i=1}^{r}c_i^2-\sum_{k=1}^{s+8}\varsigma_k^2$ 
for $c=\sum_{i=1}^{r} c_i\tilde z_i+\sum_{k=1}^{s+8} \varsigma_s\tilde\zeta_s$. 

We know that the minimal admissible
$\Cl_{0,8}$-module $(U,\langle\cdot\,,\cdot\rangle_U)$
is isomorphic to $\mathbb R^{8,8}$ with quadratic form 
$\mathbb Q_{8,8}(u)=\sum_{i=1}^{8}u_i^2-\sum_{j=9}^{16}u_j^2$ 
for $u=\sum_{i=1}^{16} u_ie_i$, where $e_i$, $i=1,\ldots,16$ 
is the standard basis in $\mathbb R^{8,8}$. This module is also irreducible.
Then the endomorphisms $J_{y_j}\in\End(\mathbb R^{8,8})$ 
are such that $J_{y_j}^2=\Id_{\mathbb R^{8,8}}$, $j=1,\ldots,8$, $J_{y_i}J_{y_j}=-J_{y_j}J_{y_i}$ for $i\neq j$. 

Now one needs to find $E\in \End(\mathbb R^{8,8})$ satisfying conditions
\begin{equation}\label{eq:E8}
 EJ_{y_j}=-J_{y_j}E, \quad j=1,\ldots,8,\qquad E^2=\Id_{\mathbb R^{8,8}},
\end{equation} 
\begin{equation}\label{eq:eqsym8}
\langle Eu,u'\rangle_{\mathbb R^{8,8}}=\langle u,Eu'\rangle_{\mathbb R^{8,8}}\quad\text{for}\quad u,u'\in\mathbb R^{8,8},
\end{equation} 
where the scalar product in~\eqref{eq:eqsym8} is the scalar product
defined 
by the quadratic form~$\mathbb Q_{8,8}$. Define
$E=\prod_{j=1}^{8}J_{y_j}$ 
to be a volume form for the Clifford algebra $\Cl_{0,8}$. Then it is easy to check that $E$ 
satisfies conditions~\eqref{eq:E8} and~\eqref{eq:eqsym8}.

Denote $\widetilde V=V\otimes\mathbb R^{8,8}$,
$\langle\cdot\,,\cdot\rangle_{\widetilde
  V}=\langle\cdot\,,\cdot\rangle_V\langle\cdot\,,\cdot\rangle_{\mathbb
  R^{8,8}}$ and notice that the scalar product
$\langle\cdot\,,\cdot\rangle_{\widetilde V}$ is non-degenerate. 
Set  
\begin{align*}
\mathbb{R}^{r,s+8}\ni\tilde{z_i}&\mapsto \widetilde J_{\tilde
  z_i}=J_{z_i}\otimes E\in \End(V\otimes\mathbb{R}^{8,8}),\quad i=1,\ldots,r,
\\
\mathbb{R}^{r,s+8}\ni\tilde{\zeta_k}&\mapsto \widetilde J_{\tilde\zeta_k}=J_{\zeta_k}\otimes E\in \End(V\otimes\mathbb{R}^{8,8}),\quad k=1,\ldots,s,
\\
\mathbb{R}^{r,s+8}\ni\tilde{\zeta}_{s+j}&\mapsto\widetilde J_{\tilde\zeta_{s+j}}=\Id_V\otimes J_{y_j}\in \End(V\otimes\mathbb{R}^{8,8}),\quad j=1,\ldots,8,
\end{align*}
where $J_{z_i},J_{\zeta_k}\in\End(V)$, $i=1,\ldots,r$, $k=1,\ldots,s$, such that $J_{z_i}^2=-\Id_V$, $J_{\zeta_k}^2=\Id_V$, and $J_{y_j}\in\End(\mathbb R^{8,8})$, $j=1,\ldots,8$, such that $J_{y_j}^2=\Id_{\mathbb R^{8,8}}$. Then, it is easy to see that
$$
\widetilde J_{\tilde z_i}^2=-\Id_{\widetilde V}\ \text{ for }\ i=1,\ldots,r,\qquad \widetilde J_{\tilde\zeta_k}^2=\Id_{\widetilde V}\ \text{ for }\ k=1,\ldots,s+8.
$$
Moreover, we have for $v\otimes e\in V\otimes\mathbb{R}^{8,8}$,

$$
\widetilde J_{\tilde \zeta_{s+j_1}}\widetilde J_{\tilde \zeta_{s+j_2}}v\otimes e
=v\otimes J_{y_{j_1}}J_{y_{j_2}}e=-v\otimes J_{y_{j_2}}J_{y_{j_1}}e
=-\widetilde J_{\tilde \zeta_{s+j_2}}\widetilde J_{\tilde \zeta_{s+j_1}}v\otimes e
$$
for $j_1,j_2=1,\ldots,8$, $j_1\neq j_2$. 
$$
\widetilde J_{\tilde \zeta_k}\widetilde J_{\tilde \zeta_{s+j}}v\otimes e=J_{\zeta_k}v\otimes EJ_{y_j}e=-
J_{\zeta_k}v\otimes J_{y_j}Ee=-\widetilde J_{\tilde \zeta_{s+j}}\widetilde J_{\widetilde \zeta_k}v\otimes e
$$
for $k=1,\ldots,s$, $j=1,\ldots, 8$.
$$
\widetilde J_{\tilde z_i}\widetilde J_{\tilde \zeta_{s+j}}v\otimes e=J_{z_i}v\otimes EJ_{y_j}e=-
J_{z_i}v\otimes J_{y_j}Ee=-\widetilde J_{\tilde \zeta_{s+j}}\widetilde J_{\widetilde z_i}v\otimes e
$$
for $i=1,\ldots,r$, $j=1,\ldots, 8$.
$$
\widetilde J_{\tilde z_{i_1}}\widetilde J_{\tilde z_{i_2}}v\otimes e=J_{z_{i_1}}J_{z_{i_2}}v\otimes E^2e=-
J_{z_{i_2}}J_{z_{i_1}}v\otimes E^2e=-\widetilde J_{\tilde z_{i_2}}\widetilde J_{\tilde z_{i_1}}v\otimes e
$$
for $i_1,i_2=1,\ldots,r$, $i_1\neq i_2$.
$$
\widetilde J_{\tilde \zeta_{k_1}}\widetilde J_{\tilde \zeta_{k_2}}v\otimes e=J_{\zeta_{k_1}}J_{\zeta_{k_2}}v\otimes E^2e=-
J_{\zeta_{k_2}}J_{\zeta_{k_1}}v\otimes E^2e=-\widetilde J_{\tilde \zeta_{k_2}}\widetilde J_{\tilde \zeta_{k_1}}v\otimes e
$$
for $k_1,k_2=1,\ldots,s$, $k_1\neq k_2$.
$$
\widetilde J_{\tilde z_{i}}\widetilde J_{\tilde \zeta_{k}}v\otimes e=J_{z_{i}}J_{\zeta_{k}}v\otimes E^2e=-
J_{\zeta_{k}}J_{z_{i}}v\otimes E^2e=-\widetilde J_{\tilde \zeta_{k}}\widetilde J_{\tilde z_{i}}v\otimes e
$$
for $i=1,\ldots,r$, $k=1,\ldots,s$.

The next step is to verify that the scalar product $\langle\cdot\,,\cdot\rangle_{\widetilde V}=\langle\cdot\,,\cdot\rangle_V\langle\cdot\,,\cdot\rangle_{\mathbb R^{8,8}}$ satisfies
$$
\langle \widetilde J_{\tilde z}\tilde v,\tilde v'\rangle_{\widetilde V}+\langle \tilde v,\widetilde J_{\tilde z}\tilde v'\rangle_{\widetilde V}=0.
$$
We write $\tilde z=a+b$, where $a\in\mathbb R^{r,s}$, $b\in \mathbb R^{0,8}$, and $\tilde v=v\otimes u$, $\tilde v'=v'\otimes u'$ for $v,v'\in V$, $u,u'\in \mathbb R^{8,8}$. Then \begin{align}\label{al:skew}
&\langle \widetilde J_{\tilde z}\tilde v,
\tilde v'\rangle_{\widetilde{V}} +\langle \tilde v,
\widetilde J_{\tilde z}\tilde v'\rangle_{\widetilde V}
=
\langle (\widetilde J_{a}+\widetilde J_{b})
\tilde v,\tilde v'\rangle_{\widetilde V}
+\langle \tilde v,(\widetilde J_{a}+\widetilde J_{b})\tilde v'\rangle_{\widetilde V} 
\\
&=
\langle \widetilde J_{a}\tilde v,
\tilde v'\rangle_{\widetilde V}
+\langle \widetilde J_{b}\tilde v,\tilde v'\rangle_{\widetilde V} 
+
\langle \tilde v,\widetilde J_{a}
\tilde v'\rangle_{\widetilde V}+\langle \tilde v,\widetilde J_{b}\tilde v'\rangle_{\widetilde V} \nonumber
\\
&=
\langle J_{a} v\otimes Eu,v'\otimes u'\rangle_{\widetilde V}+\langle v\otimes u,J_{a}v'\otimes Eu'\rangle_{\widetilde V} \nonumber
\\
&+
\langle v\otimes J_{b} u,v'\otimes u'\rangle_{\widetilde V}+\langle v\otimes u,v'\otimes J_{b}u'\rangle_{\widetilde V} \nonumber
\\
&=
(J_{a} v,v')_{V}\langle  Eu,u'\rangle_{\mathbb R^{8,8}}+(v,J_{a}v')_{V}\langle  u,Eu'\rangle_{\mathbb R^{8,8}} \nonumber
\\
&+
(v,v')_{V}\Big(\underbrace{\langle J_{b}u,u'\rangle_{\mathbb R^{8,8}}+\langle u,J_{b}u'\rangle_{\mathbb R^{8,8}}}_{=0}\Big) \nonumber
\\
&=
\Big((J_{a} v,v')_{V}+(v,J_{a}v')_{V}\Big)\langle  Eu,u'\rangle_{\mathbb R^{8,8}}=0. \nonumber
\end{align}

To show that the resulting $\Cl_{r,s+8}$-module is integral 
we assume that both modules $(V,\langle\cdot\,,\cdot\rangle_V)$, $(\mathbb R^{8,8},\langle\cdot\,,\cdot\rangle_{\mathbb R^{8,8}})$ are integral. 
Then, if $\{v_{\alpha}\}_{\alpha=1}^{\dim V}$ and
$\{e_{p}\}_{p=1}^{16}$ 
are integral bases for $V$ and $\mathbb R^{8,8}$ respectively, we denote by $\{\tilde v_n=v_{\alpha}\otimes e_p\}_{n=1}^{16\dim V}$ the basis of $\widetilde V$. 
We assumed that the maps $J_{y_j}$, $j=1,\ldots, 8$, permute 
the basis $\{e_{p}\}_{p=1}^{16}$ up to sign. Then the map $E$ 
also permutes the basis $\{e_{p}\}_{p=1}^{16}$. We have 
\begin{equation*}
\langle\widetilde J_{\tilde z_i}\tilde v_{n},\tilde v_{m}\rangle_{\widetilde V}=
\langle J_{z_i}v_{\alpha}\otimes Ee_{p},v_{\beta}\otimes e_{q}\rangle_{\widetilde V}=
\langle J_{z_i}v_{\alpha}, v_{\beta}\rangle_V\cdot \langle Ee_{p},e_{q}\rangle_{\mathbb R^{8,8}}=\pm1\text{ or }0,
\end{equation*}
\begin{equation*}
\langle\widetilde J_{\tilde \zeta_k}\tilde v_{n},\tilde v_{m}\rangle_{\widetilde V}=
\langle J_{\zeta_k}v_{\alpha}\otimes Ee_{p},v_{\beta}\otimes e_{q}\rangle_{\widetilde V}=
\langle J_{\zeta_k}v_{\alpha}, v_{\beta}\rangle_V\cdot \langle Ee_{p},e_{q}\rangle_{\mathbb R^{8,8}}=\pm1\text{ or }0,
\end{equation*} for all $i=1,\ldots,r$, $k=1,\ldots,s$ and $n,m=1,\ldots,16\dim V$. Analogously
\begin{equation*}
\langle\widetilde J_{\tilde \zeta_{s+j}}\tilde v_{n},\tilde v_{m}\rangle_{\widetilde V}=
\langle v_{\alpha}\otimes J_{y_j}e_{p},v_{\beta}\otimes e_{q}\rangle_{\widetilde V}=
\langle v_{\alpha}, v_{\beta}\rangle_V\cdot \langle J_{y_j}e_{p},e_{q}\rangle_{\mathbb R^{8,8}}=\pm1\text{ or }0,
\end{equation*}
 for all $j=1,\ldots,8$ and $n,m=1,\ldots,16\dim V$.
 \end{proof}

\begin{theorem}\label{th:r8s}
Let us assume that $(V,\langle\cdot\,,\cdot\rangle_V)$ is an admissible integral $\Cl_{r,s}$-module and $(U,(\cdot\,,\cdot)_U)$ is an admissible integral $\Cl_{8,0}$-module, where the representations $J_{y_j}\in\End(U)$ permute the integral basis of $U$ up to sign for all orthonormal generators $y_j$ of the Clifford algebra $\Cl_{8,0}$. Then the scalar product vector space $\big(V\otimes U,\langle\cdot\,,\cdot\rangle_V(\cdot\,,\cdot)_U\big)$ is an admissible integral $\Cl_{r+8,s}$-module.
\end{theorem}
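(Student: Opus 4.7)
The approach is a direct analogue of the proof of Theorem~\ref{th:rs8}. Let $z_1,\ldots,z_r,\zeta_1,\ldots,\zeta_s$ be orthonormal generators of $\Cl_{r,s}$, let $y_1,\ldots,y_8$ be orthonormal generators of $\Cl_{8,0}$ (now with $(y_j,y_j)_{\mathbb{R}^{8,0}}=1$, so $J_{y_j}^2=-\Id_U$), and let $\tilde z_1,\ldots,\tilde z_{r+8},\tilde\zeta_1,\ldots,\tilde\zeta_s$ denote orthonormal generators of $\Cl_{r+8,s}$, where $\tilde z_{r+j}$ is identified with $y_j$. I will build a representation on the non-degenerate scalar product space $\widetilde V:=V\otimes U$ with $\langle\cdot\,,\cdot\rangle_{\widetilde V}:=\langle\cdot\,,\cdot\rangle_V(\cdot\,,\cdot)_U$.

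The key auxiliary object is the volume operator $E:=J_{y_1}J_{y_2}\cdots J_{y_8}\in\End(U)$. Since any two distinct $J_{y_j}$ anticommute, each $J_{y_k}$ crosses the other seven factors in $E$, yielding $J_{y_k}E=(-1)^{7}EJ_{y_k}=-EJ_{y_k}$, so $E$ anticommutes with every $J_{y_j}$. A standard sign count gives
\[
E^2=(-1)^{8\cdot 7/2}\prod_{j=1}^{8}J_{y_j}^2=(-1)^{28}(-1)^{8}\Id_U=\Id_U,
\]
so $E$ is an involution. Iterating $(J_{y_j}u,u')_U=-(u,J_{y_j}u')_U$ one obtains $(Eu,u')_U=(u,J_{y_8}\cdots J_{y_1}u')_U$, and since reversing eight mutually anticommuting factors contributes the sign $(-1)^{8\cdot 7/2}=+1$, this yields $(Eu,u')_U=(u,Eu')_U$. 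Thus $E$ is a symmetric isometric involution anticommuting with all $J_{y_j}$.

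Having $E$ in hand, I would define
\begin{align*}
\widetilde J_{\tilde z_i}&=J_{z_i}\otimes E,\qquad i=1,\ldots,r,\\
\widetilde J_{\tilde z_{r+j}}&=\Id_V\otimes J_{y_j},\qquad j=1,\ldots,8,\\
\widetilde J_{\tilde\zeta_k}&=J_{\zeta_k}\otimes E,\qquad k=1,\ldots,s,
\end{align*}
and verify the Clifford relations exactly as in Theorem~\ref{th:rs8}. Computing squares is immediate: $\widetilde J_{\tilde z_i}^2=J_{z_i}^2\otimes E^2=-\Id_{\widetilde V}$, $\widetilde J_{\tilde z_{r+j}}^2=\Id_V\otimes J_{y_j}^2=-\Id_{\widetilde V}$, and $\widetilde J_{\tilde\zeta_k}^2=J_{\zeta_k}^2\otimes E^2=\Id_{\widetilde V}$; the single qualitative difference from the proof of Theorem~\ref{th:rs8} is that here the minus sign in $\widetilde J_{\tilde z_{r+j}}^2=-\Id_{\widetilde V}$ comes from $J_{y_j}^2=-\Id_U$ rather than from $E^2$. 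Pairwise anticommutativity between the $\widetilde J$'s reduces to the anticommutativity of $E$ with each $J_{y_j}$ together with the Clifford relations internal to $V$ and $U$. The skew-symmetry identity $\langle\widetilde J_{\tilde z}\tilde v,\tilde v'\rangle_{\widetilde V}+\langle\tilde v,\widetilde J_{\tilde z}\tilde v'\rangle_{\widetilde V}=0$ is then verified by the same computation as~\eqref{al:skew}, the symmetry of $E$ being used for generators of the first and third type and the skew-symmetry of $J_{y_j}$ for those of the second type.

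For integrality, pick integral bases $\{v_\alpha\}$ of $V$ and $\{e_p\}$ of $U$; then $\{v_\alpha\otimes e_p\}$ is an orthonormal basis of $\widetilde V$. By hypothesis each $J_{y_j}$ permutes $\{e_p\}$ up to sign, and therefore so does the product $E$. Consequently every matrix coefficient of the form $\langle\widetilde J_{\tilde z}(v_\alpha\otimes e_p),v_\beta\otimes e_q\rangle_{\widetilde V}$ factors as a product of two numbers in $\{0,\pm 1\}$ and hence lies in $\{0,\pm 1\}$, which is the required integrality. The only subtle point in the whole argument is the sign bookkeeping in $E^2$ and in the symmetry check for $E$; both succeed precisely because $8\equiv 0\pmod 4$, and the analogous construction would fail if one tried to run it with $\Cl_{k,0}$ in place of $\Cl_{8,0}$ for $k\not\equiv 0\pmod 8$.
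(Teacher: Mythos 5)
Your proposal mirrors the paper's proof essentially step for step: you define the same volume element $E=J_{y_1}\cdots J_{y_8}$, verify the same three properties (anticommutation with each $J_{y_j}$, $E^2=\Id_U$, symmetry), build the twisted tensor representation $J_{z_i}\otimes E$, $J_{\zeta_k}\otimes E$, $\Id_V\otimes J_{y_j}$ identically, and establish skew-symmetry and integrality by the computation that the paper itself simply defers to Theorem~\ref{th:rs8}. Your closing aside that the sign bookkeeping "would fail for $k\not\equiv 0\pmod 8$" is slightly too strong and in fact contradicts your own earlier sentence: as you correctly note, $E^2=\Id$, the symmetry of $E$, and the anticommutation with all $J_{y_j}$ already hold whenever $k\equiv 0\pmod 4$ (which is exactly what makes the $\Cl_{4,4}$-version in Theorem~\ref{th:r4s4} work), though this is peripheral and does not affect the body of the argument.
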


\begin{proof}
Let $(z_1,\ldots,z_r,\zeta_1,\ldots,\zeta_s)$ be 
orthonormal generators of the Clifford algebra $\Cl_{r,s}$ 
with the quadratic form $\mathbb
Q_{r,s}(a)=\sum_{i=1}^{r}a_i^2-\sum_{k=1}^{s}\alpha_k^2$ for
$a=\sum_{i=1}^{r} a_iz_i+\sum_{k=1}^{s} \alpha_k\zeta_k$. Let
$\{y_1,\ldots,y_8\}$ be orthonormal 
generators for $\Cl_{8,0}$ with quadratic form $\mathbb Q_{8,0}(b)=\sum_{j=1}^{8}b_j^2$ for $b=\sum_{j=1}^{8}b_jy_j$ and, finally let $\tilde z_1,\ldots,\tilde z_{r+8},\tilde\zeta_1,\ldots,\tilde\zeta_{s}$ be orthonormal generators for the Clifford algebra $\Cl_{r+8,s}$ with quadratic form $\mathbb Q_{r+8,s}(c)=\sum_{i=1}^{r+8}c_i^2-\sum_{k=1}^{s}\varsigma_k^2$ for $c=\sum_{i=1}^{r+8} c_i\tilde z_i+\sum_{k=1}^{s} \varsigma_k\tilde\zeta_k$. 

We know that the minimal admissible
$\Cl_{8,0}$-module $(U,(\cdot\,,\cdot)_U)$
is isomorphic to $\mathbb R^{16,0}=\mathbb R^{16}$ with quadratic form 
$\mathbb Q_{16}(u)=\sum_{i=1}^{16}u_i^2$ 
for $u=\sum_{i=1}^{16} u_ie_i$, where $e_i$, $i=1,\ldots,16$ 
is the standard basis in $\mathbb R^{16}$. This module is also irreducible.
Then for the endomorphisms $J_{x_j}\in \End(\mathbb R^{16})$ 
we have $J_{x_j}^2=-\Id_{\mathbb R^{16}}$, $j=1,\ldots,8$, $J_{x_i}J_{x_j}=-J_{x_j}J_{x_i}$ for $i\neq j$. 

Now we want to find $\mathscr E\in \End(\mathbb R^{16})$ satisfying conditions
\begin{equation}\label{eq:E8_2}
 \mathscr EJ_{x_j}=-J_{x_j}\mathscr E, \quad j=1,\ldots,8,\qquad \mathscr E^2=\Id,
\end{equation} 
\begin{equation}\label{eq:eqsym8_2}
(\mathscr Eu,u')_{\mathbb R^{16}}=(u,\mathscr Eu')_{\mathbb R^{16}}\quad\text{for}\quad u,u'\in\mathbb R^{10}
\end{equation} 
where the inner product in~\eqref{eq:eqsym8_2} is the standard Euclidean product. Define $\mathscr E=\prod_{j=1}^{8}J_{y_j}$ to be a volume form for $\Cl_{8,0}$. Then it is easy to check that $\mathscr E$ satisfies conditions~\eqref{eq:E8_2} and~\eqref{eq:eqsym8_2}.

Denote $\widetilde V=V\otimes\mathbb R^{16}$,
$\langle\cdot\,,\cdot\rangle_{\widetilde
  V}=\langle\cdot\,,\cdot\rangle_V(\cdot\,,\cdot)_{\mathbb
  R^{16}}$ and notice that the scalar product
$\langle\cdot\,,\cdot\rangle_{\widetilde V}$ is non-degenerate. 
Set  
\begin{align*}
\mathbb{R}^{r+8,s}\ni\tilde{z_i}&\mapsto \widetilde J_{\tilde
  z_i}=J_{z_i}\otimes \mathscr E\in \End(V\otimes\mathbb{R}^{16}),\quad i=1,\ldots,r,
\\
\mathbb{R}^{r+8,s}\ni\tilde{\zeta_k}&\mapsto \widetilde J_{\tilde\zeta_k}=J_{\zeta_k}\otimes \mathscr E\in \End(V\otimes\mathbb{R}^{16}),\quad k=1,\ldots,s,
\\
\mathbb{R}^{r+8,s}\ni\tilde{z}_{r+j}&\mapsto\widetilde J_{\tilde z_{r+j}}=\Id_V\otimes J_{x_j}\in \End(V\otimes\mathbb{R}^{16}),\quad j=1,\ldots,8,
\end{align*}
where $J_{z_i},J_{\zeta_k}\in\End(V)$, $i=1,\ldots,r$, $k=1,\ldots,s$, such that $J_{z_i}^2=-\Id_V$, $J_{\zeta_k}^2=\Id_V$, and $J_{x_j}\in\End(\mathbb R^{16})$, $j=1,\ldots,8$, such that $J_{x_j}^2=-\Id_{\mathbb R^{16}}$. Then, we finish the proof as in Theorem~\ref{th:rs8}.
\end{proof}

\begin{theorem}\label{th:r4s4}
Let us assume that $(V,\langle\cdot\,,\cdot\rangle_V)$ is an
 admissible integral $\Cl_{r,s}$-module 
and $(U,\langle\cdot\,,\cdot\rangle_U)$ is an
 admissible integral $\Cl_{4,4}$-module, where the representations $J_{y_j}\in\End(U)$ permute the integral basis of $U$ up to sign for all orthonormal generators $y_j$ of the Clifford algebra $\Cl_{4,4}$. Then the tensor 
product vector space 
$\big(V\otimes U,\langle\cdot\,,\cdot\rangle_V
\langle\cdot\,,\cdot\rangle_U\big)$ is an admissible integral $\Cl_{r+4,s+4}$-module.
\end{theorem}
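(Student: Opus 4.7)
The plan is to follow the template established by Theorems~\ref{th:rs8} and~\ref{th:r8s} almost verbatim, with $\Cl_{4,4}$ playing the role that $\Cl_{0,8}$ and $\Cl_{8,0}$ played before. First I would fix notation: let $z_1,\dots,z_r,\zeta_1,\dots,\zeta_s$ be orthonormal generators of $\Cl_{r,s}$ with $J_{z_i}^2=-\Id_V$, $J_{\zeta_k}^2=\Id_V$; let $y_1,\dots,y_4,y_5,\dots,y_8$ be orthonormal generators of $\Cl_{4,4}$ with $J_{y_j}^2=-\Id_U$ for $j=1,\dots,4$ and $J_{y_j}^2=\Id_U$ for $j=5,\dots,8$; and let $\tilde z_1,\dots,\tilde z_{r+4},\tilde\zeta_1,\dots,\tilde\zeta_{s+4}$ be orthonormal generators of $\Cl_{r+4,s+4}$.

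The key ingredient is the volume form $E:=J_{y_1}J_{y_2}\cdots J_{y_8}\in\End(U)$. A direct count shows $E^2=(-1)^{8\cdot 7/2}\prod_{j=1}^{8}J_{y_j}^2=(-1)^{28}\cdot(-1)^4\cdot(+1)^4\Id_U=\Id_U$, and since $n=8$ is even, $E$ anti-commutes with each $J_{y_j}$. Moreover, using the skew-symmetry $\langle J_{y_j}u,u'\rangle_U=-\langle u,J_{y_j}u'\rangle_U$ eight times together with $J_{y_8}\cdots J_{y_1}=(-1)^{28}E=E$, one obtains $\langle Eu,u'\rangle_U=\langle u,Eu'\rangle_U$, i.e.\ $E$ is symmetric. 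These are exactly the properties of the auxiliary operators used in Theorems~\ref{th:rs8} and~\ref{th:r8s}.

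Next I would define the representation $\widetilde J\colon\Cl_{r+4,s+4}\to\End(V\otimes U)$ by
\begin{equation*}
\widetilde J_{\tilde z_i}=J_{z_i}\otimes E\ (i\le r),\quad
\widetilde J_{\tilde \zeta_k}=J_{\zeta_k}\otimes E\ (k\le s),\quad
\widetilde J_{\tilde z_{r+j}}=\Id_V\otimes J_{y_j}\ (j\le 4),\quad
\widetilde J_{\tilde \zeta_{s+j}}=\Id_V\otimes J_{y_{4+j}}\ (j\le 4),
\end{equation*}
and endow $V\otimes U$ with the scalar product $\langle\cdot\,,\cdot\rangle_V\langle\cdot\,,\cdot\rangle_U$, which is non-degenerate. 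Then I would check, in parallel with the computations after equation~\eqref{eq:eqsym8} in the proof of Theorem~\ref{th:rs8}, that each $\widetilde J_{\tilde z_{r+j}}^2=-\Id$ and $\widetilde J_{\tilde \zeta_{s+j}}^2=+\Id$, while $\widetilde J_{\tilde z_i}^2=-\Id$ and $\widetilde J_{\tilde \zeta_k}^2=+\Id$ follow from $E^2=\Id$; the anti-commutativity of distinct $\widetilde J$'s reduces to either the anti-commutativity of the $J_{y_j}$ (when both new generators occur) or the anti-commutativity of $E$ with the $J_{y_j}$ (when one new and one old occur) or to that of the original $J_{z_i},J_{\zeta_k}$ (when both old). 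Skew-symmetry is verified exactly as in the display~\eqref{al:skew}, the crucial point being $\langle Eu,u'\rangle_U=\langle u,Eu'\rangle_U$, which cancels the two cross-terms coming from the old generators.

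Finally, integrality is inherited: the hypothesis on $U$ states that each $J_{y_j}$ permutes the given integral basis $\{e_p\}$ of $U$ up to sign, whence so does their composition $E$. Therefore the basis $\{v_\alpha\otimes e_p\}$ of $V\otimes U$ is orthonormal with respect to $\langle\cdot\,,\cdot\rangle_V\langle\cdot\,,\cdot\rangle_U$, and for every new generator the matrix element factorises as a product of a $\pm 1$ or $0$ entry of $J_{z_i}$ (or $J_{\zeta_k}$, or $\Id_V$) with a $\pm 1$ or $0$ entry of $E$ (or of some $J_{y_j}$), yielding only $\pm 1$ or $0$. The only step that is not purely mechanical is checking the signs for $E^2$ and for $\langle E\cdot,\cdot\rangle_U$; these rely on $8$ being even and on the parity $8(8-1)/2=28$ being even, and are the mild book-keeping parts of the argument, but no genuinely new idea is required beyond what already appears in Theorems~\ref{th:rs8} and~\ref{th:r8s}.
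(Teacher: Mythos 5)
Your proof is correct and follows exactly the paper's route: you introduce the $\Cl_{4,4}$ volume form, verify it squares to the identity, anti-commutes with all eight generators, and is symmetric for the neutral product on $U$, then use it to twist the $\Cl_{r,s}$ factor of the tensor-product representation, checking the Clifford relations, skew-symmetry, and integrality just as in the proofs of Theorems~\ref{th:rs8} and~\ref{th:r8s}. The only difference from the paper is notational (you index all eight generators as $y_1,\dots,y_8$, while the paper writes them as $x_1,\dots,x_4,y_1,\dots,y_4$).
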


\begin{proof}
Let $(z_1,\ldots,z_r,\zeta_1,\ldots,\zeta_s)$ be 
orthonormal generators of the Clifford algebra $\Cl_{r,s}$ 
with the quadratic form $\mathbb
Q_{r,s}(a)=\sum_{i=1}^{r}a_i^2-\sum_{k=1}^{s}\alpha_k^2$ for
$a=\sum_{i=1}^{r} a_iz_i+\sum_{k=1}^{s} \alpha_k\zeta_k$. 
Let the collection $\{x_1,x_2,x_3,x_4,y_1,y_2,y_3,y_4\}$ be orthonormal generators for 
$\Cl_{4,4}$ with quadratic form 
$\mathbb{Q}_{4,4}(b)=\sum_{i=1}^{4}b_i^2-\sum_{j=1}^{4}b^2_j$ 
for $b=\sum_{i=1}^{4}b_ix_i+\sum_{j=1}^{4}b_jy_j$. Denote
$\tilde z_1,\ldots,\tilde z_{r+4},\tilde\zeta_1,\ldots,\tilde\zeta_{s+4}$ orthonormal generators for the Clifford algebra $\Cl_{r+4,s+4}$ with quadratic form $\mathbb Q_{r+4,s+4}(c)=\sum_{i=1}^{r+4}c_i^2-\sum_{k=1}^{s+4}\varsigma_k^2$ for $c=\sum_{i=1}^{r+4} c_i\tilde z_i+\sum_{k=1}^{s+4} \varsigma_k\tilde\zeta_k$. 

We know that the minimal admissible
$\Cl_{4,4}$-module $(U,\langle \cdot\,,\cdot\rangle_U)$
is isomorphic to $\mathbb R^{8,8}$ with quadratic form 
$\mathbb Q_{8,8}$.  This module is also irreducible.
Then for the endomorphisms $J_{x_i},J_{y_j}\in \End(\mathbb R^{8,8})$ 
we have $J_{x_i}^2=-\Id_{\mathbb R^{8,8}}$, $J_{y_j}^2=\Id_{\mathbb R^{8,8}}$, $j=1,\ldots,4$, and moreover all of them mutually anti-commute. 

We define the endomorphism $\mathcal E\colon\mathbb R^{8,8}\to \mathbb R^{8,8}$ by $\mathcal E=\prod_{i=1}^{4}J_{x_i}\prod_{j=1}^{4}J_{y_j}$ or in other words $\mathcal E$ is the volume form for $\Cl_{4,4}$. Then 
\begin{equation}\label{eq:F}
\mathcal E^2=\Id_{\mathbb R^{8,8}},\qquad \mathcal EJ_{x_i}=-J_{x_i}\mathcal E,\qquad \mathcal EJ_{y_i}=-J_{y_i}\mathcal E,\quad i=1,2,3,4,
\end{equation}
and 
\begin{equation}\label{eq:Fproduct}
\langle \mathcal Eu, u'\rangle_{\mathbb R^{8,8}}=\langle u, \mathcal Eu'\rangle_{\mathbb R^{8,8}}\quad\text{for all}\quad u,u'\in\mathbb R^{8,8}.
\end{equation}

Denote $\widetilde V=V\otimes U\cong V\otimes\mathbb R^{8,8}$ and non-degenerate scalar product $\langle\cdot\,,\cdot\rangle_{\widetilde V}=\langle\cdot\,,\cdot\rangle_{V}\langle\cdot\,,\cdot\rangle_{\mathbb R^{8,8}}$. The space $\widetilde V$ has dimension $16\dim V$. Set also 
$$
\begin{array}{lll}
&\widetilde J_{\tilde z_k}=J_{z_k}\otimes \mathcal E,\quad k=1,\ldots, r,\qquad &\widetilde J_{\tilde \zeta_l}=J_{\zeta_l}\otimes \mathcal E,\quad l=1,\ldots,s,
\\
& \widetilde J_{\tilde z_{k+i}}=\Id_{V}\otimes J_{x_i},\quad i=1,2,3,4,
&\widetilde J_{\tilde \zeta_{s+j}}=\Id_{V}\otimes J_{y_j}\quad j=1,2,3,4.
\end{array}
$$
Then it is easy to see that
$
\widetilde J_{\tilde z_i}^2=-\Id_{\widetilde V}$, $i=1,\ldots,r+4$, $\widetilde J_{\tilde\zeta_j}^2=\Id_{\widetilde V}$ for $j=1,\ldots,s+4$.
It can be shown as in Theorem~\ref{th:rs8} that all $\widetilde J_{\tilde z_i}$ $\widetilde J_{\tilde \zeta_j}$ mutually anti-commute and the module $\widetilde V=V\otimes U\cong V\otimes\mathbb R^{8,8}$ is admissible integral $\Cl_{r+4,s+4}$-module.
\end{proof}

\begin{proposition}
If the admissible integral $\Cl_{r,s}$-module $(V,\langle\cdot\,,\cdot\rangle_V)$ in Theorems~\ref{th:rs8},~\ref{th:r8s}, and~\ref{th:r4s4} is of minimal dimension then the resulting $\Cl_{r,s+8}$-module $\big(V\otimes \mathbb R^{8,8},\langle\cdot\,,\cdot\rangle_V\langle\cdot\,,\cdot\rangle_{\mathbb R^{8,8}}\big)$, $\Cl_{r+8,s}$-module $\big(V\otimes \mathbb R^{16},\langle\cdot\,,\cdot\rangle_V(\cdot\,,\cdot)_{\mathbb R^{16}}\big)$, and  $\Cl_{r+4,s+4}$-module $\big(V\otimes \mathbb R^{8,8},\langle\cdot\,,\cdot\rangle_V\langle\cdot\,,\cdot\rangle_{\mathbb R^{8,8}}\big)$ are minimal admissible integral modules.
\end{proposition}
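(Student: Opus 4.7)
The aim is to verify that the constructed module $V\otimes U$ of dimension $16\dim V$ attains the minimal admissible dimension of the target Clifford algebra. Writing $a(r,s)$ for the dimension of a minimal admissible $\Cl_{r,s}$-module, the hypothesis says $\dim V=a(r,s)$, and since $\dim U=16$ in all three cases, the constructed module has dimension $16\cdot a(r,s)$. Hence it suffices to establish the multiplicative relation
\[
a(r,s+8)=a(r+8,s)=a(r+4,s+4)=16\cdot a(r,s).
\]

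First I would invoke the Bott periodicity isomorphisms
$\Cl_{r,s+8}\cong\Cl_{r+8,s}\cong\Cl_{r+4,s+4}\cong\Cl_{r,s}\otimes\mathbb{R}(16)$
already recalled at the beginning of Section~\ref{sec:tensor}. This immediately yields the analogous relation at the level of irreducible modules: if $d(r,s)$ denotes the dimension of an irreducible $\Cl_{r,s}$-module, then tensoring with the simple algebra $\mathbb{R}(16)$ (whose unique irreducible is $\mathbb{R}^{16}$) gives $d(r,s+8)=d(r+8,s)=d(r+4,s+4)=16\cdot d(r,s)$. By Corollary~\ref{cor:1}, the minimal admissible dimension $a(r,s)$ equals either $d(r,s)$ or $2d(r,s)$, and which of the two occurs is governed by the real algebraic type of the Clifford algebra, as displayed (with circled entries marking the doubled cases) in the Appendix table. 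I would then argue that tensoring with the matrix algebra $\mathbb{R}(16)$ does not alter the commutant type of the irreducible representation: if $K\in\{\mathbb{R},\mathbb{C},\mathbb{H},\mathbb{R}\oplus\mathbb{R},\mathbb{H}\oplus\mathbb{H}\}$ is the commutant of the irreducible $\Cl_{r,s}$-module, then $\mathbb{R}^{16}$ tensored with this module is an irreducible $\Cl_{r,s}\otimes\mathbb{R}(16)$-module with the same commutant $K$. Consequently the doubling dichotomy of Corollary~\ref{cor:1} has the same answer on both sides of every Bott periodicity isomorphism, which gives exactly the desired multiplicativity.

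The key point to justify is the invariance claim: that the existence of an admissible scalar product on an irreducible module is determined solely by the commutant $K$ and hence unaffected by the $\mathbb{R}(16)$ factor. I expect this to be the main (still minor) obstacle, since it requires either a column-by-column inspection of the Appendix table or the abstract observation that an admissible scalar product corresponds to a non-degenerate Clifford-invariant bilinear pairing, whose existence on the tensor module $U_0\otimes\mathbb{R}^{16}$ is equivalent to its existence on $U_0$ combined with the (automatic) existence of the standard pairing on $\mathbb{R}^{16}$. Once the multiplicativity $a(r',s')=16\cdot a(r,s)$ is in hand for $(r',s')\in\{(r,s+8),(r+8,s),(r+4,s+4)\}$, the constructed admissible integral module of Theorems~\ref{th:rs8},~\ref{th:r8s},~\ref{th:r4s4} has dimension $16\dim V=16\cdot a(r,s)=a(r',s')$, which forces it to be minimal and completes the proof.
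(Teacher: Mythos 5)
Your overall strategy---reduce minimality to the multiplicative relation $a(r',s')=16\,a(r,s)$ for $(r',s')\in\{(r,s+8),(r+8,s),(r+4,s+4)\}$, and derive it from Bott periodicity together with the doubling dichotomy of Corollary~\ref{cor:1}---is a reasonable first step, and it goes further than the paper's own one-line proof, which simply invokes ``periodicity~8'' of admissible and irreducible modules without elaboration.

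However, there is a genuine gap in the central step. You claim that ``which of the two occurs [$a=d$ or $a=2d$] is governed by the real algebraic type of the Clifford algebra,'' and you then argue that because tensoring with $\mathbb{R}(16)$ preserves the commutant of the irreducible representation, the doubling dichotomy must be preserved. But the Appendix table itself refutes the first claim: $\Cl_{0,2}\cong\mathbb{R}(2)$ is circled (requires doubling) while $\Cl_{6,0}\cong\mathbb{R}(8)$ and $\Cl_{3,3}\cong\mathbb{R}(8)$ are not circled, even though all three are full matrix algebras over $\mathbb{R}$ with commutant $\mathbb{R}$. The same phenomenon occurs in the quaternionic column, e.g.\ $\Cl_{0,5}\cong\mathbb{H}(2)\oplus\mathbb{H}(2)$ is circled while $\Cl_{3,0}\cong\mathbb{H}\oplus\mathbb{H}$ is not. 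So the doubling is an invariant of the quadratic form $(r,s)$, not of the underlying $\mathbb{R}$-algebra, and preservation of the commutant does not yield preservation of circling. Your fallback suggestion---that an admissible scalar product on $U_0\otimes\mathbb{R}^{16}$ is equivalent to one on $U_0$---is closer to the right idea, but it is not ``automatic'': a bilinear form on a tensor product need not split as a tensor of forms, and the $\Cl_{r,s}$-generators act here through the twist $J_z\otimes E$, so what is really needed is that a non-degenerate skew-symmetric pairing for $\Cl_{r,s+8}$ restricts (via Schur-type arguments for the known commutants) to one for $\Cl_{r,s}$ on an irreducible summand. That implication is the actual content of ``periodicity~8 for admissible modules'' and is not supplied by your argument as written.
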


\begin{proof}
The admissible integral $\Cl_{0,8}$-module $(\mathbb R^{8,8},\langle\cdot\,,\cdot\rangle_{\mathbb R^{8,8}})$ is of minimal dimension equals $16$. Since admissible and irreducible modules has periodicity $8$ the resulting $\Cl_{r,s+8}$-module $\widetilde V$ will have minimal dimension equals $\dim \widetilde V=16\dim V$. 

Similar arguments used in the cases of $\Cl_{r+8,s}$ and $\Cl_{r+4,s+4}$-modules.
\end{proof}


\subsection{Twisted tensor product}\label{twisted tensor}


In this subsection, we give two  methods of construction of an admissible module
from a given admissible module of lower dimensions by making use of tensor product.
We show two cases, that is 
a construction of an admissible module 
for $\Cl_{0,n+2}$ from that of $\Cl_{n,0}$ and $\Cl_{0,2}$.
Another one is the construction of an admissible module of $\Cl_{r+1,s+1}$ 
from that of $\Cl_{r,s}$ and $\Cl_{1,1}$. Both methods also give us the
integral structure from those of the lower dimensions. Note that these constructions not always give the minimal dimensional resulting module, even if the initial admissible modules $\Cl_{n,0}$ (or $\Cl_{r,s}$) and $\Cl_{0,2}$ (and $\Cl_{1,1}$) are of minimal dimensions. The resulting module can exceed the minimal dimension two or four times.

First,
basing on the isomorphism $\Cl_{n,0}\otimes\Cl_{0,2}\cong\Cl_{0,n+2}$
we prove the following theorem.
\begin{theorem}\label{th:higher1}
Let us assume that $(V,(\cdot\,,\cdot)_V)$ is an admissible integral $\Cl_{n,0}$-module and $(U,\langle\cdot\,,\cdot\rangle_U)$ is an admissible integral $\Cl_{0,2}$-module. Then the scalar product vector space $\big(V\otimes U,(\cdot\,,\cdot)_V\langle\cdot\,,\cdot\rangle_U\big)$ is an admissible integral $\Cl_{0,n+2}$-module.
\end{theorem}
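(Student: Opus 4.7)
The plan is to realise the Clifford algebra isomorphism $\Cl_{n,0}\otimes \Cl_{0,2}\cong \Cl_{0,n+2}$ at the level of representations on $V\otimes U$. With orthonormal generators $z_1,\dots,z_n$ of $\Cl_{n,0}$, $y_1,y_2$ of $\Cl_{0,2}$, and $\tilde\zeta_1,\dots,\tilde\zeta_{n+2}$ of $\Cl_{0,n+2}$, the algebra isomorphism is implemented by $\tilde\zeta_i\mapsto z_i\otimes y_1y_2$ for $i\le n$ and $\tilde\zeta_{n+j}\mapsto 1\otimes y_j$ for $j=1,2$; one checks by a direct calculation that these $n+2$ elements pairwise anti-commute and each square to $-1$. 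Lifting this to representations, I would naively set $\widetilde{J}_{\tilde\zeta_i}=J_{z_i}\otimes J_{y_1}J_{y_2}$ and $\widetilde{J}_{\tilde\zeta_{n+j}}=\Id_V\otimes J_{y_j}$, for which the Clifford relations $\widetilde{J}^{\,2}=\Id$ and pairwise anti-commutation follow by the same algebraic manipulation, mirroring the verifications in Theorems~\ref{th:rs8}--\ref{th:r4s4}.

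The obstruction lies in the admissibility condition, and resolving it is precisely the ``twist'' the section title refers to. Both $J_{z_i}$ and $J_{y_1}J_{y_2}$ are skew with respect to their respective scalar products (the latter being a product of two anti-commuting skew operators), so $J_{z_i}\otimes J_{y_1}J_{y_2}$ is \emph{symmetric}---not skew---with respect to the tensor product scalar product $(\cdot,\cdot)_V\langle\cdot,\cdot\rangle_U$. The fix is to replace $J_{y_1}J_{y_2}$ by a \emph{symmetric} anti-involution $\Omega\colon U\to U$ with the same anti-commutation behaviour, namely $\Omega^2=-\Id_U$, $\Omega J_{y_j}+J_{y_j}\Omega=0$ for $j=1,2$, and $\Omega$ acting on the integral basis of $U$ by a signed permutation. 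Such an $\Omega$ can be written down explicitly on the four-dimensional admissible integral $\Cl_{0,2}$-module of Section~\ref{sec:0n}: in the basis $u_1=w$, $u_2=J_{y_1}J_{y_2}w$, $u_3=J_{y_1}w$, $u_4=J_{y_2}w$ one takes $\Omega(u_1)=-u_3$, $\Omega(u_3)=u_1$, $\Omega(u_2)=u_4$, $\Omega(u_4)=-u_2$ and verifies the four properties by direct inspection.

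With this choice I would set $\widetilde{J}_{\tilde\zeta_i}=J_{z_i}\otimes \Omega$ for $i\le n$ and $\widetilde{J}_{\tilde\zeta_{n+j}}=\Id_V\otimes J_{y_j}$ for $j=1,2$, after which the remaining verifications are short. The Clifford relations follow from $J_{z_i}^2=-\Id$, $\Omega^2=-\Id$, $J_{y_j}^2=\Id$ together with the pairwise anti-commutation of the operators involved. Skew-symmetry with respect to $(\cdot,\cdot)_V\langle\cdot,\cdot\rangle_U$ follows because skew $\otimes$ symmetric and symmetric $\otimes$ skew are both skew on the tensor product. The tensor basis $\{v_\alpha\otimes u_p\}$ is orthogonal for the product scalar product, and each $\widetilde{J}$ acts on it by a signed permutation since $J_{z_i}$, $\Omega$ and $J_{y_j}$ all do so on their respective factors, yielding integrality. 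The main obstacle is the construction of $\Omega$: the canonical candidate $J_{y_1}J_{y_2}$ has the wrong parity, so one must exhibit a non-canonical symmetric anti-involution adapted to the integral basis. This is also the reason why the resulting admissible module need not be minimal, in accordance with the caveat flagged in the introduction to Section~\ref{sec:tensor}.
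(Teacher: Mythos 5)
Your proof takes essentially the same route as the paper's: the paper likewise discards the naive $J_{y_1}J_{y_2}$ (cf.\ Remark~\ref{rem:J1J2}) in favour of a symmetric anti-involution $\mathcal F$ anti-commuting with $J_{y_1},J_{y_2}$, and the explicit signed permutation $\Omega$ you write down is precisely the paper's $\mathcal F$ with $a=b=d=0$, $c=1$ in~\eqref{eq:E}. The only cosmetic difference is that the paper first parametrizes the full family of admissible $\mathcal F$'s via~\eqref{eq:E2}--\eqref{eq:eq11} and then specializes for integrality, whereas you exhibit the specialized operator directly.
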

\begin{proof}
Let $(z_1,\ldots,z_n)$ be orthonormal generators of the Clifford algebra $\Cl_{n,0}$ with the quadratic form $\mathbb Q_{n,0}(a)=\sum_{i=1}^{n}a_i^2$ for $a=\sum_{i=1}^{n} a_iz_i$. Let $\{y_1,y_2\}$ be orthonormal generators for $\Cl_{0,2}$ with quadratic form $\mathbb Q_{0,2}(b)=-b_1^2-b^2_2$ for $b=b_1y_1+b_2y_2$ and, finally let $\tilde z_1,\ldots,\tilde z_{n+2}$ be orthonormal generators for the Clifford algebra $\Cl_{0,n+2}$ with quadratic form $\mathbb Q_{0,n+2}(c)=-\sum_{i=1}^{n+2}c_i^2$ for $c=\sum_{i=1}^{n+2} c_i\tilde z_i$. The map
$$
\begin{cases}
\tilde z_i\mapsto z_i\otimes y_1y_2,\quad\text{if}\quad i=1,\ldots,n,
\\
\tilde z_{n+1}\mapsto  1\otimes y_1,
\\
\tilde z_{n+2}\mapsto 1\otimes y_2.
\end{cases}
$$
defines the isomorphism between the Clifford algebras $\Cl_{0,n+2}$ and $\Cl_{n,0}\otimes \Cl_{0,2}$.

We saw in the previous section that the minimal admissible $\Cl_{0,2}$-module $(U,\langle\cdot\,,\cdot\rangle_U)$ is isomorphic to $\mathbb R^{2,2}$ with quadratic form $\mathbb Q_{2,2}(u)=\sum_{i=1,2}u_i^2-\sum_{j=3,4}u_j^2$ for $u=\sum_{i=1}^{4} u_ie_i$, where $e_i$, $i=1,2,3,4$ is the standard basis in $\mathbb R^{2,2}$. Then the endomorphisms $J_{y_1}$ and $J_{y_2}$ from $\End(\mathbb R^{2,2})$ are written in the basis $\{e_i\}_{i=1}^{4}$ as follows 
$$
J_{y_1}=\begin{pmatrix} 
0 & 0 & 1 & 0 
\\ 
0 & 0 & 0 & 1 
\\
1 & 0 & 0 & 0 
\\
0 & 1 & 0 & 0 
\end{pmatrix}\qquad
J_{y_2}=\begin{pmatrix} 
0 & 0 & 0 & 1 
\\ 
0 & 0 & -1 & 0 
\\
0 & -1 & 0 & 0 
\\
1 & 0 & 0 & 0 
\end{pmatrix}.
$$
We have $J_{y_1}^2=\Id$, $J_{y_2}^2=\Id$, $J_{y_1}J_{y_2}=-J_{y_2}J_{y_1}$, and 
$$
J_{y_1}J_{y_2}e_1=e_2,\quad J_{y_1}e_1=e_3,\quad J_{y_2}e_1=e_4.
$$

Now we want to find $ \mathcal F\in \End(\mathbb R^{2,2})$ satisfying conditions
\begin{equation}\label{eq:E2}
 \mathcal F J_{y_1}=-J_{y_1}\mathcal F,\qquad\ \ \ \mathcal FJ_{y_2}=-J_{y_2}\mathcal F,\qquad \mathcal F^2=-\Id,
\end{equation} 
\begin{equation}\label{eq:eq11}
\langle \mathcal Fu,u'\rangle_{\mathbb R^{2,2}}=\langle u,\mathcal Fu'\rangle_{\mathbb R^{2,2}}\quad\text{for}\quad u,u'\in\mathbb R^{2,2}
\end{equation} 
where the scalar product in~\eqref{eq:eq11} is the scalar product defined by the quadratic form $\mathbb Q_{2,2}$.
Conditions~\eqref{eq:E2} imply that the matrix for $\mathcal F$ has the form
\begin{equation}\label{eq:E}
\begin{pmatrix}
a & b & c & d
\\
-b & a & d & -c
\\
-c & -d & -a & -b
\\
-d & c & b & -a
\end{pmatrix}\qquad\text{with}\quad
\begin{array}{lllll}
& a^2-b^2-c^2-d^2=-1
\\
& ab=0,\ \ bc=0,\ \  bd=0.
\end{array}
\end{equation}
Checking the condition~\eqref{eq:eq11} we find that $b=0$.

Denote $\widetilde V=V\otimes\mathbb R^{2,2}$, $\langle\cdot\,,\cdot\rangle_{\widetilde V}=(\cdot\,,\cdot)_V\langle\cdot\,,\cdot\rangle_{\mathbb R^{2,2}}$ and notice that the scalar product $\langle\cdot\,,\cdot\rangle_{\widetilde V}$ is non-degenerate. Set  
\begin{align*}
&\widetilde J_{\widetilde z_i}=J_{z_i}\otimes \mathcal F,\quad i=1,\ldots,n,
\\
& \widetilde J_{\widetilde z_{n+1}}=\Id_V\otimes J_{y_1},\quad\text{and}\quad \widetilde J_{\widetilde z_{n+2}}=\Id_V\otimes J_{y_2},
\end{align*}
where $J_{z_i}\in\End(V)$, $i=1,\ldots,n$, such that $J_{z_i}^2=-\Id$. Then, it is easy to see that
$
\widetilde J_{\tilde z_i}^2=\Id_{\widetilde V}$ for $i=1,\ldots,n+2$.
Moreover, similar to discussions in Theorem~\ref{th:rs8} we have
$$
\widetilde J_{\tilde z_{n+1}}\widetilde J_{\tilde z_{n+2}}=-\widetilde J_{\tilde z_{n+2}}\widetilde J_{\tilde z_{n+1}},\quad
\widetilde J_{\tilde z_i}\widetilde J_{\tilde z_{n+1}}=-\widetilde J_{\tilde z_{n+1}}\widetilde J_{\widetilde z_i},\quad
\widetilde J_{\tilde z_i}\widetilde J_{\tilde z_{n+2}}=-\widetilde J_{\tilde z_{n+2}}\widetilde J_{\widetilde z_i},\quad
\widetilde J_{\tilde z_i}\widetilde J_{\tilde z_j}=-\widetilde J_{\tilde z_j}\widetilde J_{\tilde z_i},
$$
for $i,j=1,\ldots,n$, $i\neq j$.

The next step is to verify that the scalar 
product $\langle\cdot\,,\cdot\rangle_{\widetilde V}
=(\cdot\,,\cdot)_V\cdot\langle\cdot\,,\cdot\rangle_{\mathbb R^{2,2}}$ satisfies
$$
\langle \widetilde J_{\tilde z}\tilde v,\tilde v'\rangle_{\widetilde
  V}
+\langle \tilde v,\widetilde J_{\tilde z}\tilde v'\rangle_{\widetilde V}=0.
$$
We write $\tilde z=a+b$, 
where $a\in\mathbb R^{n,0}$, $b\in \mathbb R^{0,2}$, 
and $\tilde v=v\otimes u$, $\tilde v'=v'\otimes u'$ 
for $v,v'\in V$, $u,u'\in \mathbb R^{2,2}$. Then we argue as in~\eqref{al:skew}.

To show that the resulting $\Cl_{0,n+2}$-module is integral we assume
that both modules $(V,\langle\cdot\,,\cdot\rangle_V)$, $(\mathbb
R^{2,2},\langle\cdot\,,\cdot\rangle_{\mathbb R^{2,2}})$ are integral
and choose special form of the map $\mathcal F$, for instance we set
$a=d=0$ and $c=1$. Then, if $\{v_{\alpha}\}$ and $\{e_p\}$ are
integral 
bases for $V$ and $\mathbb R^{2,2}$ respectively, then for $\tilde v_n=v_{\alpha}\otimes e_p$
\begin{equation*}
\langle\widetilde J_{\tilde z_i}\tilde v_n,\tilde v_m\rangle_{\widetilde V}=
\langle J_{z_i}v_{\alpha}\otimes \mathcal Fe_p,v_{\beta}\otimes e_q\rangle_{\widetilde V}=
\langle J_{z_i}v_{\alpha}, v_{\beta}\rangle_V\cdot \langle \mathcal Fe_{p},e_{q}\rangle_{\mathbb R^{2,2}}=\pm1\text{ or }0.
\end{equation*}
\end{proof}

\begin{remark}\label{rem:J1J2}
Note that $\mathcal F\neq\pm J_{y_1}J_{y_2}$. 
\end{remark}

\begin{theorem}\label{th:higher2}
Let us assume that $(V,\langle\cdot\,,\cdot\rangle_V)$ is an
admissible integral $\Cl_{r,s}$-module 
and $(U,\langle\cdot\,,\cdot\rangle_U)$ is the minimal dimensional
admissible integral $\Cl_{1,1}$-module, 
then $\big(V\otimes U, \langle\cdot\,,\cdot\rangle_V\langle\cdot\,,\cdot\rangle_U\big)$ is an admissible integral $\Cl_{r+1,s+1}$-module.
\end{theorem}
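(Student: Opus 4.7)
The plan is to follow closely the template of Theorem~\ref{th:higher1}, now using the Clifford isomorphism $\Cl_{r+1,s+1}\cong\Cl_{r,s}\otimes\Cl_{1,1}$. With orthonormal generators $(z_i,\zeta_k)$ of $\Cl_{r,s}$ (squaring to $-1$ and $+1$ respectively in the algebra), generators $(y_1,y_2)$ of $\Cl_{1,1}$ (with $y_1^2=-1$, $y_2^2=+1$), and generators $(\tilde z_i,\tilde\zeta_k)$ of $\Cl_{r+1,s+1}$, I would define the abstract algebra isomorphism by
\[
\tilde z_i\mapsto z_i\otimes y_1y_2\ (i\le r),\qquad \tilde\zeta_k\mapsto \zeta_k\otimes y_1y_2\ (k\le s),\qquad \tilde z_{r+1}\mapsto 1\otimes y_1,\qquad \tilde\zeta_{s+1}\mapsto 1\otimes y_2,
\]
and verify it using $(y_1y_2)^2=-y_1^2y_2^2=+1$ and the anti-commutation of $y_1,y_2$; all squares and anti-commutations of the images then match the defining relations of $\Cl_{r+1,s+1}$.

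At the representation level I would define, exactly as in Theorem~\ref{th:higher1},
\[
\widetilde J_{\tilde z_i}=J_{z_i}\otimes\mathcal F,\quad \widetilde J_{\tilde\zeta_k}=J_{\zeta_k}\otimes\mathcal F,\quad \widetilde J_{\tilde z_{r+1}}=\Id_V\otimes J_{y_1},\quad \widetilde J_{\tilde\zeta_{s+1}}=\Id_V\otimes J_{y_2},
\]
for a suitable $\mathcal F\in\End(U)$ that replaces the abstract element $y_1y_2$. Tracking the Clifford relations~\eqref{eq:J_Clif}, the pairwise anti-commutation, and the skew-symmetry condition~\eqref{eq:J_skew} relative to $\langle\cdot,\cdot\rangle_V\langle\cdot,\cdot\rangle_U$, the operator $\mathcal F$ must satisfy
\[
\mathcal F^2=\Id_U,\qquad \mathcal F J_{y_1}=-J_{y_1}\mathcal F,\qquad \mathcal F J_{y_2}=-J_{y_2}\mathcal F,\qquad \langle\mathcal Fu,u'\rangle_U=\langle u,\mathcal Fu'\rangle_U,
\]
together with the property that $\mathcal F$ permute the integral basis of $U$ up to sign. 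The main obstacle, as already flagged in Remark~\ref{rem:J1J2} for the case of $\Cl_{0,2}$, is that the naive choice $\mathcal F=J_{y_1}J_{y_2}$ fails: being a product of two anti-commuting skew-symmetric operators on a scalar-product space, it is skew-symmetric rather than symmetric, which ruins the skew-symmetry check for $\widetilde J_{\tilde z_i}$.

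To resolve this, I would use the integral basis $\{v_1,v_2,v_3,v_4\}=\{w,J_{y_1}w,J_{y_2}w,J_{y_2}J_{y_1}w\}$ of the minimal admissible $\Cl_{1,1}$-module constructed in Subsection~\ref{11} and set $\mathcal Fv_1=v_1$, $\mathcal Fv_2=-v_2$, $\mathcal Fv_3=-v_3$, $\mathcal Fv_4=v_4$. Symmetry and involutivity are immediate from the diagonal form in an orthonormal basis, while the two anti-commutation relations $\mathcal F J_{y_i}=-J_{y_i}\mathcal F$ follow by a direct check on each basis vector using the explicit action of $J_{y_1},J_{y_2}$ recorded in Subsection~\ref{11}. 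Moreover $\mathcal F$ permutes the integral basis (up to sign). With such $\mathcal F$ in hand, the verifications that $\widetilde J_{\tilde z_i}^2=\widetilde J_{\tilde z_{r+1}}^2=-\Id$ and $\widetilde J_{\tilde\zeta_k}^2=\widetilde J_{\tilde\zeta_{s+1}}^2=\Id$, together with the pairwise anti-commutation of all the $\widetilde J$'s, reduce to elementary identities in $\mathcal F,J_{y_1},J_{y_2}$ combined with the Clifford relations on $V$; the skew-symmetry computation splits exactly as in~\eqref{al:skew}, using symmetry of $\mathcal F$ for the horizontal generators and skew-symmetry of $J_{y_1},J_{y_2}$ for the two new vertical ones. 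Integrality is finally immediate: for the tensor basis $\tilde v_{\alpha,p}=v_\alpha\otimes u_p$ of $V\otimes U$, products such as $\langle\widetilde J_{\tilde z_i}\tilde v_{\alpha,p},\tilde v_{\beta,q}\rangle_{\widetilde V}=\langle J_{z_i}v_\alpha,v_\beta\rangle_V\langle\mathcal Fu_p,u_q\rangle_U$ lie in $\{0,\pm1\}$, the first factor by integrality of the given $\Cl_{r,s}$-module and the second by the sign-permutation property of $\mathcal F$; the analogous identities for $\widetilde J_{\tilde\zeta_k}$, $\widetilde J_{\tilde z_{r+1}}$, $\widetilde J_{\tilde\zeta_{s+1}}$ are handled identically.
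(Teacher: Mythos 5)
Your proposal is correct and follows essentially the same route as the paper's proof: the same isomorphism $\Cl_{r+1,s+1}\cong\Cl_{r,s}\otimes\Cl_{1,1}$, the same definitions of the operators $\widetilde{J}_{\tilde z_i}=J_{z_i}\otimes\mathcal F$, $\widetilde{J}_{\tilde\zeta_k}=J_{\zeta_k}\otimes\mathcal F$, $\widetilde{J}_{\tilde z_{r+1}}=\Id_V\otimes J_{y_1}$, $\widetilde{J}_{\tilde\zeta_{s+1}}=\Id_V\otimes J_{y_2}$ built from a symmetric involution $\mathcal F$ anti-commuting with $J_{y_1},J_{y_2}$, and the very same integrality-preserving choice $\mathcal F=\mathrm{diag}(1,-1,-1,1)$ on the integral basis of $U$ (the paper's specialization $a=1$, $b=c=d=0$). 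The only difference is presentational: the paper first records the general matrix form of all operators $F$ satisfying the required relations and then picks the integral one, whereas you argue directly that $J_{y_1}J_{y_2}$ is skew-symmetric rather than symmetric and then exhibit the correct $\mathcal F$ on the integral basis.
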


\begin{proof}
Let $(z_1,\ldots,z_r, \zeta_{1},\ldots,\zeta_{s})$ be orthonormal generators of the Clifford algebra $\Cl_{r,s}$ with the quadratic form $\mathbb Q_{r,s}(a)=\sum_{i=1}^{r}a_i^2-\sum_{j=1}^{s}a_j^2$ for $a=\sum_{i=1}^{r} a_iz_i+\sum_{j=1}^{s} a_j\zeta_j$ and let $\{x,y\}$ be orthonormal generators for $\Cl_{1,1}$ with quadratic form $\mathbb Q_{2,2}(b)=b_1^2-b^2_2$ for $b=b_1x+b_2y$. Denote by $(\tilde z_1,\ldots,\tilde z_{r+1}, \tilde \zeta_{1},\ldots,\tilde \zeta_{s+1})$ orthonormal generators for the Clifford algebra $\Cl_{r+1,s+1}$ with the quadratic form $\mathbb Q_{r+1,s+1}(c)=\sum_{i=1}^{r+1}c_i^2-\sum_{j=1}^{s+1}c_j^2$ for $c=\sum_{i=1}^{r+1} c_i\tilde z_i+\sum_{j=1}^{s+1} c_j\tilde\zeta_j$. It is known that there is the isomorphism between $\Cl_{r+1,s+1}$ and $\Cl_{r,s}\otimes\Cl_{1,1}$ given by the following relation between the generators $\tilde z_1,\ldots, \tilde z_{r+1},\tilde\zeta_1,\ldots,\tilde\zeta_{s+1}$ of $\Cl_{r+1,s+1}$ and generators of $\Cl_{r,s}\otimes\Cl_{1,1}$:
$$  
\begin{cases}
\tilde z_i\cong z_i\otimes xy,\quad\text{if}\quad i=1,\ldots,r,
\\
\tilde\zeta_j\cong \zeta_i\otimes xy,\quad\text{if}\quad j=1,\ldots,s,
\\
\tilde z_{r+1}\cong 1\otimes x,
\\
\tilde \zeta_{s+1}\cong 1\otimes y.
\end{cases}
$$

We saw in the previous section that the admissible $\Cl_{1,1}$-module $(U,\langle\cdot\,,\cdot\rangle_U)$ is isomorphic to $\mathbb R^{2,2}$ with quadratic form $\mathbb Q_{2,2}(u)=\sum_{i=1}^{2}u_i^2-\sum_{j=3}^{4}u_j^2$ for $u=\sum_{i=1}^{4} u_ie_i$, where $e_i$, $i=1,2,3,4$ is the standard basis in $\mathbb R^{2,2}$. Then the endomorphisms $J_{y_1}$ and $J_{y_2}$ from $\End(\mathbb R^{2,2})$ are written in the basis $\{e_i\}_{i=1}^{4}$ as follows 
$$
J_{x}=\begin{pmatrix} 
0 & -1 & 0 & 0 
\\ 
1 & 0 & 0 & 0 
\\
0 & 0 & 0 & 1 
\\
0 & 0 & -1 & 0 
\end{pmatrix}\quad
J_{y}=\begin{pmatrix} 
0 & 0 & 1 & 0 
\\ 
0 & 0 & 0 & 1 
\\
1 & 0 & 0 & 0 
\\
0 & 1 & 0 & 0 
\end{pmatrix}\quad
J_{y}J_{x}=\begin{pmatrix} 
0 & 0 & 0 & 1 
\\ 
0 & 0 & -1 & 0 
\\
0 & -1 & 0 & 0 
\\
1 & 0 & 0 & 0 
\end{pmatrix}.
$$
Then $J_{x}^2=-\Id$, $J_{y}^2=\Id$, $J_{x}J_{y}=-J_{y}J_{x}$, and 
$
J_{x}e_1=e_2$, $J_ye_1=e_3$, $J_{y}J_{x}e_1=e_4$.

We need to find the endomorphism $F\colon\mathbb R^{2,2}\to \mathbb R^{2,2}$ such that 
\begin{equation}\label{eq:F11}
F^2=\Id,\qquad FJ_{x}=-J_{x}F,\qquad FJ_{y}=-J_{y}F,
\end{equation}
\begin{equation}\label{eq:Fproduct11}
\langle Fu, u'\rangle_{\mathbb R^{2,2}}=\langle u, Fu'\rangle_{\mathbb R^{2,2}}\quad\text{for all}\quad u,u'\in\mathbb R^{2,2}.
\end{equation}

Checking conditions~\eqref{eq:F11} and~\eqref{eq:Fproduct11} we find that matrix for $F$ has the form
$$
\begin{pmatrix}
a&b&c&d
\\
b&-a&d&-c
\\
-c&-d&-a,&-b
\\
-d&c&-b&a
\end{pmatrix}\qquad\text{with}\qquad 
\begin{array}{ll}
&a^2+b^2-c^2-d^2=1
\\
& bc=ad.
\end{array}
$$

Denote $\widetilde V=V\otimes\mathbb R^{2,2}$ and non-degenerate scalar product $\langle\cdot\,,\cdot\rangle_{\widetilde V}=\langle\cdot\,,\cdot\rangle_V\cdot\langle\cdot\,,\cdot\rangle_{\mathbb R^{2,2}}$. Set also 
$$
\begin{array}{lll}
&\widetilde J_{\tilde z_i}=J_{z_i}\otimes F,\quad i=1,\ldots,r,\qquad
&\widetilde J_{\tilde \zeta_j}=J_{\zeta_i}\otimes F,\quad j=1,\ldots,s,
\\
& \widetilde J_{\tilde z_{r+1}}=\Id_V\otimes J_{x},
&\widetilde J_{\tilde \zeta_{s+1}}=\Id_V\otimes J_{y}.
\end{array}
$$
Here $J_{z_i},J_{\zeta_j}\in \End(V)$, $i=1,\ldots,r$, $j=1,\ldots,s$ are such that $J_{z_i}^2=-\Id$, $J_{\zeta_j}^2=\Id$. Then it is easy to see that
$
\widetilde J_{\tilde z_i}^2=-\Id_{\widetilde V}$, $\widetilde J_{\tilde\zeta_j}^2=\Id_{\widetilde V}$
for $i=1,\ldots,r+1,\ j=1,\ldots,s+1$ due to~\eqref{eq:F11} and $J^2_x=-\Id_{\mathbb R^{2,2}}$, $J^2_y=\Id_{\mathbb R^{2,2}}$. We also obtain that $\widetilde J_{\tilde z_{i}}$ and $\widetilde J_{\tilde \zeta_{j}}$ mutually anti-commute for all
$i=1,\ldots,r+1$ and
for $j=1,\ldots,s+1$. Now we verify as in~\eqref{al:skew}
that the scalar product $\langle\cdot\,,\cdot\rangle_{\widetilde V}=\langle\cdot\,,\cdot\rangle_V\cdot\langle\cdot\,,\cdot\rangle_{\mathbb R^{2,2}}$ satisfies the skew symmetry property. As in the previous case we show that if both modules $(V,\langle\cdot\,,\cdot\rangle_V)$, $(\mathbb R^{2,2},\langle\cdot\,,\cdot\rangle_{\mathbb R^{2,2}})$ are integral, then the resulting module is integral. For this we can choose the map $F$ with the matrix having the entries $a=1$ and $b=c=d=0$.
\end{proof}

\begin{remark}\label{rem:F}
Observe that $J_xJ_y\neq \pm F$ since if we choose $d=\pm 1$, then $a$ or $b$ must be different from zero.
\end{remark}


\section{Integral $\Cl_{r,s}$-modules with $r+s\geq 9$}\label{sec:g9}



\subsection{Integral structure on $\Cl_{r,s}$-modules of dimension $r+s= 9$}

  
Since 
$$
\Cl_{9,0}\cong\Cl_{1,0}\otimes\Cl_{8,0},\qquad
\Cl_{8,1}\cong\Cl_{0,1}\otimes\Cl_{8,0}
$$   
we apply Theorem~\ref{th:r8s}. For the cases
$$
\Cl_{0,9}\cong\Cl_{0,1}\otimes\Cl_{0,8},\qquad
\Cl_{1,8}\cong\Cl_{1,0}\otimes\Cl_{0,8}
$$  we apply Theorem~\ref{th:rs8}. We use Theorem~\ref{th:r4s4} and get the integral structure due to the isomorpfisms
$$
\Cl_{5,4}\cong\Cl_{1,0}\otimes\Cl_{4,4},\qquad
\Cl_{4,5}\cong\Cl_{0,1}\otimes\Cl_{4,4}.
$$
For $\Cl_{6,3}$- and $\Cl_{2,7}$-modules we exploit the isomorphisms 
$$
\Cl_{6,3}\cong\Cl_{5,2}\otimes\Cl_{1,1},\qquad
\Cl_{2,7}\cong\Cl_{1,6}\otimes\Cl_{1,1}
$$
and Theorem~\ref{th:higher2}. In this case, counting dimensions, we conclude that $\Cl_{6,3}$ and $\Cl_{2,7}$ are of minimal dimensions. Note that we could only construct the admissible integral module $\Cl_{2,7}\cong\Cl_{1,6}\otimes\Cl_{1,1}$ and then use Theorem~\ref{prop:7} and Corollary\ref{cor:3} to justify the existence of an integral structure on $\Cl_{6,3}$ module, by the isomorphism $\Cl_{6,3}\cong\Cl_{2,7}$.
We also apply Theorem~\ref{prop:7} and Corollary~\ref{cor:3} to the isomorphic Clifford algebras
$$
\Cl_{s,r+1}=\Cl_{5,4}\cong\Cl_{3,6}=\Cl_{r,s+1}\quad\text{and}\quad
\Cl_{s,r+1}=\Cl_{1,8}\cong\Cl_{7,2}=\Cl_{r,s+1}
$$
and pullback the integral admissible structure of $\Cl_{s,r+1}$-module to the module of $\Cl_{r,s+1}$.

\subsection{Integral structure on $\Cl_{r,s}$-modules of dimension $r+s= 10$}

  
The integral admissible module for Clifford algebras $\Cl_{10,0}$, $\Cl_{9,1}$, and $\Cl_{8,2}$ we construct by applying Theorem~\ref{th:r8s}. For the cases of
$\Cl_{2,8}$, $\Cl_{1,9}$, and $\Cl_{0,10}$
we use Theorem~\ref{th:rs8}. We use Theorem~\ref{th:r4s4} and get the integral structure for modules of $\Cl_{4,6}$, $\Cl_{5,5}$, and $\Cl_{6,4}$. 
We also apply Theorem~\ref{prop:7} to the isomorphic Clifford algebras
$$
\Cl_{s,r+1}=\Cl_{6,4}\cong\Cl_{3,7}=\Cl_{r,s+1}\quad\text{and}\quad
\Cl_{s,r+1}=\Cl_{2,8}\cong\Cl_{7,3}=\Cl_{r,s+1}
$$
and pullback the integral admissible structure of $\Cl_{s,r+1}$-module to the module of $\Cl_{r,s+1}$.


\subsection{Integral structure on $\Cl_{r,s}$-modules of dimension $r+s>10$}


For the rest of cases we use Theorems~\ref{th:rs8}, ~\ref{th:r8s}, and~\ref{th:r4s4}. 



\section{Final remarks}\label{fremark}


(1) The constructed admissible integral modules show that the corresponding general $H$-type Lie algebras admit integer structural constants. The natural question arises: how many different Lie algebras are behind the general $H$-type Lie algebras if we discard the presence of the scalar product? Having in hand the integral basis, it is easier to answer this question.
As we noticed in Remark~\ref{three dim Heisenberg}
both of the Lie algebras based on spaces
$\mathbb{R}^{1,1}\oplus_{\perp}\mathbb{R}^{0,1}$
and
$\mathbb{R}^{2,0}\oplus_{\perp}\mathbb{R}^{1,0}$, corresponding to $\Cl_{1,0}$- and $\Cl_{0,1}$-modules,
are isomorphic to the three dimensional Heisenberg algebra,
although the metrics are different.
The 6-dimensional Lie algebras based on vector spaces
$\mathbb{R}^{4,0}\oplus_{\perp}\mathbb{R}^{2,0}$,
$\mathbb{R}^{2,2}\oplus_{\perp}\mathbb{R}^{0,2}$, related to $\Cl_{2,0}$- and $\Cl_{0,2}$-modules,
are also isomorphic, but not isomorphic to the Lie algebra based on the space
$\mathbb{R}^{2,2}\oplus_{\perp}\mathbb{R}^{1,1}$, arising from
$\Cl_{1,1}$-module. 
This can be proved by making use of the above 
constructed integral basis. The details including these and more
general relations between
$\mathbb{R}^{k,k}\oplus_{\perp}\mathbb{R}^{r,s}$ 
is treated in a forthcoming paper, see also~\cite{Mag} and~\cite{Se} for the
classification of low dimensional nilpotent Lie algebras.

(2) Note that for construction of integral structures on admissible
$\Cl_{r,1}$-modules, we could used the isomorphism
$\Cl_{0,r+1}\cong\Cl_{r,1}$, but not Theorem~\ref{prop:r0tor1} 
and get the integral basis from ones constructed in 
Section~\ref{sec:0n}. Or vice versa, we could get all the integral
modules $\Cl_{0,r+1}$ from Section~\ref{sec:0n} by making use
Theorem~\ref{prop:7} and Corollary~\ref{cor:3} 
from modules $\Cl_{r,1}$, where we first construct $\Cl_{r,1}$-module 
by making use of Theorem~\ref{prop:r0tor1}.

(3) In Section~\ref{sec:rs} we constructed admissible integral
$\Cl_{r,0}$-modules. It is known classical case that admissible
modules for the Clifford algebras $\Cl_{r,0}$ have a positive definite
product. We based on this knowledge when we found integral structures
for $\Cl_{r,0}$-modules with $r=1,2,3,4,8$. 
For the rest of the cases we started the construction from the
assumption that the module is admissible with a neutral scalar product
and then the construction 
of integral bases leads to the sub-module with positive definite
product. This shows that our method not only gives the integral basis
but also detects the possible 
signature of the scalar product.

(4) The integral admissible $\Cl_{0,s}$-modules, constructed in
Section~\ref{sec:0n} 
could be also found by using Theorem~\ref{th:higher1}. In this case
the resulting 
$\Cl_{0,3}$- and $\Cl_{0,5}$-modules would be of minimal dimensions, 
but $\Cl_{0,s}$-modules for $r=2,4,6,7,8$ would exceed the minimal 
dimension twice. That was also a reason why we constructed all $\Cl_{0,s}$-modules directly. 


\section{Appendix}\label{appendix}


\begin{landscape}
{\tiny
$$
\hskip-1.1cm
\begin{array}{|c|cccccccccccccccccc||}
\hline
&&&&&&&&\leftarrow&r-s&\rightarrow&&&&&&&&
\\
\hline
r+s &-8&-7& -6&-5&-4&-3&-2&-1&0&1&2&3&4&5&6&7&8&
\\
\hline
0&&&&&&&&&\mathbb R&&&&&&&&&
\\
1&&&&&&&&\circled{$\mathbb R^2$}&&\mathbb C&&&&&&&&
\\
2&&&&&&&\circled{$\mathbb R(2)$}&&\circled{$\mathbb R(2)$}&&\mathbb H&&&&&&&
\\
3&&&&&&\circled{$\mathbb C(2)$}&&\circled{$\mathbb R^2(2)$}&&\circled{$\mathbb C(2)$}&&\mathbb H^2&&&&&&
\\
4&&&&&\mathbb H(2)&&\circled{$\mathbb R(4)$}&&\circled{$\mathbb R(4)$}&&\mathbb H(2)&&\mathbb H(2)&&&&&
\\
5&&&&\circled{$\mathbb H^2(2)$}&&\mathbb C(4)&&\circled{$\mathbb R^2(4)$}&&\mathbb C(4)&&\circled{$\mathbb H^2(2)$}&&\mathbb C(4)&&&&
\\
6&&&\mathbb H(4)&&\mathbb H(4)&&\mathbb R(8)&&\mathbb R(8)&&\mathbb H(4)&&\mathbb H(4)&&\mathbb R(8)&&&
\\
&&&&&&&&&&&&&&&&&&
\\
7&&\mathbb C(8)&&\mathbb H^2(4)&&\mathbb C(8)&&\mathbb R^2(8)&&\mathbb C(8)&&\mathbb H^2(4)&&\mathbb C(8)&&\mathbb R^2(8)&&
\\
&&&&&&&&&&&&&&&&&&
\\
8&\mathbb R(16)&&\mathbb H(8)&&\mathbb H(8)&&\mathbb R(16)&&\mathbb R(16)&&\mathbb H(8)&&\mathbb H(8)&&\mathbb R(16)&&\mathbb R(16)&
\\
9&&\mathbb C(16)&&\circled{$\mathbb H^2(8)$}&&\mathbb C(16)&&\circled{$\mathbb R^2(16)$}&&\mathbb C(16)&&\circled{$\mathbb H^2(8)$}&&\mathbb C(16)&&\circled{$\mathbb R^2(16)$}&&
\\
10&\circled{$\mathbb R(32)$}&&\mathbb H(16)&&\mathbb H(16)&&\circled{$\mathbb R(32)$}&&\circled{$\mathbb R(32)$}&&\mathbb H(16)&&\mathbb H(16)&&\circled{$\mathbb R(32)$}&&\circled{$\mathbb R(32)$}&
\\
11&&\circled{$\mathbb C(32)$}&&\mathbb H^2(16)&&\circled{$\mathbb C(32)$}&&\circled{$\mathbb R^2(32)$}&&\circled{$\mathbb C(32)$}&&\mathbb H^2(16)&&\circled{$\mathbb C(32)$}&&\circled{$\mathbb R^2(32)$}&&
\\
\hline
\end{array}
$$
}
\end{landscape}


\end{document}